\newtheorem{thm}{Theorem}[section]
\newtheorem{lem}[thm]{Lemma}
\newtheorem{cor}[thm]{Corollary}
\newtheorem{prop}[thm]{Proposition}
\newtheorem{ex}[thm]{Example}
\newtheorem*{prob*}{Open problem}
\theoremstyle{definition}
\newtheorem{defi}[thm]{Definition}
\theoremstyle{remark}
\newtheorem{rem}[thm]{Remark}
\newtheorem*{rem*}{Remark}
\DeclareMathOperator{\id}{id}
\DeclareMathOperator{\rad}{rad}
\DeclareMathOperator{\Hom}{Hom}
\newcommand{\kringel}{\mathbin{\raise1pt\hbox{$\scriptstyle\circ$}}}
\newcommand{\pkt}{\mathbin{\raise0pt\hbox{$\scriptstyle\bullet$}}}
\newcommand{\ph}{\phantom{0000}}
\newcommand{\C}{\mathbb{C}}
\newcommand{\N}{\mathbb{N}}
\newcommand{\Z}{\mathbb{Z}}
\newcommand{\ad}{\mathop{\rm ad}}
\newcommand{\Der}{\mathop{\rm Der}}
\newcommand{\La}{\mathfrak{a}}
\newcommand{\Lb}{\mathfrak{b}}
\newcommand{\Ld}{\mathfrak{d}}
\newcommand{\Le}{\mathfrak{e}}
\newcommand{\Lf}{\mathfrak{f}}
\newcommand{\Lg}{\mathfrak{g}}
\newcommand{\Lh}{\mathfrak{h}}
\newcommand{\Ll}{\mathfrak{l}}
\newcommand{\Ln}{\mathfrak{n}}
\newcommand{\Lm}{\mathfrak{m}}
\newcommand{\Lp}{\mathfrak{p}}
\newcommand{\Lq}{\mathfrak{q}}
\newcommand{\Lr}{\mathfrak{r}}
\newcommand{\Ls}{\mathfrak{s}}
\newcommand{\Lt}{\mathfrak{t}}
\newcommand{\CL}{\mathcal{L}}
\newcommand{\al}{\alpha}
\newcommand{\be}{\beta}
\newcommand{\de}{\delta}
\newcommand{\la}{\lambda}
\newcommand{\ov}{\overline}
\newcommand{\ra}{\rightarrow}
\renewcommand{\phi}{\varphi}
\begin{document}

\title[Faithful modules]{Faithful Lie algebra modules and quotients of the universal
enveloping algebra}

\author[D. Burde]{Dietrich Burde}
\author[W. Moens]{Wolfgang Alexander Moens}
\address{Fakult\"at f\"ur Mathematik\\
Universit\"at Wien\\
  Nordbergstrasse 15\\
  1090 Wien \\
  Austria}
\email{dietrich.burde@univie.ac.at}
\address{Fakult\"at f\"ur Mathematik\\
Universit\"at Wien\\
  Nordbergstrasse 15\\
  1090 Wien \\
  Austria}
\email{wolfgang.moens@univie.ac.at}
\date{\today}

\subjclass[2000]{Primary 17B10, 17B25}
\thanks{The authors were supported by the FWF, Projekt P21683. The second
author was also supported by a Junior Research Fellowship of the ESI, Vienna.}

\begin{abstract}
We describe a new method to determine faithful representations of small 
dimension for a finite dimensional nilpotent Lie algebra. We give various
applications of this method. In particular we find a new upper bound on the
mi\-ni\-nmal dimension of a faithful module for the Lie algebras being counter examples
to a well known conjecture of J. Milnor. 
\end{abstract}

\maketitle

\section{Introduction}

Let $\Lg$ be a finite-dimensional complex Lie algebra. Denote by $\mu(\Lg)$ the
minimal dimension of a faithful $\Lg$-module. This is an invariant of $\Lg$, which
is finite by Ado's theorem. Indeed, Ado's theorem asserts that there exists a faithful linear 
representation of finite dimension for $\Lg$. There are many reasons why it is interesting to
study $\mu(\Lg)$, and to find good upper bounds for it. One important motivation comes
from questions on fundamental groups of complete affine manifolds and 
left-invariant affine structures on Lie groups. A famous problem of Milnor in this area
is related to the question whether or not $\mu(\Lg)\le \dim (\Lg)+1$ holds for
all solvable Lie algebras. For the history of this problem, and the counter examples to it
see \cite{MIL}, \cite{BU5} and the references given therein. \\
It is also interesting to find new proofs and refinements for Ado's theorem. We want to mention
the work of Neretin \cite{NER}, who gave a proof of Ado's theorem, which appears to be more
natural than the classical ones. This gives also a new insight into upper bounds for arbitrary
Lie algebras. \\
From a computational view, it is also very interesting to construct faithful representations 
of small degree for a given nilpotent Lie algebra $\Lg$. In \cite{BEG} we have given various 
methods for such constructions. In this paper we present another method using
quotients of the universal enveloping algebra, which has many applications
and gives even better results than the previous constructions. 
We obtain new upper bounds on the invariant $\mu(\Lg)$ for complex filiform
nilpotent Lie algebras $\Lg$. In particular, we find new upper bounds on $\mu(\Lg)$ for
the counter examples to Milnor's conjecture in dimension $10$.   \\[0.2cm]
The paper is organized as follows. After some basic properties we give estimates on $\mu(\Lg)$
in terms of $\dim (\Lg)$ according to the structure of the solvable radical of $\Lg$.
In the third section we describe the new construction of faithful modules by quotients of the
universal enveloping algebra. We decompose the Lie algebra $\Lg$ as a 
semidirect product $\Lg=\Ld\ltimes \Ln$, for some ideal $\Ln$ and  a subalgebra $\Ld\subseteq \Der(\Ln)$, 
and then constructing faithful $\Ld\ltimes \Ln$-submodules of $U(\Ln)$.
This is illustrated with two easy examples. \\
In the fourth section we give some applications of this construction. First we prove a bound
on $\mu(\Lg)$ for an arbitrary Lie algebra $\Lg$ in terms of the $\dim (\Lg/\Ln)$ and
$\dim(\Lr)$, where $\Ln$ denotes the nilradical of $\Lg$, and $\Lr$ the solvable radical.
Then we apply the construction to show that $\mu(\Lg)\le \dim (\Lg)$ for all $2$-step nilpotent
Lie algebras. Finally we apply the method to obtain new estimates on $\mu(\Lg)$ for
filiform Lie algebras $\Lg$, in particular for $\dim(\Lg)=10$. As for the counter examples
to Milnor's conjecture in dimension $10$, we give an example in $\ref{4.11}$.
It is quite difficult to see that this Lie algebra satisfies $\mu(\Lf)\ge 12$, so that it does
not admit an affine structure, see \cite{BU5}. On the other hand, it was known that  $\mu(\Lf)\le 22$.
Our new method gives $\mu(\Lf)\le 18$, which is up to now the best known upper bound.

%%%%%%%%%%%%%%%%%%%%%%%%%%%%%%%%%%%%%%%%%%%%%%%%%%%%%%%%%%%%%%%%%%%%%%%%%%%%%%

\section{Definitions and basic properties}

All Lie algebras are assumed to be complex and finite-dimensional, if not stated
otherwise.
Denote by $c$ the nilpotency class of a nilpotent Lie algebra.

\begin{defi}
Let $\Lg$ be a Lie algebra. We denote by $\mu(\Lg)$
the minimal dimension of a faithful $\Lg$-module, and by $\widetilde{\mu}(\Lg)$ the
minimal dimension of a faithful nilpotent $\Lg$-module.
\end{defi}

Note that $\widetilde{\mu}(\Lg)$ is only well-defined, if $\Lg$ is nilpotent.
On the other hand, every nilpotent Lie algebra admits a faithful nilpotent $\Lg$-module
of finite dimension \cite{BIR}. Recall the following lemma from \cite{BUM}.

\begin{lem}\label{mono}
Let $\Lh$ be a subalgebra of $\Lg$. Then $\mu(\Lh)\le \mu(\Lg)$. Furthermore,
if $\La$ and $\Lb$ are two Lie algebras, then $\mu(\La\oplus \Lb)\le \mu(\La)+\mu(\Lb)$.
\end{lem}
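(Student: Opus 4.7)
The plan is to construct, in each case, a faithful module for the smaller algebra out of a faithful module for the larger one (or the summands), and then just read off the dimension.

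For the first assertion, I would start with a faithful $\Lg$-module $V$ of dimension $\mu(\Lg)$, and restrict the action to the subalgebra $\Lh\subseteq\Lg$. The key verification is that this restriction remains faithful: if some $h\in\Lh$ acts as zero on $V$, then $h$, viewed as an element of $\Lg$, also acts as zero on $V$, and faithfulness of the original $\Lg$-action forces $h=0$. Hence $V$ is a faithful $\Lh$-module of dimension $\mu(\Lg)$, which gives $\mu(\Lh)\le\mu(\Lg)$.

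For the second assertion, I would pick faithful modules $V$ for $\La$ and $W$ for $\Lb$ with $\dim V=\mu(\La)$ and $\dim W=\mu(\Lb)$. I would then endow the vector space $V\oplus W$ with the natural $\La\oplus\Lb$-action defined by $(a,b)\cdot(v,w):=(a\cdot v,\,b\cdot w)$. A quick check confirms that this respects the componentwise bracket of $\La\oplus\Lb$, so it really is a module action. Faithfulness is the point to verify: if $(a,b)$ annihilates every $(v,w)$, then $a$ annihilates $V$ and $b$ annihilates $W$, so $a=0$ and $b=0$ by the faithfulness of the two summand modules. Taking dimensions yields $\mu(\La\oplus\Lb)\le\dim V+\dim W=\mu(\La)+\mu(\Lb)$.

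Neither step presents a real obstacle; the whole argument is essentially the observation that faithfulness is preserved under restriction to a subalgebra and under forming external direct sums of modules over a direct sum of Lie algebras. The only small point worth flagging is checking that the componentwise action in the second part is compatible with the Lie bracket of $\La\oplus\Lb$, which relies on the fact that elements of $\La$ and $\Lb$ commute in $\La\oplus\Lb$ and hence act on disjoint summands of $V\oplus W$.
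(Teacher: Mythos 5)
Your argument is correct and is exactly the standard one: restriction of a faithful module to a subalgebra stays faithful, and the block-diagonal action on $V\oplus W$ gives a faithful module for $\La\oplus\Lb$ of dimension $\mu(\La)+\mu(\Lb)$. The paper itself gives no proof here (it only recalls the lemma from \cite{BUM}), so there is nothing to contrast with; your write-up fills that gap in the expected way.
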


\begin{defi}
Denote by $\Lb_m$ the subalgebra of $\Lg\Ll_m(\C)$ consisiting of all 
upper-triangular matrices, by $\Ln_m=[\Lb_m,\Lb_m]$ the subalgebra
of all strictly upper-triangular matrices, and by $\Lt_m$ the subalgebra
of diagonal matrices.
\end{defi}

The following result is in principle well known. However, it appears in different
formulations, e.g., compare with Theorem $2.2$ in \cite{CAR}.

\begin{prop}\label{weight}
Let $\Ln$ be a nilpotent Lie algebra and $\rho\colon \Ln\ra \Lg\Ll(V)$
be a linear representation of $\Ln$ of degree $m$. Then there exists a basis
of $V$ such that $\rho$ can be written as the sum of representations
$\rho=\de+\nu$, such that
\begin{itemize}
\item[(1)] $\de (\Ln)\subseteq \Lt_m$ and $\nu(\Ln)\subseteq \Ln_m$.
\item[(2)] $\de ([\Ln,\Ln])=0$, and $\de$ and $\nu$ commute.
\item[(3)] $[\rho(x),\rho(y)] = [\nu(x),\nu(y)]$ for all $x,y\in \Ln$.
\end{itemize}
\end{prop}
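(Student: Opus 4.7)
The plan is to apply the generalized weight space decomposition for the nilpotent Lie algebra $\Ln$ acting on $V$, and then to perform an abstract Jordan-type splitting diagonal-plus-strictly-upper-triangular on each weight space simultaneously.

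First, I would decompose $V$ into generalized weight spaces: $V=\bigoplus_{\lambda\in\Lambda} V_\lambda$, where $\Lambda$ is a finite set of linear functionals $\lambda\colon \Ln\to\C$ and $V_\lambda=\{v\in V\mid (\rho(x)-\lambda(x))^{m}v=0\text{ for all }x\in\Ln\}$. The existence and $\rho(\Ln)$-invariance of this decomposition is the standard generalization of the weight space decomposition from abelian to nilpotent Lie algebras, and I would invoke it as a known fact. A brief observation then shows that each $\lambda$ vanishes on $[\Ln,\Ln]$: the map $\rho_\lambda\colon x\mapsto \rho(x)|_{V_\lambda}-\lambda(x)\mathrm{id}$ defined on $V_\lambda$ satisfies $[\rho_\lambda(x),\rho_\lambda(y)]=[\rho(x),\rho(y)]|_{V_\lambda}=\rho([x,y])|_{V_\lambda}$, and comparing traces on $V_\lambda$ forces $\lambda([x,y])=0$.

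Next I would define the two candidate representations globally by
\[
\de(x)|_{V_\lambda} \,=\, \lambda(x)\,\mathrm{id}_{V_\lambda}, \qquad
\nu(x)|_{V_\lambda} \,=\, \rho(x)|_{V_\lambda}-\lambda(x)\,\mathrm{id}_{V_\lambda},
\]
so that $\rho=\de+\nu$ by construction. The map $\de$ is linear and a Lie-algebra homomorphism since each $\lambda$ is; it vanishes on $[\Ln,\Ln]$ by the previous step. Because $\de(x)$ acts as a scalar on each invariant subspace $V_\lambda$ it commutes with every $\nu(y)$ and with every $\de(y)$, so property (2) is immediate. Expanding $[\nu(x),\nu(y)]=[\rho(x)-\de(x),\rho(y)-\de(y)]$ using this commutation collapses all cross-terms and gives $[\nu(x),\nu(y)]=[\rho(x),\rho(y)]$, which is property (3); this simultaneously shows $\nu$ is a representation, since $[\rho(x),\rho(y)]=\rho([x,y])=\nu([x,y])$ as $\de$ kills $[\Ln,\Ln]$.

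Finally, to realise the diagonal/strictly-upper-triangular form, I would apply Engel's theorem to the image $\nu(\Ln)|_{V_\lambda}$ on each weight space: it consists of nilpotent operators closed under the bracket (isomorphic to a quotient of $\Ln$), so there exists a basis of $V_\lambda$ in which every $\nu(x)|_{V_\lambda}$ is strictly upper triangular. Concatenating these bases across all $\lambda\in\Lambda$ yields a basis of $V$ in which $\nu(\Ln)\subseteq\Ln_m$ and $\de(\Ln)\subseteq\Lt_m$, giving property (1). The only real subtlety is the existence of the generalized weight space decomposition for a nilpotent (non-abelian) Lie algebra together with the vanishing $\lambda([\Ln,\Ln])=0$; once those are in hand, everything else is a direct verification.
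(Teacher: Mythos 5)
Your proposal is correct and follows essentially the same route as the paper: decompose $V$ into (generalized) weight spaces, let $\de(x)$ act as the scalar $\la(x)$ on each weight space, set $\nu=\rho-\de$, and pass to a basis in which $\nu(\Ln)$ is strictly upper triangular. You merely make explicit two points the paper leaves implicit, namely the trace argument showing each weight vanishes on $[\Ln,\Ln]$ and the appeal to Engel's theorem on each weight space.
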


\begin{proof}
By the weight space decomposition for modules of nilpotent Lie algebras we can
write 
\[
V=\bigoplus_{i=1}^s V^{\la_i}(\Ln),
\]
where $\la\in \Hom(\Ln,\C)$ are the different weights of $\rho$, and $V^{\la_i}(\Ln)$
are the weight spaces. In an appropriate basis of $V$ the operators $\rho(x)$ are given
by block matrices with blocks
\[
\begin{pmatrix}
\la_i(x) &  & \ast \\
         & \ddots & \\
0        &        &  \la_i(x)\\
\end{pmatrix}.
\]
Then let $\de(x)$ the diagonal part given by $\oplus_i \la_i(x)\id_{\mid V^{\la_i}}$, 
and put $\nu=\rho-\de$. Now it is easy to see that $\de$ and $\nu$ are representations. 
In fact, the $\la_i$ are characters, so that $\de ([\Ln,\Ln])=0$. Also, $\de$ commutes
with $\nu$, since it is a multiple of the identity on each block. This shows $(1)$ and $(2)$,
which in turn imply $(3)$.
\end{proof}

The next proposition gives an lower bound on $\mu(\Ln)$ in terms of the nilpotency class
of $\Ln$. As a special case we recover the well known estimate $n \le \mu(\Lf)$ for
a filiform Lie algebra $\Lf$ of dimension $n$.

\begin{prop}\label{2.5}
Let $\Ln$ be a nilpotent Lie algebra of class $c$ and dimension $n\ge 2$. Then 
we have $c + 1 \leq \mu(\Ln)$.
\end{prop}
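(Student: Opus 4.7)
The plan is to take an arbitrary faithful representation $\rho\colon \Ln \ra \Lg\Ll_m(\C)$ with $m = \mu(\Ln)$ and to show $m \geq c + 1$ by exhibiting a nonzero iterated bracket inside the Lie algebra $\Ln_m$ of strictly upper-triangular matrices.

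First I would apply Proposition \ref{weight} to decompose $\rho = \de + \nu$ in an appropriate basis of $V = \C^m$, with $\de(\Ln) \subseteq \Lt_m$ and $\nu\colon \Ln \ra \Ln_m$ both Lie algebra representations. The essential consequence is part $(2)$: since $\de$ vanishes on $[\Ln,\Ln]$, the maps $\rho$ and $\nu$ agree on the derived subalgebra. Assuming $c \geq 2$, I would then choose a nonzero $z \in \Ln^c \subseteq [\Ln,\Ln]$ written as a $(c-1)$-fold iterated bracket $z = [x_1, [x_2, [\cdots, [x_{c-1}, x_c] \cdots]]]$. By the above identification $\rho(z) = \nu(z)$, and since $\nu$ is a homomorphism, $\nu(z)$ equals the corresponding iterated bracket of the elements $\nu(x_i) \in \Ln_m$, so it lies in $\Ln_m^c$. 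Faithfulness of $\rho$ gives $\nu(z) \neq 0$, forcing $\Ln_m^c \neq 0$. Since $\Ln_m$ is nilpotent of class exactly $m-1$ (the term $\Ln_m^k$ being spanned by the matrix units $E_{ij}$ with $j - i \geq k$), this yields $c \leq m-1$, i.e., $m \geq c+1$.

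The remaining case $c = 1$, where $\Ln$ is abelian of dimension $\geq 2$, I would handle separately: any $1$-dimensional representation is a character whose kernel has codimension at most $1$, so it cannot be faithful once $\dim \Ln \geq 2$, yielding $\mu(\Ln) \geq 2 = c+1$. The main obstacle is conceptual, namely transferring the bracket structure from $\rho$ to its strictly upper-triangular part $\nu$; this is exactly what Proposition \ref{weight}$(2)$ accomplishes by forcing the diagonal part to vanish on all commutators. The determination of the nilpotency class of $\Ln_m$ is routine.
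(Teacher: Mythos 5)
Your proof is correct and takes essentially the same route as the paper: both decompose $\rho=\de+\nu$ via Proposition~\ref{weight}, transfer the bracket structure to the strictly upper-triangular part $\nu(\Ln)\subseteq\Ln_m$ (you via a nonzero iterated bracket in $\Ln^c$, the paper via the identity $[\rho(x),\rho(y)]=[\nu(x),\nu(y)]$), and conclude $c\le m-1$ from the nilpotency class of $\Ln_m$. Your elementary handling of the abelian case replaces the paper's citation of the bound $\mu(\Ln)\ge\lceil 2\sqrt{n-1}\rceil$, but this is an inessential difference.
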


\begin{proof}
If $\Ln$ is abelian, then $\mu(\Ln)\ge \lceil 2\sqrt{n-1}\rceil \ge 2=c+1$ by proposition
$2.4$ of \cite{BUM}.
Assume now that $\Ln$ is not abelian. Consider a faithful representation 
$\rho\colon \Ln\hookrightarrow \Lg\Ll(V)$ of degree $m$. 
Let $\rho = \de + \nu$ be a decomposition according to proposition $\ref{weight}$.
Then $[\rho(x),\rho(y)] = [ \nu(x),\nu(y)]$ for all 
$x,y\in \Ln$. Hence the non-trivial nilpotent Lie algebras $\rho(\Ln)$ and $\nu(\Ln)$ have the 
same nilpotency class $c$. 
Since the nilpotency class of $\Ln_m$ is $m-1$, and $\nu(\Ln)\subseteq \Lb$, it follows
$c\le m-1$. If we take $\rho$ to be of minimal degree, we obtain $c + 1 \leq \mu(\Ln)$.
\end{proof}

\begin{cor}
Let $\Lf$ be a filiform nilpotent Lie algebra of dimension $n$. Then
$n \le \mu(\Lf)$.
\end{cor}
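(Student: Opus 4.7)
The plan is to invoke Proposition \ref{2.5} directly, after recording the only fact about filiform algebras that is needed: by definition, a filiform nilpotent Lie algebra $\Lf$ of dimension $n$ has maximal nilpotency class, namely $c = n-1$. (Equivalently, the lower central series satisfies $\dim \Lf^i = n - i$ for $1 \le i \le n$, forcing $\Lf^{n-1}\neq 0$ and $\Lf^n = 0$.)

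With this in hand the corollary is immediate: if $n = 1$ then $\Lf$ is abelian of dimension $1$ and the inequality $1 \le \mu(\Lf)$ is trivial, while if $n \ge 2$ then Proposition \ref{2.5} yields
\[
  n \;=\; c+1 \;\le\; \mu(\Lf).
\]

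I do not expect any obstacle. The only thing to be slightly careful about is the degenerate case $n=1$, which is outside the hypothesis $n \ge 2$ of Proposition \ref{2.5} but is handled trivially. No further estimates, no decompositions, and no computations with $U(\Ln)$ are required; this is a pure specialization of the preceding proposition to the extremal case where the nilpotency class equals $\dim(\Lf)-1$.
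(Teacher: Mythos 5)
Your proof is correct and matches the paper's intended argument exactly: the corollary is stated as an immediate specialization of Proposition \ref{2.5} to the case $c = n-1$, which is precisely what you do. Your extra care with the degenerate case $n=1$ is harmless and does not change the substance.
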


There has been some interest lately in determining $\widetilde{\mu}(\Ln)$ for
nilpotent Lie algebras $\Ln$. We find that $\widetilde{\mu}(\Ln)$ coincides
with $\mu(\Ln)$ for a broad class of nilpotent Lie algebras.

\begin{lem}
Let $\Ln$ be a nilpotent Lie algebra satisfying $Z(\Ln) \subseteq [ \Ln , \Ln ]$. 
Consider a linear representation $\rho$ of $\Ln$ with above
decomposition $\rho = \de + \nu$. Then $\rho$ is faithful if and only if $\nu$ is.
\end{lem}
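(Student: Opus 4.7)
The plan is to use both Proposition \ref{weight} and the hypothesis $Z(\Ln)\subseteq[\Ln,\Ln]$ in a symmetric way, with the harder direction being the implication $\rho\text{ faithful}\Rightarrow\nu\text{ faithful}$.

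First I would dispose of the easy direction. Assume $\nu$ is faithful and suppose $\rho(x)=0$. By part (1) of Proposition \ref{weight}, $\de(x)$ is diagonal and $\nu(x)$ is strictly upper triangular, so from $\de(x)+\nu(x)=0$ one reads off $\de(x)=\nu(x)=0$. Faithfulness of $\nu$ then gives $x=0$.

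For the converse, I assume $\rho$ is faithful and that $\nu(x)=0$ for some $x\in\Ln$. The goal is to show $x=0$. Part (3) of Proposition \ref{weight} yields $[\rho(x),\rho(y)]=[\nu(x),\nu(y)]=0$ for every $y\in\Ln$, so $\rho(x)$ lies in the centre of $\rho(\Ln)$. Because $\rho$ is injective, this is equivalent to $x\in Z(\Ln)$.

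At this point the hypothesis $Z(\Ln)\subseteq[\Ln,\Ln]$ enters: it places $x$ in the commutator subalgebra, and part (2) of Proposition \ref{weight} tells us that $\de$ vanishes on $[\Ln,\Ln]$. Hence $\de(x)=0$, and combined with $\nu(x)=0$ we get $\rho(x)=0$. Faithfulness of $\rho$ finishes the argument. The only real content is noticing that the centrality of $\rho(x)$ in $\rho(\Ln)$ follows from the commutator identity in part (3), rather than requiring any further assumption; the hypothesis $Z(\Ln)\subseteq[\Ln,\Ln]$ is exactly what is needed to kill the diagonal part once $x$ is forced to be central, so no computational obstacle arises.
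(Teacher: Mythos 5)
Your proof is correct, but it takes a slightly different route from the paper's. The paper proves both directions at once by invoking the standard fact that a representation of a nilpotent Lie algebra is faithful if and only if the center $Z(\Ln)$ acts faithfully (since the kernel is an ideal and every nonzero ideal of a nilpotent Lie algebra meets the center), and then simply observes that $\rho$ and $\nu$ agree on $[\Ln,\Ln]\supseteq Z(\Ln)$ because $\de$ kills the commutator; this settles both implications simultaneously in two lines. You instead argue each implication directly: for $\nu$ faithful $\Rightarrow$ $\rho$ faithful you use the triangular decomposition of part (1) (diagonal plus strictly upper triangular equals zero forces both to vanish), which in fact shows $\ker\rho\subseteq\ker\nu$ with no hypothesis on the center at all; and for the converse you essentially re-derive the relevant half of the ``faithful on the center'' criterion from part (3), locating $\ker\nu$ inside $Z(\Ln)$ and then using $Z(\Ln)\subseteq[\Ln,\Ln]$ together with part (2) to kill $\de$. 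Both arguments are sound and use the same underlying mechanism ($\de$ vanishing on the commutator); yours is more self-contained and makes transparent that the hypothesis is needed only for one direction, while the paper's is shorter and more uniform by citing the general criterion.
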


\begin{proof}
A representation of a nilpotent Lie algebra $\Ln$ is faithful if and only if 
the center $Z(\Ln)$ acts faithfully. Since $\rho(x)=\nu(x)$ for all $x,y\in [\Ln,\Ln]$,
and $Z(\Ln) \subseteq [ \Ln , \Ln ]$, $\rho$ and $\nu$ coincide on $Z(\Ln)$.
Hence the center acts faithfully by $\rho$ if and only if it acts faithfully by $\nu$.
\end{proof}

\begin{cor}
Let $\Ln$ be a nilpotent Lie algebra satisfying $Z(\Ln) \subseteq [\Ln ,\Ln ]$. 
Then $\mu(\Ln)=\widetilde{\mu}(\Ln)$.
\end{cor}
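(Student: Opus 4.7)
The plan is to prove the two inequalities $\widetilde{\mu}(\Ln) \le \mu(\Ln)$ and $\mu(\Ln) \le \widetilde{\mu}(\Ln)$ separately. The second inequality is immediate: any faithful nilpotent module is in particular a faithful module, so the minimum over a smaller class of modules is at least as large.

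For the nontrivial direction, I would start from a faithful representation $\rho\colon \Ln \hookrightarrow \Lg\Ll(V)$ of minimal dimension $m = \mu(\Ln)$. The key is to pass to a faithful \emph{nilpotent} module of the same dimension. For this I apply Proposition \ref{weight} to decompose $\rho = \de + \nu$ in a suitable basis, where $\nu(\Ln) \subseteq \Ln_m$ is strictly upper-triangular (so $\nu$ is nilpotent as a representation). The preceding lemma, which uses the hypothesis $Z(\Ln) \subseteq [\Ln,\Ln]$, then tells me that $\nu$ is faithful if and only if $\rho$ is. Since $\rho$ is faithful, so is $\nu$, giving a faithful nilpotent representation of $\Ln$ of dimension $m$. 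Therefore $\widetilde{\mu}(\Ln) \le m = \mu(\Ln)$.

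Combining the two inequalities yields the desired equality. The only subtle point is that the lemma just above already packages the work: the hypothesis $Z(\Ln) \subseteq [\Ln,\Ln]$ ensures that $\de$ vanishes on $Z(\Ln)$ (since $\de([\Ln,\Ln])=0$), so the kernel of $\rho$ restricted to $Z(\Ln)$ equals the kernel of $\nu$ restricted to $Z(\Ln)$, and faithfulness of a representation of a nilpotent Lie algebra is detected on the center. No further obstacle arises; the proof is essentially a one-line combination of the preceding lemma with the trivial inequality.
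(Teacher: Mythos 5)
Your proposal is correct and matches the paper's intent exactly: the corollary is an immediate consequence of the preceding lemma (which guarantees that the nilpotent part $\nu$ of the decomposition $\rho=\de+\nu$ from Proposition \ref{weight} is faithful whenever $\rho$ is, under the hypothesis $Z(\Ln)\subseteq[\Ln,\Ln]$), combined with the trivial inequality $\mu(\Ln)\le\widetilde{\mu}(\Ln)$. The paper leaves this deduction implicit, and you have supplied precisely the two inequalities it relies on.
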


\begin{rem}
The condition $Z(\Ln) \subseteq [\Ln ,\Ln ]$ on nilpotent Lie algebras $\Ln$ is not
too restrictive. In fact, $\Ln$ always splits as $\C^{\ell}\oplus \Lm$ with
$Z(\Lm)\subseteq [\Lm,\Lm]$.
In particular, if the center is $1$-dimensional, or if $\Ln$ is indecomposable,
the condition is satisfied. This includes $\Ln$ being filiform nilpotent.
\end{rem}

%\begin{prop}\label{2.10}
%Let $\Lg$ be a solvable Lie algebra of solvability class $s\ge 1$.
%Then we have $2^{s - 2} \leq \mu(\Lg)$. 
%\end{prop}

%\begin{proof}
%Let $\rho\colon \Lg\ra \Lg\Ll (V)$ be a faithful representation of degree $m$.
%By Lie's theorem we may assume that $\rho(\Lg)\subseteq \Lb_m$. This implies,
%that $s$ is less or equal to the solvability class of $\Lb_m$, i.e.,
%$s\le \lceil \log_2 (m)\rceil +1$. We obtain $2^s\le 2^2m$, hence $2^{s-2}\le m$.
%Now choose $m$ to be minimal.
%\end{proof}

%\begin{rem}
%If $\Lg$ is nilpotent, of class $c$ and solvability class $s$, then 
%the estimates of proposition $\ref{2.5}$ and $\ref{2.10}$ are related as follows:
%\[
%2^{s-2}\le c+1 \le \mu(\Lg).
%\]
%\end{rem}

We are also interested in estimating $\mu(\Lg)$ in terms of $\dim (\Lg)$.
We present results which depend on the structure of the solvable radical of $\Lg$.
A first result is the following.

\begin{lem}
For any Lie algebra $\Lg$ we have $\sqrt{\dim (\Lg)}\le \mu(\Lg)$.
\end{lem}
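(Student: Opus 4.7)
The plan is very short: a faithful $\Lg$-module $V$ of minimal dimension $m=\mu(\Lg)$ gives an injective Lie algebra homomorphism $\rho \colon \Lg \hookrightarrow \Lg\Ll(V) \cong \Lg\Ll_m(\C)$. Since $\rho$ is injective, we get the dimension inequality
\[
\dim(\Lg) \le \dim \Lg\Ll_m(\C) = m^2 = \mu(\Lg)^2.
\]
Taking square roots yields the claim $\sqrt{\dim(\Lg)} \le \mu(\Lg)$.

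There is no real obstacle here; the bound simply reflects the fact that $\Lg$ must embed as a subspace (indeed, as a subalgebra) of the $m^2$-dimensional matrix algebra $\Lg\Ll_m(\C)$. The only thing to mention is that $\mu(\Lg)$ is finite, which is guaranteed by Ado's theorem (already invoked in the introduction).
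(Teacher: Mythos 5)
Your argument is correct and is essentially identical to the paper's own proof: a minimal faithful module gives an embedding $\Lg\hookrightarrow\Lg\Ll_m(\C)$ with $m=\mu(\Lg)$, hence $\dim(\Lg)\le m^2$. Nothing is missing.
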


\begin{proof}
Suppose that $\Lg$ can be embedded into some $\Lg\Ll_m(\C)$, then
\[
\dim (\Lg)\le \dim (\Lg\Ll_m(\C))=m^2. 
\]
In particular this holds for $m=\mu(\Lg)$.
\end{proof}

\begin{lem}\label{2.2}
Let $\Lg$ be represented as $\Lb\ltimes_{\de}\La$ for a Lie algebra $\Lb$ and an abelian Lie
algebra $\La$, such that the homomorphism $\de\colon \Lb\ra \Lg\Ll(\La)$ is faithful.
Then we have 
\[
\mu(\Lg)\le \dim (\La)+1.
\]
\end{lem}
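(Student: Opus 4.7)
The plan is to exhibit an explicit faithful $\Lg$-module of dimension $\dim(\La)+1$, built as the standard affine-type extension of $\de$. Set $V = \La \oplus \C e_0$ and define $\rho\colon \Lg\ra \Lg\Ll(V)$ on the generators by letting $b\in \Lb$ act on $\La$ via $\de(b)$ and by $0$ on $\C e_0$, while $a\in \La$ acts by the rank-one ``translation'' $\rho(a)(v+te_0) = ta$.

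First I would verify that $\rho$ is a Lie algebra homomorphism. The only nontrivial relation to check is $[b,a]_{\Lg} = \de(b)a$ for $b\in \Lb$, $a\in \La$. Computing: $\rho(b)\rho(a)(v+te_0) = \rho(b)(ta) = t\,\de(b)a$, while $\rho(a)\rho(b)(v+te_0) = \rho(a)(\de(b)v) = 0$ because $\de(b)v$ has no $e_0$-component. Hence $[\rho(b),\rho(a)](v+te_0) = t\,\de(b)a = \rho(\de(b)a)(v+te_0)$, as required. The bracket of two elements of $\Lb$ is handled by $\de$ being a homomorphism (since the second summand is annihilated anyway), and the bracket of two elements of $\La$ vanishes because $\rho(\La)$ maps $V$ into $\La$ and $\La$ into $0$, so $\rho(\La)^2 = 0$, matching the hypothesis that $\La$ is abelian.

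Second I would check faithfulness. Any $x\in \Lg$ has a unique decomposition $x = b + a$ with $b\in \Lb$, $a\in \La$, and $\rho(x)(v+te_0) = \de(b)v + ta$. If $\rho(x) = 0$, then setting $t = 0$ forces $\de(b)v = 0$ for every $v\in \La$, so $\de(b) = 0$ and hence $b = 0$ by the assumed faithfulness of $\de$; setting $v = 0$, $t = 1$ then forces $a = 0$. Thus $\ker \rho = 0$ and $\mu(\Lg) \le \dim V = \dim(\La)+1$.

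I do not expect a real obstacle here; the lemma just repackages the classical affine representation used to realize semidirect products with an abelian ideal. The only point where mild care is needed is matching the sign convention in $\Lb\ltimes_{\de}\La$ so that the computed commutator $[\rho(b),\rho(a)]$ reproduces $\rho(\de(b)a)$ exactly rather than its negative.
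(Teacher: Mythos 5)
Your proof is correct and is essentially the paper's own argument: the module $V=\La\oplus\C e_0$ with $b$ acting by $\de(b)$ and $a$ acting by the rank-one translation is exactly the embedding $(b,a)\mapsto(\de(b),a)$ into $\Aff(\La)=\Lg\Ll_r(\C)\ltimes_{\id}\C^r\subseteq\Lg\Ll_{r+1}(\C)$ that the paper uses, with the homomorphism and faithfulness checks written out explicitly rather than asserted as obvious.
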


\begin{proof}
Let $\dim(\La)=r$ and $\La\Lf\Lf (\La)=\Lg\Ll_r(\C)\ltimes_{\id} \C^r \subseteq \Lg\Ll_{r+1}(\C)$ 
be the Lie algebra of affine transformations of $\La=\C^r$. Define
\[
\phi\colon \Lb\ltimes_{\de}\La \ra \La\Lf\Lf (\La),\quad (b,a)\mapsto (\de (b),a).
\]
Then it is obvious that $\phi$ is faithful if and only if $\de$ is faithful.
Moreover the degree of the representation is $r+1$.
\end{proof}

Denote by $\rad(\Lg)$ the solvable radical of $\Lg$.

\begin{prop}\label{2.13}
Let $\Lg$ be a Lie algebra such that $\rad(\Lg)$ is abelian. Then we have
\[
\mu(\Lg)\le \dim (\Lg),
\]
and the only Lie algebras which satisfy equality are
the abelian Lie algebras of dimension $n\le 4$ and the Lie algebras
$\Le_8\oplus \cdots \oplus \Le_8$.
\end{prop}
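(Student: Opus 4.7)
The plan is first to obtain the upper bound $\mu(\Lg) \le \dim(\Lg)$ by combining the Levi decomposition of $\Lg$ with Lemma \ref{2.2}, and then to analyse when equality can hold by reducing to the case where $\Lg$ is a direct sum of a semisimple and an abelian ideal.

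Let $\Lg = \Ls \ltimes \rad(\Lg)$ be a Levi decomposition with $\Ls$ semisimple, and consider the induced homomorphism $\de \colon \Ls \to \End(\rad(\Lg))$, where $\Der(\rad(\Lg)) = \End(\rad(\Lg))$ because $\rad(\Lg)$ is abelian. Its kernel $\Li = \ker(\de)$ is an ideal of $\Ls$, so by semisimplicity $\Ls = \Li \oplus \Ls'$ with $\Ls'$ acting faithfully on $\rad(\Lg)$. Since $\Li$ commutes with both $\Ls'$ and $\rad(\Lg)$, the algebra $\Lg$ decomposes as a direct sum of ideals $\Lg = \Li \oplus (\Ls' \ltimes \rad(\Lg))$. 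Combining Lemma \ref{mono} with Lemma \ref{2.2} I obtain
\[
\mu(\Lg) \le \mu(\Li) + \mu(\Ls' \ltimes \rad(\Lg)) \le \dim(\Li) + \dim(\rad(\Lg)) + 1,
\]
where $\mu(\Li) \le \dim(\Li)$ since the adjoint representation of a semisimple algebra is faithful. This rewrites as $\mu(\Lg) \le \dim(\Lg) - \dim(\Ls') + 1$, and since any nonzero semisimple Lie algebra has dimension at least $3$, it follows that $\mu(\Lg) \le \dim(\Lg)$, with strict inequality whenever $\Ls' \ne 0$.

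The previous step forces $\Ls' = 0$ in the equality case, so $\Lg = \Ls \oplus \La$ splits as a direct sum of the semisimple $\Ls$ and the abelian ideal $\La = \rad(\Lg)$. I would distinguish three subcases. If $\Ls \ne 0$ and $\La \ne 0$, take a minimal faithful $\Ls$-module $W$ of dimension $\mu(\Ls)$, twist it by a nonzero character $\chi_1 \in \La^{*}$, and append $\dim(\La)-1$ trivial $\Ls$-modules carrying characters $\chi_2, \ldots, \chi_{\dim(\La)}$ chosen so that $\chi_1, \ldots, \chi_{\dim(\La)}$ span $\La^{*}$; the resulting module is faithful of dimension $\mu(\Ls) + \dim(\La) - 1 \le \dim(\Lg)-1$, so equality fails. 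If $\Ls = 0$, then $\Lg$ is abelian and Birkhoff's formula $\mu(\C^n) = \lceil 2\sqrt{n-1}\rceil$ for $n \ge 2$ gives equality $\mu(\Lg) = n$ precisely when $n \le 4$. If $\La = 0$, then $\Lg$ is semisimple; writing $\Lg = \Ls_1 \oplus \cdots \oplus \Ls_k$ as a sum of simple ideals, Lemma \ref{mono} yields $\mu(\Lg) \le \sum_i \mu(\Ls_i) \le \sum_i \dim(\Ls_i) = \dim(\Lg)$, and equality in the right inequality forces $\mu(\Ls_i) = \dim(\Ls_i)$ for every $i$; among simple complex Lie algebras this is known to hold only for $\Le_8$, since all other simples admit a nontrivial faithful representation of dimension strictly less than their own.

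The main obstacle that remains is the matching lower bound $\mu(\Le_8 \oplus \cdots \oplus \Le_8) \ge 248 k$ for $k$ summands, which is what guarantees that equality genuinely occurs on these algebras. For this I would take a faithful module $V$ for $\Le_8^{\oplus k}$, decompose it into irreducibles $V = \bigoplus_\alpha V_\alpha$ with $V_\alpha = V_\alpha^{(1)} \boxtimes \cdots \boxtimes V_\alpha^{(k)}$ an external tensor product, and set $S_\alpha = \{i : V_\alpha^{(i)} \text{ is nontrivial}\}$. Faithfulness on the $i$-th factor forces $i \in S_\alpha$ for at least one $\alpha$, so $\bigcup_\alpha S_\alpha = \{1, \ldots, k\}$. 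Since the smallest nontrivial irreducible representation of $\Le_8$ is the adjoint of dimension $248$, each summand satisfies $\dim V_\alpha \ge 248^{\abs{S_\alpha}} \ge 248 \cdot \abs{S_\alpha}$, and summing gives $\dim V \ge 248 \sum_\alpha \abs{S_\alpha} \ge 248 k$, completing the equality analysis.
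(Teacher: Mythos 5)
Your proof is correct, and for the main inequality it follows exactly the paper's route: Levi decomposition, splitting $\Ls=\ker(\de)\oplus\Ls'$ with $\Ls'$ acting faithfully on the abelian radical, and the affine embedding of Lemma \ref{2.2}, giving $\mu(\Lg)\le\dim(\Lg)-\dim(\Ls')+1\le\dim(\Lg)-2$ whenever $\Lg$ is not reductive. Where you diverge is the equality analysis: the paper outsources three facts to \cite{BUM} --- that $\mu(\Ls\oplus\C)=\mu(\Ls)$, that $\mu$ is additive over simple summands of a semisimple algebra, and that $\Le_8$ is the unique simple algebra with $\mu=\dim$ --- whereas you reprove the first two. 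Your character-twisted module on $\Ls\oplus\La$ is a clean direct construction of a faithful module of dimension $\mu(\Ls)+\dim(\La)-1$ (the trace-zero argument showing that $\rho(s)=-\chi_1(a)\id_W$ forces $s=a=0$ is exactly the point that makes it work), and your tensor-decomposition argument $\dim V\ge 248\sum_\alpha\abs{S_\alpha}\ge 248k$ supplies the lower bound $\mu(\Le_8^{\oplus k})\ge 248k$ that the paper implicitly relies on via the additivity formula. The net effect is a more self-contained proof at the cost of length; the only external inputs you still need are Birkhoff's formula $\mu(\C^n)=\lceil 2\sqrt{n-1}\rceil$ and the fact that the adjoint representation of $\Le_8$ is its smallest nontrivial irreducible, both standard.
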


\begin{proof}
The claim is clear for simple and abelian Lie algebras, see \cite{BUM}.
Since the $\mu$-invariant is subadditive, it also follows for reductive
Lie algebras. Now suppose that $\Lg$ is not reductive. Then we can even show
that $\mu(\Lg)\le \dim (\Lg)-2$. Let $\La=\rad(\Lg)$, and
$\Ls\ltimes_{\de}\La$ be a Levi decomposition,
where the homomorphism $\de\colon \Ls\ra \Lg\Ll(\La)$ is given by $\de(x)=\ad (x)_{\mid \La}$.
Since $\Ls$ is semisimple we can choose an ideal $\Ls'$ in $\Ls$ such that
$\Ls=\ker(\de)\oplus \Ls'$ and $\Lg= \ker(\de)\oplus (\Ls'\ltimes_{\de'} \La)$, where
$\de'=\de_{\mid \Ls'}$. Note that $\de'\colon \Ls'\ra \Lg\Ll (\La)$ is faithful. 
Now $\Ls'$ is non-trivial, since otherwise $\Lg=\ker(\de)\oplus \La$ would be reductive.
This implies $\dim (\Ls')\ge 3$ and $\dim (\ker(\de)) = \dim (\Ls)-\dim (\Ls')\le \dim (\Ls)-3$.
Since $\ker(\de)$ is semisimple, and by lemma $\ref{2.2}$ we obtain
\begin{align*}
\mu(\Lg) & \le \mu (\ker(\de))+\mu(\Ls'\ltimes_{\de'}\La)\\
         & \le \dim (\ker(\de))+\dim(\La)+1 \\
         & \le \dim(\Ls)-3+\dim(\La)+1 \\
         & = \dim (\Lg)-2.
\end{align*}
Finally we assume that $\mu(\Lg)=\dim (\Lg)$. By the above inequality, $\Lg$ needs to
be reductive. If $\Lg$ is simple, then only $\Lg=\Le_8$ satisfies the condition, see
\cite{BUM}. For a semisimple Lie algebra $\Ls=\Ls_1\oplus \cdots \oplus \Ls_{\ell}$
we have $\mu(\Ls)=\sum_i \mu(\Ls_i)$ and $\mu(\Ls_i)\le \dim (\Ls_i)$.
This implies that the only semisimple Lie algebras $\Ls$ satisfying $\mu(\Ls)=\dim(\Ls)$ 
are direct sums of $\Le_8$.
Also, the only abelian Lie algebras satisfying the condition are
the ones of dimension $n\le 4$. On the other hand, any reductive Lie algebra $\Lg$ satisfying
$\mu(\Lg)=\dim (\Lg)$ must be either semisimple or abelian:
if $\Lg=\Ls\oplus \C^{\ell+1}$ with $\ell \ge 0$ and a non-trivial
semisimple Lie algebra $\Ls$, then $\mu(\Ls\oplus\C)=\mu(\Ls)$, see \cite{BUM}, and
\begin{align*}
\mu(\Lg) & \le \mu(\Ls\oplus\C) +\mu(\C^{\ell}) \\
         & \le \mu(\Ls)+\ell \\
         & \le \dim (\Ls)+\ell \\
         & \le \dim (\Lg)-1.
\end{align*}
This is a contradiction, and we are done.
\end{proof}

Our next result is that $\mu(\Lg) \leq \dim(\Lg) + 1$ for any Lie algebra with $\rad(\Lg)$ abelian
or $2$-step nilpotent. We need the following two lemmas.

\begin{lem}\label{2.14}
Let $\Lg$ be a nilpotent Lie algebra and $D$ a derivation of $\Lg$ that induces an isomorphism 
on the center. Then $\mu(\Lg) \leq \dim(\Lg) + 1$.
\end{lem}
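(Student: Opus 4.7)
The plan is to construct an explicit faithful representation of $\Lg$ on a vector space of dimension $\dim(\Lg)+1$. Let $V = \Lg \oplus \C v_0$, where $v_0$ is an additional basis vector, and define $\rho \colon \Lg \to \Lg\Ll(V)$ by
\[
\rho(x) \cdot y = [x,y] \quad (y \in \Lg), \qquad \rho(x) \cdot v_0 = D(x).
\]
In other words, $\rho$ is the adjoint representation of $\Lg$ on itself, extended by sending $v_0$ to a copy of $D$.

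First I would check that $\rho$ is a Lie algebra homomorphism. Restricted to the subspace $\Lg \subseteq V$, this is the adjoint representation, and the identity $[\rho(x),\rho(y)] = \rho([x,y])$ on $\Lg$ is just the Jacobi identity. On the extra vector $v_0$, one computes
\[
[\rho(x),\rho(y)](v_0) = \rho(x)D(y) - \rho(y)D(x) = [x,D(y)] + [D(x),y],
\]
which equals $D([x,y]) = \rho([x,y])(v_0)$ precisely because $D$ is a derivation. So $\rho$ is a representation.

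Next I would verify faithfulness. Suppose $\rho(x) = 0$. Then $[x,y] = 0$ for every $y \in \Lg$, so $x \in Z(\Lg)$, and moreover $D(x) = \rho(x) \cdot v_0 = 0$. Thus $x \in Z(\Lg) \cap \ker(D)$; but by hypothesis $D$ restricts to an isomorphism on $Z(\Lg)$, so this intersection is trivial and $x = 0$. Since $\dim V = \dim(\Lg) + 1$, this gives $\mu(\Lg) \leq \dim(\Lg) + 1$.

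There is no real obstacle here: once one thinks of packaging together the adjoint action (which sees everything except the center) with the derivation $D$ (which sees the center faithfully), the construction and verification are immediate from the definition of a derivation and the Jacobi identity.
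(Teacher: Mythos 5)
Your proof is correct and is essentially the paper's argument in explicit form: your space $V=\Lg\oplus\C v_0$ is the underlying vector space of the semidirect product $\C D\ltimes\Lg$ used in the paper, and your $\rho$ is (up to a sign on $v_0$) the restriction to $\Lg$ of its adjoint representation, with your faithfulness check being exactly the verification that no nonzero element of $\Lg$ is central in that extension.
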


\begin{proof}
The center $Z(\Lg)$ is a nonzero characteristic ideal of $\Lg$, such that $D(Z(\Lg))\subseteq Z(\Lg)$.
Denote by $\Ld$ the $1$-dimensional Lie algebra generated by $D$, and form the split
extension $\Ld\ltimes \Lg$. By assumption this is a Lie algebra of dimension  $\dim(\Lg) + 1$ with
trivial center. Hence its adjoint representation $\ad \colon \Ld\ltimes \Lg \ra \Lg\Ll(\Ld\ltimes \Lg)$
is faithful. Together with the embedding $\Lg \hookrightarrow \Ld\ltimes \Lg$ we obtain
a faithful representation of $\Lg$ of degree $\dim(\Lg) + 1$. 
\end{proof}

\begin{lem} \label{2.15}
Let $\Lg$ be a Lie algebra with  Levi-decomposition
$\Lg = \Ls \ltimes \Lr$, such that $\Ls \leq \Der(\Lr)$. 
Suppose $D$ is a derivation of the radical $\Lr$. Then the map 
$\pi\colon \Ls \ltimes \Lr \ra \Ls \ltimes \Lr$ given by
$(X,t) \mapsto (0,D(t))$ is a derivation of $\Lg$ if and only if $[D,\Ls]=0$.
\end{lem}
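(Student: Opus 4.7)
The plan is to verify the derivation identity
\[
\pi([u,v]) = [\pi(u),v] + [u,\pi(v)]
\]
directly, with $u=(X_1,t_1)$ and $v=(X_2,t_2)$, and extract the condition $[D,\Ls]=0$ from the resulting relation. First I would record the bracket on $\Lg=\Ls\ltimes\Lr$ explicitly: since $\Ls$ acts on $\Lr$ by derivations, one has
\[
[(X_1,t_1),(X_2,t_2)] = \bigl([X_1,X_2],\, X_1(t_2)-X_2(t_1)+[t_1,t_2]\bigr).
\]

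Next I would observe that the $\Ls$-component of both sides of the derivation identity is automatically zero. Indeed, $\pi$ lands in $\{0\}\times\Lr$, so the $\Ls$-component of $\pi([u,v])$ vanishes; and since $\Lr$ is an ideal, each bracket $[\pi(u),v]$ and $[u,\pi(v)]$ also has vanishing $\Ls$-component. So the problem reduces to comparing the $\Lr$-components.

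The $\Lr$-component of the left-hand side is
\[
D\bigl(X_1(t_2)-X_2(t_1)+[t_1,t_2]\bigr),
\]
while the $\Lr$-component of the right-hand side works out to
\[
X_1(D(t_2)) - X_2(D(t_1)) + [D(t_1),t_2] + [t_1,D(t_2)].
\]
Here I would use that $D$ is a derivation of $\Lr$ to cancel the $D[t_1,t_2]$ term on the left against $[D(t_1),t_2]+[t_1,D(t_2)]$ on the right. After this cancellation, the derivation identity for $\pi$ is seen to be equivalent to
\[
[D,X_1](t_2) \;=\; [D,X_2](t_1) \quad \text{for all } X_1,X_2\in\Ls,\ t_1,t_2\in\Lr,
\]
where $[D,X_i]=D\circ X_i-X_i\circ D$ is the commutator in $\Der(\Lr)$.

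Finally I would deduce both implications from this identity. For the forward direction, specializing $X_2=0$ yields $[D,X_1](t_2)=0$ for all $X_1\in\Ls$ and $t_2\in\Lr$, hence $[D,\Ls]=0$. Conversely, if $[D,\Ls]=0$, then both sides of the identity vanish identically, so $\pi$ is a derivation. I do not expect any real obstacle here; the proof is a bookkeeping computation whose only subtle point is recognizing that the contribution from $D$ being a derivation of $\Lr$ exactly cancels the inner-bracket terms, leaving the commutator condition between $D$ and $\Ls$ as the sole remaining constraint.
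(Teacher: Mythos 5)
Your proposal is correct and follows essentially the same route as the paper's proof: expand the semidirect-product bracket, use that $D$ is a derivation of $\Lr$ to cancel the inner-bracket terms, and reduce the derivation identity to $[D,X](s)=0$ for all $X\in\Ls$, $s\in\Lr$. Your explicit specialization $X_2=0$ to extract $[D,\Ls]=0$ from the symmetric identity $[D,X_1](t_2)=[D,X_2](t_1)$ is a small but welcome clarification of a step the paper leaves implicit.
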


\begin{proof} Consider any pair $a=(X,t)$ and $b=(Y,s)$ of elements in $\Lg$. We need to show 
that $\pi([a,b]) = [\pi(a),b] + [a,\pi(b)]$. The commutator of $a$ and $b$ is given by 
$ [(X,t),(Y,s)] = ([X,Y] , X(s) - Y(t) + [t, s])$ so that
\begin{align*}
\pi( [(X,t),(Y,s)] ) &= (0, D([t,s]) + (D \circ X) (s) - (D \circ Y) (t)) \\
     &= (0,[D(t),s] + [s,D(t)] + (D \circ X) (s) - (D \circ Y) (t)).
\end{align*}
We have $\pi((X,t)) = (0,D(t))$ and $\pi((Y,s)) = (0,D(s))$, hence
\begin{align*} [\pi((X,t)),(Y,s)] + [(X,t),\pi((Y,s))] & = [(0,D(t)),(Y,s)] + [(X,t),D(s)] \\ 
&=  (0, [D(t),s] + [t,D(s)] + (X \circ D)(s) \\
& - (Y \circ D)(t)). 
\end{align*}

We see that $\pi$ is a derivation of $\Lg$ if and only if these two expressions coincide for 
all $X,Y \in \Ls$ and all $s,t \in \Ln$. This is the case iff $[D,X](s) = 0$ for all $X \in \Ls$ 
and all $s \in \Ln$. This finishes the proof. 
\end{proof}

\begin{prop} Let $\Lg$ be a Lie algebra such that $\rad(\Lg)$ is nilpotent of class 
at most two. Then we have $\mu(\Lg) \leq \dim(\Lg) + 1$.
\end{prop}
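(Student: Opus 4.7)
The plan is to mimic the strategy of Lemma \ref{2.14}: we will construct an outer derivation $\pi$ of $\Lg$ such that the one-dimensional split extension $\Lh := \C\pi \ltimes \Lg$ has trivial center. The adjoint representation of $\Lh$ will then be faithful of degree $\dim(\Lg) + 1$, and restricting along $\Lg \hookrightarrow \Lh$ yields a faithful representation of $\Lg$ of the same degree.

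If $\rad(\Lg) = 0$, then $\Lg$ is semisimple and its adjoint representation already gives the stronger bound $\mu(\Lg) \le \dim(\Lg)$. So assume $\Lr := \rad(\Lg) \ne 0$ and take a Levi decomposition $\Lg = \Ls \ltimes \Lr$. Since $\Ls$ is semisimple, its action on $\Lr$ is completely reducible, and we can pick an $\Ls$-invariant complement $V$ of the submodule $[\Lr, \Lr] \subseteq \Lr$. Define $D \in \End(\Lr)$ by $D|_V = \id_V$ and $D|_{[\Lr, \Lr]} = 2 \cdot \id$. A short computation, using the hypothesis $[\Lr, [\Lr, \Lr]] = 0$, verifies that $D$ is a derivation of $\Lr$; it is invertible with eigenvalues $1$ and $2$, and it is $\Ls$-equivariant by the $\Ls$-invariance of both $V$ and $[\Lr, \Lr]$. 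Lemma \ref{2.15} then supplies a derivation $\pi \in \Der(\Lg)$ defined by $\pi((X, r)) := (0, D(r))$.

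It remains to verify that $\Lh = \C\pi \ltimes \Lg$ has trivial center. Suppose $(\alpha \pi, (X, r))$ is central. Bracketing against elements of $\C \pi$ forces $\pi((X, r)) = 0$, hence $D(r) = 0$, and since $D$ is invertible on $\Lr$ we obtain $r = 0$. Bracketing against $\Ls$ then gives $[X, Y] = 0$ for all $Y \in \Ls$, so $X \in Z(\Ls) = 0$ as $\Ls$ is semisimple. Finally, bracketing against $\Lr$ yields $\alpha D(s) = 0$ for all $s \in \Lr$, and since $D \ne 0$ (as $\Lr \ne 0$), we conclude $\alpha = 0$. Thus $Z(\Lh) = 0$ and $\ad \colon \Lh \to \Lg\Ll(\Lh)$ is the desired faithful representation.

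The main obstacle is the construction of the $\Ls$-equivariant derivation $D$ acting invertibly on $Z(\Lr)$: this is precisely where the 2-step nilpotency enters, since for nilpotent $\Lr$ of higher class one cannot simply scale the successive terms of the lower central series and land in $\Der(\Lr)$, because of cross-brackets between different levels. The semisimplicity of $\Ls$ supplies the compatible splitting $\Lr = V \oplus [\Lr, \Lr]$, which is what permits us to carry this out equivariantly and then invoke Lemma \ref{2.15}.
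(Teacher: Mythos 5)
Your proof is correct and follows essentially the same route as the paper: a Levi decomposition, the $\Ls$-invariant complement of $[\Lr,\Lr]$, the invertible equivariant derivation $D=\id\oplus 2\,\id$, and Lemma \ref{2.15} to extend it to $\pi\in\Der(\Lg)$. The only (harmless, arguably cleaner) difference is that you verify $Z(\C\pi\ltimes\Lg)=0$ directly rather than invoking Lemma \ref{2.14}, which the paper states only for nilpotent $\Lg$, and your argument uniformly covers the abelian case instead of delegating it to Proposition \ref{2.13}.
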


\begin{proof}
Let $\Ls \ltimes \Ln$ be a Levi-decomposition for $\Lg$. If $\rad(\Lg)$ is abelian, the claim follows
from proposition $\ref{2.13}$.
Now assume that $\Ln$ is nilpotent of class two. As in the proof of proposition $\ref{2.13}$
we may assume that $\Ls$ acts faithfully on $\Ln$ and that $\Ls\subseteq \Der(\Ln)$. 
Now $\Ln_2 = [\Ln,\Ln]$ is an $\Ls$-submodule of $\Ln$, since $\Ls$ acts on $\Ln$ by derivations,
and $\Ln_2$ is invariant under these derivations, becuase it is a characteristic ideal.
Since $\Ls$ is semisimple, there exists an $\Ls$-invariant complement $\Ln_1$ to $[\Ln,\Ln]$. 
The $\Ls$-module decomposition $\Ln_1 + \Ln_2$ of $\Ln$ defines a linear transformation 
$D$ of $\Ln$ as follows: $D_{|\Ln_1} = \id_{\Ln_1}$ and $D_{|\Ln_2} = 2 \id_{\Ln_2}$. 
This is in fact a derivation of $\Ln$. Note that $D$ commutes with $\Ls$ in $\Der(\Ln)$. 
The derivation $D$ then extends to  a derivation $\pi$ of $\Lg = \Ls \ltimes \Ln$ by lemma
$\ref{2.15}$. Since $D$ is an isomorphism, $\pi_{\mid Z(\Lg)}$ is also an isomorphism.
By lemma $\ref{2.14}$, we may then conclude that $\mu(\Lg) \leq \dim(\Lg) + 1$. 
\end{proof}

\section{Quotients of the universal enveloping algebra}

\subsection{Order and length functions}
Let $\Ln$ be a nilpotent Lie algebra of dimension $n$ and class $c$. Consider a strictly 
descending filtration of $\Ln$ of the following form
\[
\Ln=\Ln^{[1]}\supset \Ln^{[2]} \supset \cdots \supset \Ln^{[C+1]}=0,
\]
where the $\Ln^{[i]}$ are subalgebras satisfying $[\Ln^{[i]},\Ln^{[j]}]\subseteq \Ln^{[i+j]}$
for all $1\le i,j\le C+1$. We say that the filtration is of {\it length} $C$, and we call
it an {\it adapted filtration}.
For example, such a filtration is given by the descending central series $\Ln^i$ for $\Ln$
of length $c$. To any such filtration associate a {\it order function}
\[
o:\Ln\ra \N\cup \{\infty\}, \quad x\mapsto \max_{t\in \N}\{x\in \Ln^{[t]}\}.
\]
If we let $\Ln^{[t]}=0$ for all $t\ge C+1$, then it makes sense to define $o(0)=\infty$.
It is easy to see that the order function $o$ satisfies the following two properties
\begin{align*}
o(x+y) & \ge \min \{o(x),o(y) \},\\
o([x,y]) & \ge o(x)+o(y)
\end{align*}
for all $x,y\in \Ln$. \\[0.2cm]
For a given subalgebra $\Lm$ of $\Ln$ satisfying $\Lm\supset \Ln^{[2]}$
we obtain an induced filtration
\[
\Lm\supset \Ln^{[2]} \supset \cdots \supset \Ln^{[C+1]}=0,
\]
and an associated order function. We extend the order function
to the universal enveloping algebra $U(\Ln)$ of $\Ln$ as follows.
Choose a basis $x_1,\ldots , x_n$ of $\Ln$ such that the first $n_1$ elements
span a complement of $\Ln^{[2]}$ in $\Ln$, the next $n_2$ elements span
a complement of $\Ln^{[3]}$ in $\Ln^{[2]}$, and so on. We identify the basis 
elements $x_i$ of $\Ln$ with the images $X_i$ in $U(\Ln)$ by the natural embedding.
The Poincar\'e-Birkhoff-Witt theorem
states that the monomials $X^{\al}=X_1^{\al_1}\cdots X_n^{\al_n}$ form a basis for $U(\Ln)$.
Now we set $o(X^{\al})=\sum_{j=1}^n \al_j o(X_j)$. For a linear combination
$W=\sum_{\al}c_{\al}X^{\al}$ we define $o(W)=\min_{\al}\{o(X^{\al})\mid c_{\al}\neq 0\}$.\\[0.2cm]
Furthermore we define a {\it length function} 
\[
\la\colon U(\Ln)\ra \N\cup \{ \infty \}
\]
by $\la(0)=\infty$, $\la(1)=0$ and
$\la(X^{\al})=\la (X_1^{\al_1}\cdots X_n^{\al_n})=\sum_{i=1}^n \al_i$.
Here $1$ denotes the unit element of $U(\Ln)$.
For a linear combination $W=\sum_{\al}c_{\al}X^{\al}$ we set 
$\la(W)=\min_{\al}\{\la(X^{\al})\mid c_{\al}\neq 0\}$. \\
The following result is well known for functions $o$ and $\la$ with respect
to the standard filtration of $\Ln$.
It easily generalizes to all adapted filtrations we have defined.

\begin{lem}\label{3.1}
For all $X,Y\in U(\Ln)$ we have the following inequalities:\\
\begin{itemize}
\item[(1)] $o(X+Y)\ge \min \{ o(X),o(Y)\}$.
\item[(2)] $o(XY)\ge o(X)+o(Y)$.
\item[(3)] $\la(X+Y)\ge \min \{ \la(X),\la(Y)\}$.
\item[(4)] $\la(X)\le o(X)$.
\end{itemize}
\end{lem}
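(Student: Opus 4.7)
My plan is to dispatch (1), (3) and (4) straight from the definitions, and then concentrate on (2), the only inequality with genuine content.

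For (1) and (3), write $X=\sum_\al c_\al X^\al$ and $Y=\sum_\al d_\al X^\al$ in the PBW basis so that $X+Y=\sum_\al(c_\al+d_\al)X^\al$. Any multi-index $\al$ with nonzero coefficient in $X+Y$ must have a nonzero coefficient in at least one of $X$ or $Y$; the corresponding monomial therefore has $o$-value at least $\min\{o(X),o(Y)\}$ (respectively $\la$-value at least $\min\{\la(X),\la(Y)\}$), and taking the minimum over such $\al$ yields the two bounds. For (4), every basis vector $X_j$ lies in $\Ln=\Ln^{[1]}$, so $o(X_j)\ge 1=\la(X_j)$; hence $o(X^\al)=\sum_j\al_j\,o(X_j)\ge\sum_j\al_j=\la(X^\al)$ for any standard monomial, and minimizing over nonzero monomials preserves the inequality for a general element.

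For (2), I would first use bilinearity and (1) to reduce to two standard monomials: it suffices to show $o(X^\al X^\be)\ge o(X^\al)+o(X^\be)$. I would prove a slightly stronger claim, which covers this because the naive product $X^\al X^\be$ is a product of basis elements with total order $o(X^\al)+o(X^\be)$: for any product $M=Y_1 Y_2\cdots Y_k$ of basis vectors in any order, the PBW-reduction of $M$ is a sum of standard monomials each of order at least $\sum_{i=1}^k o(Y_i)$. The essential input is that because our basis was chosen to span complements step-by-step in the filtration, one has $\Ln^{[m]}=\s\{X_j:o(X_j)\ge m\}$, and therefore, by the filtration property $[\Ln^{[i]},\Ln^{[j]}]\subseteq\Ln^{[i+j]}$, the bracket $[Y_a,Y_{a+1}]$ expands in the basis as $\sum_k c_k X_k$ with $o(X_k)\ge o(Y_a)+o(Y_{a+1})$ for every $k$ with $c_k\ne 0$. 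The claim is then established by double induction: outer strong induction on the length $k$, inner induction on the number of adjacent inversions. The base cases ($k\le 1$, or zero inversions) are immediate. For the inductive step, pick an adjacent inversion $Y_a Y_{a+1}$ and apply the identity $Y_a Y_{a+1}=Y_{a+1}Y_a+[Y_a,Y_{a+1}]$ inside $M$: the first resulting product still has length $k$ and the same naive total order but one fewer inversion, so the inner hypothesis applies; the second expands into a sum of products of length $k-1$ whose naive total orders are, by the key input, still at least $\sum_i o(Y_i)$, so the outer hypothesis applies. Combining both conclusions with (1) closes the induction and yields (2).

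The main obstacle is purely organizational: the filtration property already forces the inequality for single brackets, but one must choose a filtration-compatible basis so that the basis expansion of a commutator only involves vectors of sufficiently high order, and then nest the induction on length inside an induction on inversions; once this is set up, the rest of (2) is routine bookkeeping.
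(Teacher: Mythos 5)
The paper does not actually prove Lemma \ref{3.1}; it only remarks that the statement is well known for the standard filtration and ``easily generalizes'' to adapted filtrations. Your write-up supplies exactly the standard argument that remark alludes to, and it is essentially correct: (1), (3), (4) are immediate from the definitions as you say, and for (2) the key points are precisely the ones you isolate, namely that the basis is filtration-compatible, so that $\Ln^{[m]}$ is spanned by the basis vectors of order at least $m$ and hence $[Y_a,Y_{a+1}]$ expands into basis vectors of order at least $o(Y_a)+o(Y_{a+1})$, after which a PBW straightening induction finishes the job. The one detail to repair is your inner induction measure: the number of \emph{adjacent} inversions need not decrease when you transpose an adjacent out-of-order pair (e.g.\ the index pattern $3,1,2$ becomes $1,3,2$, each with one adjacent inversion), so the inner induction as stated does not terminate. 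Inducting instead on the \emph{total} number of inversions (pairs $a<b$ with $Y_a$ later than $Y_b$ in the basis order), which drops by exactly one under an adjacent transposition, makes the argument go through verbatim; everything else, including the outer strong induction on length and the reduction to standard monomials via bilinearity and (1), is sound.
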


Note that the elements of length $1$ are just the nonzero elements of $\Ln$.
Let
\[
V_t=\{ X\in U(\Ln)\mid o(X)\ge t \}.
\]
This is a $\Ln$-submodule of $U(\Ln)$, where the action is given by left-multiplication.
Furthermore we have $\Ln\cap V_t=\{0\}$ for all $t\ge C+1$.

\subsection{Actions on $U(\Ln)$}
The Lie algebra $\Ln$ acts naturally on $U(\Ln)$ by left multiplications.
We denote this action by $xY$, for $x\in \Ln$ and $Y\in U(\Ln)$.
We will show that semidirect products $\Ld\ltimes \Ln$ for subalgebras
$\Ld\le \Der(\Ln)$ also act naturally on $U(\Ln)$. First of all, $\Ld$ acts on 
$\Ln$ by derivations. Thus we already have an action of $\Ld$ on
the elements of length one in $U(\Ln)$. For $D\in \Der(\Ln)$ let $D(1)=0$ and
define recursively $D(XY)=D(X)Y+XD(Y)$ for all $X,Y\in U(\Ln)$. Then the
action of  $\Ld\ltimes \Ln$ on $U(\Ln)$ is given by
\[
(D,x).Y=D(Y)+xY
\]
for all $(D,x)\in  \Ld\ltimes \Ln$, and all $Y\in U(\Ln)$. 
This is well-defined, and we have the following useful lemma concerning
faithful quotients.

\begin{lem}\label{3.2}
Suppose that $W$ is a $\Ld\ltimes \Ln$-submodule of $U(\Ln)$ such that
$W\cap \Ln=0$. Then the quotient module $U(\Ln)/W$ is faithful.
\end{lem}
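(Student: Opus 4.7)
The plan is to unwind the definition of faithfulness directly by evaluating the action at the unit element $1\in U(\Ln)$.

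Suppose $(D,x)\in \Ld\ltimes \Ln$ acts as zero on $U(\Ln)/W$; equivalently, $(D,x).Y\in W$ for every $Y\in U(\Ln)$. I first plug in $Y=1$. Using the prescribed action together with the convention $D(1)=0$, this yields
\[
(D,x).1 = D(1)+x\cdot 1 = x \in W.
\]
Since $x\in \Ln$, the hypothesis $W\cap \Ln=0$ forces $x=0$.

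Having reduced to the case $x=0$, the remaining condition is $D(Y)\in W$ for all $Y\in U(\Ln)$. I now restrict $Y$ to lie in $\Ln\subset U(\Ln)$. Because $D$ is a derivation of $\Ln$, its canonical extension to $U(\Ln)$ restricts to the original derivation on $\Ln$, so $D(Y)\in \Ln$ for every $Y\in \Ln$. Combined with $D(Y)\in W$, this gives $D(Y)\in W\cap \Ln=0$. Hence $D$ vanishes on a generating set of $U(\Ln)$ and, being a derivation determined by its values on $\Ln$, is itself zero.

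Therefore $(D,x)=0$ in $\Ld\ltimes \Ln$, which shows that the representation on $U(\Ln)/W$ is faithful. There is no real obstacle here; the only subtle point is making sure the two mechanisms of the action (the derivation part and the left-multiplication part) can be separated, and that is achieved cleanly by testing on $Y=1$ to kill the derivation contribution and then on $Y\in \Ln$ to isolate the derivation.
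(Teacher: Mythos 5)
Your proof is correct, and since the paper states Lemma \ref{3.2} without proof, your argument supplies exactly the intended (and essentially unique natural) verification: testing the kernel condition at $Y=1$ isolates $x\in W\cap\Ln=0$, and then testing at $Y\in\Ln$ isolates $D(\Ln)\subseteq W\cap\Ln=0$, so $(D,x)=0$.
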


Consider a nilpotent Lie algebra $\Ln$ together with
the standard filtration given by the lower central series.
We have the following result.

\begin{prop}\label{proto}
Let $\Ln$ be a nilpotent Lie algebra of dimension $n$ and nilpotency class $c$. Let $\Ld$ be a subalgebra
of $\Der(\Ln)$ acting completely reducibly on $\Ln$. Then $V_{c+1}$ is a $\Ld\ltimes \Ln$-submodule 
of $U(\Ln)$ such that the quotient module $U(\Ln)/V_{c+1}$ is faithful of dimension 
at most $\frac{3}{\sqrt{n}}2^n$.
\end{prop}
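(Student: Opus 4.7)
The plan is to verify three things in sequence: (a) $V_{c+1}$ is a $\Ld\ltimes\Ln$-submodule of $U(\Ln)$; (b) $V_{c+1}\cap\Ln=\{0\}$, which together with (a) lets me invoke Lemma~\ref{3.2} to obtain faithfulness; and (c) the dimension of $U(\Ln)/V_{c+1}$, computed from the PBW basis, is bounded by $\frac{3}{\sqrt n}2^n$.

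For (a), stability under the left $\Ln$-action is immediate: if $x\in\Ln$ and $Y\in V_{c+1}$, then Lemma~\ref{3.1}(2) gives $o(xY)\ge o(x)+o(Y)\ge 1+(c+1)>c$, so $xY\in V_{c+1}$. Stability under $\Ld$ uses that each $\Ln^i$ is a characteristic ideal, so every $D\in\Ld\subseteq\Der(\Ln)$ preserves the order function on $\Ln$; extending $D$ to $U(\Ln)$ by the Leibniz rule and applying Lemma~\ref{3.1}(1) and (2) in an induction on monomials shows that $o(D(Y))\ge o(Y)$ for all $Y\in U(\Ln)$, hence $D(V_{c+1})\subseteq V_{c+1}$. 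For (b), the definition of nilpotency class gives $\Ln^{c+1}=0$, whence $V_{c+1}\cap\Ln=\{0\}$, and Lemma~\ref{3.2} yields faithfulness of $U(\Ln)/V_{c+1}$. I note that the complete-reducibility hypothesis on $\Ld$ is not actually needed for these steps; it presumably enters only the dimension estimate or is kept for compatibility with later applications.

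For (c), PBW gives a basis of $U(\Ln)/V_{c+1}$ consisting of the monomials $X^\al=X_1^{\al_1}\cdots X_n^{\al_n}$ with $\sum_j\al_j d_j\le c$, where $d_j:=o(X_j)\in\{1,\ldots,c\}$ and the multiplicities $n_i:=\#\{j:d_j=i\}=\dim(\Ln^i/\Ln^{i+1})$ obey $\sum_i n_i=n$, $n_i\ge 1$ for $1\le i\le c$, and $n_1\ge 2$ whenever $c\ge 2$ (so $c\le n-1$). The count of admissible $\al$ is the truncated coefficient sum
\[
N=\sum_{k=0}^{c}[z^k]\prod_{i=1}^{c}(1-z^i)^{-n_i}.
\]
The main obstacle is bounding $N$ by $\frac{3}{\sqrt n}2^n$. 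I would start from the generating-function inequality $N\le z^{-c}\prod_{i}(1-z^i)^{-n_i}$, valid for any $z\in(0,1)$. The naive evaluation at $z=1/2$ together with $(1-2^{-i})^{-1}\le 2$ only yields $N=O(2^n)$; to extract the extra factor $1/\sqrt n$ one must either optimize $z$ near the saddle point determined by the profile $(n_i)$ and the constraint $c\le n-1$, or else identify the extremal profile combinatorially and invoke Stirling-type concentration of binomial coefficients around their middle index to sharpen the estimate.
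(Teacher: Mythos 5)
Your parts (a) and (b) are correct and follow essentially the same route as the paper: $\Ln$-stability of $V_{c+1}$ from Lemma~\ref{3.1}(2), $\Ld$-stability from $o(D(W))\ge o(W)$, and faithfulness from $V_{c+1}\cap\Ln=0$ via Lemma~\ref{3.2}. Your side remark that complete reducibility is not needed for these steps is accurate and slightly sharper than the paper's argument: the paper uses complete reducibility to choose a PBW basis whose complements $C^{[i]}$ are $\Ld$-invariant, so that $D(x_j)$ lands back in $C^{[i]}$ and visibly has order $i$ or $\infty$; but since the $\Ln^{i}$ are characteristic, $D(x_j)\in\Ln^{i}$ already expands over basis vectors of order $\ge i$, which is all one needs. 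So the hypothesis is a convenience here (it matters in the later applications where the decomposition is produced by Neretin's construction).

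The genuine gap is point (c). You have set up the right counting problem --- the quotient is spanned by the classes of the PBW monomials $X^{\al}$ with $\sum_j \al_j\, o(X_j)\le c$, with the constraints $\sum_i n_i=n$, $n_i\ge 1$, $n_1\ge 2$ for $c\ge 2$ --- but you then explicitly stop short of proving the bound $\frac{3}{\sqrt n}2^n$, and the saddle-point/extremal-profile programme you sketch is not carried out. Since the dimension estimate is part of the assertion, the proof is incomplete as written. The paper does not prove this bound either: it cites \cite{BU7}, where precisely this count of monomials of order at most $c$ (viewing $U(\Ln)/V_{c+1}$ merely as an $\Ln$-module) is shown to be at most $\frac{3}{\sqrt n}2^n$. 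So either quote that reference, as the paper does, or supply the combinatorial estimate; as it stands, the hardest quantitative claim of the proposition is asserted rather than established.
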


\begin{proof}
Choose a basis for $\Ln$ associated to the standard filtration of $\Ln$ as in section $3.1$,
but with the additional requirement that each complement $C^{[i]}$ to $\Ln^{[i]}$
is also invariant under the action of $\Ld$, i.e., $D(C^{[i]})\subseteq C^{[i]}$ for all
$D \in \Ld$. This is possible since the $\Ln^{[i]}$ are characteristic ideals, hence invariant
under $\Ld$, so that they are submodules, which have a complementary submodule by the complete
reducibility.
Associate a PBW-basis for $U(\Ln)$ as before. Consider a basis element $x_j\in C^{[i]}$.
Then $o(x_j)=i$ and $o(D(x_j))\ge i$, since $D(x_j)$ is again in $C^{[i]}$, so has order $i$
or $\infty$. Hence it follows that $o(D(W))\ge o(W)$ for all $W\in U(\Ln)$. This means that
$V_{c+1}$ is a $\Ld$-submodule of $U(\Ln)$. Since we already know that $V_{c+1}$ is a $\Ln$-submodule,
it is a $\Ld\ltimes \Ln$-submodule of $U(\Ln)$. 
The quotient is faithful by lemma $\ref{3.2}$. Its dimension is bounded by $\frac{3}{\sqrt{n}}2^n$,
which was shown in \cite{BU7}, where it was considered just as an $\Ln$-module.
\end{proof}

\subsection{The construction of faithful quotients}
Let $\Ln$ be a nilpotent Lie algebra, together with some adapted filtration
$\Ln^{[t]}$ of length $C$, and a subalgebra $\Ld\le \Der(\Ln)$.

\begin{defi}\label{comp}
An ideal $J$ of $\Ln$ is called {\it compatible}, with respect to $\Ln^{[t]}$ and $\Ld$,
if it satisfies
\begin{itemize}
\item[(1)] $D(J)\subseteq J$ for all $D\in \Ld$,
\item[(2)] $J$ is abelian.
\item[(3)] $\Ln^{[t]}\subseteq J\subseteq \Ln^{[t+1]}$ for some $t\ge 0$.
\end{itemize}
\end{defi}

Denote by $\langle\langle J\rangle\rangle$ the linear subspace of $U(\Ln)$ generated by all
$Xy$ for $X\in U(\Ln)$ and $y\in J$. 
By assumption $J$ satisfies
\[
\Ln=\Ln^{[1]}\supset  \cdots \supset \Ln^{[t+1]}\supseteq J\supseteq \Ln^{[t]}\supset \cdots
\supset \Ln^{[C+1]}=0.
\]
For the rest of this section choose a basis $x_1,\ldots , x_n$ of $\Ln$ such that the 
first $n_1$ elements span a complement of $\Ln^{[2]}$ in $\Ln$, the next $n_2$ elements span
a complement of $\Ln^{[3]}$ in $\Ln^{[2]}$, and so on, including a basis
of a complement of $J$ in $\Ln^{[t+1]}$, and a complement of $\Ln^{[t]}$ in $J$.
A basis for $J$ is then of the form $x_m,\ldots ,x_n$ for some $m\ge 1$.
By the PWB-theorem we obtain standard monomials $X^{\al}$ in $U(\Ln)$ according to this basis.

\begin{lem}\label{3.4}
Let $J$ be a compatible ideal in $\Ln$. Then $\langle\langle J\rangle\rangle$ is 
the linear span of the standard monomials $X_1^{\al_1}\cdots X_n^{\al_n}$ with 
$(\al_m,\ldots ,\al_n)\neq (0,\ldots ,0)$. For any $W\in U(\Ln)$ and any $y\in J$ we have
$\la(Wy)\ge \la(W)+1$.
\end{lem}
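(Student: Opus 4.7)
The plan is to exploit the specific ordered PBW basis set up just before the lemma, in which $x_m,\ldots,x_n$ span the abelian ideal $J$. Combined with the hypothesis that $J$ is abelian (Definition \ref{comp}), this will make right-multiplication of standard monomials by elements of $J$ produce no commutator corrections whatsoever, and both assertions of the lemma will fall out of a single identity.

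First I would record the two essential features of the basis. Since $J$ is abelian, the generators $x_m,\ldots,x_n$ commute pairwise in $\Ln$, hence $X_j X_i = X_i X_j$ in $U(\Ln)$ for all $i,j \ge m$. Since $J$ occupies the tail of the basis, any pair $(j,i)$ with $j<m\le i$ already satisfies $j<i$, so $X_j X_i$ is already in PBW order. Consequently, if one right-multiplies a standard monomial $X^\al = X_1^{\al_1}\cdots X_n^{\al_n}$ by $x_i$ with $i \ge m$, one need only slide the newly appended factor $X_i$ past $X_{i+1}^{\al_{i+1}},\ldots,X_n^{\al_n}$, and every such swap is trivial. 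This gives the key identity
\[
X^\al \cdot x_i \;=\; X_1^{\al_1}\cdots X_{i-1}^{\al_{i-1}} X_i^{\al_i+1} X_{i+1}^{\al_{i+1}}\cdots X_n^{\al_n},
\]
a standard monomial of length $|\al|+1$ whose exponent in position $i\ge m$ is $\al_i+1\ge 1$, so it lies in the proposed spanning set $S := \{X^\be : (\be_m,\ldots,\be_n) \ne 0\}$.

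From this single identity both conclusions follow. The inclusion $\langle\langle J\rangle\rangle \subseteq \s(S)$ is immediate by bilinearity in $X$ and $y$; conversely, every $X^\al \in S$ has some $\al_i>0$ with $i \ge m$ and so, by the identity read in reverse, can be factored as $X^\al = Z \cdot x_i$ for a standard monomial $Z$, showing $\s(S) \subseteq \langle\langle J\rangle\rangle$. For the length inequality, the identity yields $\la(X^\al x_i) = |\al|+1 = \la(X^\al)+1$; applying Lemma \ref{3.1}(3) to the expansions $W = \sum c_\al X^\al$ and $y = \sum_{i \ge m} d_i x_i$ then gives $\la(Wy) \ge \la(W)+1$.

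The one obstacle worth flagging is that for generic products in $U(\Ln)$ the PBW-reduction does create shorter commutator terms, which would ruin the length bound. What saves us here is a small miracle baked into the compatibility conditions: abelian-ness of $J$ kills every commutator $[X_j,X_i]$ with $i,j \ge m$, while placing $J$ at the end of the basis guarantees that the appended factor $x_i$ never has to cross a generator $X_j$ with $j<m$. Thus no commutators are ever needed, and the computation is as transparent as if $J$ were central in $\Ln$.
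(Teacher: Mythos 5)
Your proof is correct and takes essentially the same approach as the paper: both arguments rest on the observation that, since $J$ is abelian and its basis vectors occupy the tail of the ordered PBW basis, appending a generator $x_i\in J$ to a standard monomial requires only trivial swaps past other elements of $J$, so $X^{\al}x_i$ is again a standard monomial of length $\la(X^{\al})+1$, and the rest follows by linearity and Lemma \ref{3.1}(3). The only difference is notational (exponent vectors versus locating the insertion index $i_r$ in a product of single factors), which does not change the substance.
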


\begin{proof}
First note that the monomials $X_1^{\al_1}\cdots X_n^{\al_n}$ with $(\al_m,\ldots ,\al_n)\neq 
(0,\ldots ,0)$ belong to $\langle\langle J\rangle\rangle$. They even span
$\langle\langle J\rangle\rangle$: assume that $T=X_{i_1}\cdots X_{i_{\ell}}$ is a standard
monomial of length $\ell$, and $x_k$ be a basis vector of $J$, i.e., $m\le k$. If $i_{\ell}\le k$ then
$Tx_k$ is one of our fixed standard monomials of length $\ell+1$, and obviously contained in 
$\langle\langle J\rangle\rangle$. Otherwise there exists a minimal $i_r$ such that
$i_{r-1}\le k<i_r$. Then, by definition of our basis for $\Ln$, all $X_{i_r}, \cdots , X_{i_{\ell}}$
are in $J$. Since $J$ is abelian, $X_{i_r} \cdots X_{i_{\ell}}x_k = x_kX_{i_r}\cdots X_{i_{\ell}}$.
Then we obtain $Tx_k=X_{i_1}\cdots X_{i_{r-1}}x_kX_{i_r} \cdots X_{i_{\ell}}$. This is a standard
monomial as above, contained in $\langle\langle J\rangle\rangle$, and of length $\ell+1$.
For an arbitrary element $W=\sum c_{\al}X^{\al}$ in $U(\Ln)$ we have, using $(3)$ of lemma $\ref{3.1}$,
\begin{align*}
\la(Wx_k) & = \la\bigl( \sum_{\al} c_{\al}X^{\al}x_k \bigr) \\
          & \ge \min_{\al} \{ \la(X^{\al}x_k)\} \\
          & \ge  \min_{\al} \{ \la(X^{\al})+1 \} \\
          & = \la(W)+1.
\end{align*}
Since the standard monomials $T=X^{\al}$ span $U(\Ln)$ as a vector space, the claim follows
by a similar computation.
\end{proof}

We define a subset 
\[
\CL_2=\langle W\in U(\Ln)\mid \la(W)\ge 2 \rangle
\]
of $U(\Ln)$.
Note that it is a vector space since a linear combination of elements of it is an
element again of length at least two. We have $\Ln\cap \CL_2=0$, since the nonzero
elements of $\Ln$ have length $1$.

\begin{lem}\label{3.5}
Let $J$ be a compatible ideal in $\Ln$, and $\Ld$ be a subalgebra of $\Der(\Ln)$. Then 
\[
W_J=\langle\langle J\rangle\rangle \cap \CL_2
\]
is a $\Ld\ltimes \Ln$-submodule of $U(\Ln)$, such that the quotient $U(\Ln)/W_J$ is
faithful.
\end{lem}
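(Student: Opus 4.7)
The plan is to verify two things: $W_J$ is stable under both the left $\Ln$-action and the derivation action of $\Ld$ on $U(\Ln)$, and $W_J\cap\Ln=0$, so that Lemma \ref{3.2} delivers faithfulness.

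The key observation is that every $W\in W_J$ admits a decomposition $W=\sum_i Z_iy_i$ with $y_i\in J$ and $\la(Z_i)\ge 1$. Indeed, Lemma \ref{3.4} shows that $W_J$ is spanned by standard PBW monomials $X^\al$ with $|\al|\ge 2$ and some $\al_k\ne 0$ for $k\ge m$; because $J$ is abelian the generators $x_j$ with $j\ge m$ pairwise commute in $U(\Ln)$, so we may rewrite such a monomial as $X^\al=T_\al\cdot x_k$ with $T_\al$ a standard monomial of length $|\al|-1\ge 1$.

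For the $\Ln$-action, write $W=\sum_iZ_iy_i$ as above; then $xW=\sum_i(xZ_i)y_i$. Each summand lies in $\langle\langle J\rangle\rangle$, and since $xZ_i$ belongs to the augmentation ideal $U(\Ln)_+=\{Z:\la(Z)\ge 1\}$, Lemma \ref{3.4} gives $\la((xZ_i)y_i)\ge \la(xZ_i)+1\ge 2$, so $xW\in W_J$. For the $\Ld$-action, the compatibility condition $D(J)\subseteq J$ gives
\[
D(W)=\sum_iD(Z_i)y_i+\sum_iZ_iD(y_i),
\]
with both families of summands in $\langle\langle J\rangle\rangle$. The extension of $D$ to $U(\Ln)$ is a derivation preserving $U(\Ln)_+$ (since $D(1)=0$ and $D(\Ln)\subseteq\Ln$), so $\la(D(Z_i))\ge 1$; Lemma \ref{3.4} then gives $\la(D(Z_i)y_i)\ge 2$ and $\la(Z_iD(y_i))\ge \la(Z_i)+1\ge 2$, so $D(W)\in W_J$.

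Finally, $W_J\subseteq\CL_2$ and nonzero elements of $\Ln$ have length exactly $1$, so $W_J\cap\Ln=0$, and Lemma \ref{3.2} yields faithfulness. The subtlety worth highlighting is that the length function $\la$ is not multiplicative on $U(\Ln)$ (PBW reductions introduce shorter commutator terms), so the decomposition into factors $Z_iy_i$ with $y_i\in J$ is what makes Lemma \ref{3.4} applicable and supplies the length bound of $2$ that naive multiplicativity would fail to provide.
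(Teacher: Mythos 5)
Your proof is correct and follows essentially the same route as the paper's: both reduce to the spanning set of $\langle\langle J\rangle\rangle\cap\CL_2$ given by Lemma \ref{3.4}, check $\Ln$- and $\Ld$-stability on elements of the form $Zy$ with $y\in J$ and $\la(Z)\ge 1$ using the inequality $\la(Wy)\ge\la(W)+1$, and conclude faithfulness from $W_J\cap\Ln\subseteq\CL_2\cap\Ln=0$ via Lemma \ref{3.2}. Your closing remark on the failure of multiplicativity of $\la$ is a useful clarification of why the right-factor $y\in J$ is essential, but it does not change the argument.
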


\begin{proof}
By the above remark, $W_J$ is a vector space. Let $x\in \Ln$, $W\in U(\Ln)$ and $x_k\in J$
such that $Wx_k \in W_J$. We want to show that $x(Wx_k)=(xW)x_k$ again is in $W_J$.
By definition it is in $\langle\langle J\rangle\rangle$. For the length we obtain,
using lemma $\ref{3.4}$, $\la((xW)x_k) \ge 1+\la(xW)\ge 2$. Hence $W_J$ is invariant
under the action of $\Ln$. Now we will show that $W_J$ is invariant under $\Ld$, so that
it is a $\Ld\ltimes \Ln$-submodule of $U(\Ln)$.
Let $D\in \Ld$ be a derivation. Then $D(Wx_k)=D(W)x_k+WD(x_k)$. Both terms on the RHS
are in $\langle\langle J\rangle\rangle$ by definition, and since $D(x_k)\in J$.
It remains to show that their length is at least $2$. Since by assumption $Wx_k\in W_J$, 
we have $\la(W)\ge 1$. This implies $\la(D(W))\ge 1$, and $\la(D(W)x_k)\ge \la(D(W))+1\ge 2$.
For the second term we obtain $\la(WD(x_k))\ge \la(W)+1\ge 2$. Since the sum of
two elements of length at least $2$ has lenght at least $2$, we obtain $D(Wx_k)\in W_J$.
Finally, we show that the quotient $U(\Ln)/W_J$ is faithful. By lemma $\ref{3.2}$ is suffices
to show that $\Ln\cap W_J=0$. This follows from $\Ln\cap W_J\subseteq \Ln\cap \CL_2=0$.
\end{proof}

We remark that the above quotient module will not yet be finite-dimensional in general.
We will achieve this by enlarging the submodule via $V_C$, where again
$V_t=\{X\in U(\Ln)\mid o(X)\ge t \}$, and $C$ denotes the length of the filtration attached to 
a compatible ideal $J$.

\begin{prop}\label{algo}
Let $J$ be a compatible ideal in $\Ln$. Suppose that $o(D(x))\ge o(x)+1$ for all $x\in \Ln$
and all $D\in \Ld$.Then 
\[
Z_J=\langle W_J, V_C\cap \CL_2\rangle
\]
is a $\Ld\ltimes \Ln$-submodule of $U(\Ln)$, such that the quotient $U(\Ln)/Z_J$ is
faithful and finite-dimensional.
\end{prop}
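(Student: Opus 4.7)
The plan is to leverage Lemma \ref{3.5}, which already gives that $W_J$ is a $\Ld\ltimes\Ln$-submodule contained in $\CL_2$. Since a sum of submodules is a submodule, it suffices to prove that $V_C\cap\CL_2$ is also a $\Ld\ltimes\Ln$-submodule; faithfulness and finite-dimensionality of $U(\Ln)/Z_J$ can then be checked directly.

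For the submodule property, the order condition is straightforward. For $x\in\Ln$ and $W\in V_C$, Lemma \ref{3.1}(2) gives $o(xW)\ge 1+C$, and an easy induction on the length of $W$ using the derivation rule $D(XY)=D(X)Y+XD(Y)$ and the hypothesis $o(D(y))\ge o(y)+1$ on $y\in\Ln$ shows $o(D(W))\ge o(W)+1$; hence $xW, D(W)\in V_{C+1}$. The length condition is the main obstacle, because $\CL_2$ alone is \emph{not} an $\Ln$-submodule: commuting factors into PBW standard form can generate length-$1$ contributions from iterated commutators. The key observation I would use is that a length-$1$ element of $U(\Ln)$ is nothing but an element of $\Ln$, and a nonzero such element of order at least $C+1$ would have to lie in $\Ln^{[C+1]}=0$, a contradiction. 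Hence no length-$1$ monomial can survive in the standard expansions of $xW$ or $D(W)$, so $\la(xW)\ge 2$ and $\la(D(W))\ge 2$, proving $V_C\cap\CL_2$ is stable.

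For faithfulness, both $W_J$ and $V_C\cap\CL_2$ sit inside $\CL_2$, so $Z_J\subseteq\CL_2$ and $Z_J\cap\Ln\subseteq\CL_2\cap\Ln=0$; Lemma \ref{3.2} then yields a faithful quotient. For finite-dimensionality I would enumerate the standard PBW monomials $X^{\al}$ avoiding $Z_J$: those of length at most $1$ form the finite set $\{1,X_1,\ldots,X_n\}$, while those of length at least $2$ must have $\al_m=\cdots=\al_n=0$ (to avoid $W_J$) and must satisfy $o(X^{\al})<C$ (to avoid $V_C\cap\CL_2$). By Lemma \ref{3.1}(4), such monomials have $\la(X^{\al})\le o(X^{\al})<C$, so they are monomials in only $m-1$ variables of bounded total degree, hence finite in number, completing the proof.
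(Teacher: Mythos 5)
Your proof is correct and follows essentially the same route as the paper's: both reduce to showing that the action of $\Ld\ltimes\Ln$ maps $V_C\cap\CL_2$ into $V_{C+1}$, and that $V_{C+1}\subseteq V_C\cap\CL_2$ because no monomial of length at most one can have order exceeding $C$. The only cosmetic difference is at the end, where the paper deduces finite-dimensionality from the inclusion $V_{C+1}\subseteq Z_J$, while you enumerate the surviving PBW monomials directly; both arguments are valid.
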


\begin{proof}
We first show that $\langle V_C\cap \CL_2\rangle$ is a $\Ld\ltimes \Ln$-submodule of 
$U(\Ln)$. The assumption also implies that $o(D(W))\ge o(W)+1$ for all $W\in U(\Ln)$.
Then for every $(D,x)$ in $\Ld\ltimes \Ln$ and every $W\in V_C\cap \CL_2$ we have
\begin{align*}
o((D,x).W) & = o(D(W)+xW) \\
           & \ge \min\{ o(D(W)),o(xW) \} \\
           & \ge o(W)+1 \\
           & \ge C+1.
\end{align*}
Hence $\langle V_C\cap \CL_2\rangle$ is mapped into $V_{C+1}$ under the action of 
$\Ld\ltimes \Ln$. But we have  $V_{C+1}\subseteq V_C\cap \CL_2$, because
$V_{C+1}\subseteq V_C$ and $V_{C+1}\subseteq \CL_2$. For the latter inclusion we note
that all elements of $V_{C+1}$ must have length at least $2$, since all elements of 
length at most $1$ are contained in $\Ln$, and $\Ln\cap V_{C+1}=0$. 
Hence all $(D,x).W$ are contained in $\langle V_C\cap \CL_2\rangle$. This implies
that $Z_J$ is a $\Ld\ltimes \Ln$-submodule, using lemma $\ref{3.5}$.
Since  $V_{C+1}\subseteq Z_J$ we have $\dim (U(\Ln)/Z_J)\le \dim(U(\Ln)/V_{C+1})$. Since  
the latter dimension is finite, we obtain that $U(\Ln)/Z_J$ is finite-dimensional.
Finally we show that the quotient module is faithful. Since $\Ln\cap Z_J\subseteq \Ln\cap \CL_2=0$
it follows from lemma $\ref{3.2}$.
\end{proof}

\subsection{Algorithmic construction}

We want to apply proposition $\ref{algo}$ to construct faithful modules of small dimension 
for a given nilpotent Lie algebras $\Lg$. 
The {\it input} is the Lie algebra $\Lg$ with a given basis, together with a 
decomposition $\Lg=\Ld\ltimes \Ln$,
for some ideal $\Ln$, a subalgebra $\Ld\subseteq \Der(\Ln)$, and choices of an admissible
filtration $\Ln^{[t]}$, a compatible ideal $J$, and so on, such that the assumptions of
the proposition are satisfied. The {\it output} will be a faithful $\Lg$-module of finite
dimension. How small this dimension is, will depend on clever choices of $\Ln$,$\Ld$, $J$,
$\Lg^{[t]}$, and so on. The algorithmic construction can be derived from the proof
of proposition $\ref{algo}$.
Let us illustrate this explicitly for the standard filiform Lie algebra $\Lg$ of dimension $4$,
with two different choices.
We choose a basis $x_1,\ldots x_4$ of $\Lg$ such that $[x_1,x_i]=x_{i+1}$ for $i=2,3$.

\begin{ex}
Write $\Lg=\Ld\ltimes \Ln$ with $\Ln=\langle x_1,x_3,x_4\rangle$ and $\Ld=\langle \ad(x_2)_{\mid \Ln}
\rangle$. Choose the filtration $\Ln=\Ln^{[1]}\supset \Ln^{[2]} \supset  \Ln^{[3]} \supset \Ln^{[4]}=0$
of length $C=3$ by $\Ln^{[2]}=\langle x_3,x_4\rangle$ and $\Ln^{[3]}=\langle x_4\rangle$.
Choose $J=\Ln^{[2]}$ as the compatible ideal. Then all conditions of the proposition are satisfied, 
and we obtain a faithful $\Lg$-module of dimension $5$.
\end{ex}

First note that we really have a filtration, $J$ is indeed a compatibe ideal, and the assumption
for the derivations in $\Ld$ is satisfied. 
Now the basis elements of order at most $3$ in $U(\Ln)$ are given as follows:
$1$ has order $0$; $X_1$ has order $1$; $X_3,X_1^2$ have order $2$, and
$X_1^3,X_1X_3,X_4$ have order $3$. Also, $1$ has length $0$, and $X_1,X_3,X_4$ have length $1$. 
Then we obtain 
\begin{align*} 
U(\Ln) & =\langle 1,X_1,X_3,X_1^2,X_1^3,X_1X_3,X_4\rangle +V_4, \\ 
\langle\langle J \rangle\rangle & =\langle X_3,X_1X_3,X_4\rangle + V_4',\\
W_J & =\langle X_1X_3\rangle+V_4''\\
Z_J & =\langle X_1X_3,X_1^3\rangle+V_4.
\end{align*}
where $V_4',V_4''$ are subspaces of $V_4$. 
Hence we obtain that
\[
U(\Ln)/Z_J=\langle \ov{1},\ov{X_1},\ov{X_3},\ov{X_1^2},\ov{X_4}\rangle
\]
where the bar denotes the cosets. This is a faithful $\Lg$-module of dimension $5$.
We can compute it explicitly, giving the action of the generators $x_1,x_2$ of $\Lg$.
\begin{align*}
x_1\cdot \ov{1}& =\ov{X_1},\; x_1\cdot \ov{X_1}=\ov{X_1^2},\; x_1\cdot \ov{X_3}=\ov{0},\;
x_1\cdot \ov{X_1^2}=\ov{0},\;  x_1\cdot \ov{X_4}=\ov{0}, \\
x_2\cdot \ov{1}& = \ov{0},\; x_2\cdot \ov{X_1}=[X_2,X_1]=- \ov{X_3},\; x_2\cdot \ov{X_3}=\ov{0},\; 
x_2\cdot \ov{X_1^2}=\ov{X_4},\; x_2\cdot \ov{X_4}=\ov{0}.
\end{align*}

Here we have 
\begin{align*}
x_2\cdot X_1^2& =[X_2,X_1^2] \\
 & = [X_2,X_1]X_1+X_1[X_2,X_1] \\
 & = -X_3X_1-X_1X_3 \\
 & = -[X_3,X_1]-2X_1X_3 \\
 & = X_4-2X_1X_3,
\end{align*}
so that $x_2\cdot \ov{X_1^2}=\ov{X_4}$.
Note that this $\Lg$-module has a submodule, generated by  $\ov{X_1^2}$ with a faithful
quotient of dimension $4$. Since $\mu(\Lg)=4$, the result is optimal. \\
In the second example we will directly obtain a faithful $4$-dimensional $\Lg$-module.
It will not be isomorphic to the above quotient module.

\begin{ex}\label{3.9}
Write $\Lg=\Ld\ltimes \Ln$ with $\Ln=\langle x_2,x_3,x_4\rangle$ and $\Ld=\langle \ad(x_1)_{\mid \Ln}
\rangle$. Choose the filtration $\Ln=\Ln^{[1]}\supset \Ln^{[2]} \supset  \Ln^{[3]} \supset \Ln^{[4]}=0$
of length $C=3$ by $\Ln^{[2]}=\langle x_3,x_4\rangle$ and $\Ln^{[3]}=\langle x_4\rangle$.
Choose $J=\Ln^{[1]}$ as the compatible ideal. Then all conditions of the proposition are satisfied, 
and we obtain a faithful $\Lg$-module of dimension $4$.
\end{ex}

Note that $J$ is an abelian ideal of codimension $1$ in $\Lg$. With $D=\ad(x_1)_{\mid \Ln}$ we have
$D(x_2)=x_3$ and $D(x_3)=x_4$. 
The elements of order at most $3$ in $U(\Ln)$ are given as follows: $1$ has order $0$; $X_2$ has 
order $1$; $X_3,X_2^2$ have order $2$, and $X_2^3,X_2X_3,X_4$ have order $3$. 
Then we obtain 
\begin{align*} 
U(\Ln) & =\langle 1,X_2,X_3,X_2^2,X_2^3,X_2X_3,X_4\rangle +V_4, \\ 
\langle\langle J \rangle\rangle & =\langle X_2,X_3,X_2^2,X_2^3,X_2X_3,X_4 \rangle + V_4',\\
W_J & =\langle X_2^2,X_2^3,X_2X_3\rangle+V_4''\\
Z_J & =\langle X_2^2,X_2^3,X_2X_3\rangle+V_4.
\end{align*}
where $V_4',V_4''$ are subspaces of $V_4$. Hence we obtain that
\[
U(\Ln)/Z_J=\langle \ov{1},\ov{X_2},\ov{X_3},\ov{X_4}\rangle.
\]
This is a faithful $\Lg$-module of dimension $4$. It is given by

\begin{align*}
x_1\cdot \ov{1}& =\ov{0},\; x_1\cdot \ov{X_2}=\ov{X_3},\; x_1\cdot \ov{X_3}=\ov{X_4},\;
x_1\cdot \ov{X_4}=\ov{0}, \\
x_2\cdot \ov{1}& = \ov{X_2},\; x_2\cdot \ov{X_2}=\ov{0},\; x_2\cdot \ov{X_3}=\ov{0},\; 
x_2\cdot \ov{X_4}=\ov{0}.
\end{align*}

\section{Applications}

\subsection{A general bound}

It is interesting to ask for good estimates on $\mu(\Lg)$ for arbitrary Lie
algebras. So far, general bounds have only been given for nilpotent Lie algebras.
For example, if $\Lg$ is nilpotent of dimension $r$ and of class $c$, then
$\mu(\Lg)\le \binom{r+c}{c}$, see \cite{GRA}. Independently of $c$ we have
$\mu(\Lg)\le \frac{3}{\sqrt{r}}2^r$, see \cite{BU7}.
There have been some attempts to find similar estimates for solvable
Lie algebras. We will present here such a bound for
arbitrary Lie algebras $\Lg$. Denote by  $\Ln$ the nilradical of $\Lg$, and by $\Lr$
its solvable radical. We may assume that $\Lr$ is non-trivial, because otherwise the 
adjoint representation is faithful. Hence let $\dim(\Lr)=r\ge 1$. We will show that 
$\mu(\Lg)\le \mu(\Lg/\Ln)+\frac{3}{\sqrt{r}}\cdot 2^r$. \\
We start with the following result of Neretin \cite{NER}, which we have slightly
reformulated for our purposes.

\begin{prop}\label{ner}
Let $\Lg$ be a complex Lie algebra with solvable radical $\Lr$ and Levi decomposition
$\Lg=\Ls\ltimes \Lr$. Let $\Lp$ be a reductive subalgebra of $\Lg$ and
$\Lm$ a nilpotent ideal satisfying the following properties:
\begin{itemize}
\item[(a)] $\Lp\cap \Lm=0$,
\item[(b)] $[\Lg,\Lr]\subseteq \Lm$ and $\Ls\subseteq \Lp$,
\item[(c)] $\Lp$ acts completely reducibly on $\Lm$. 
\end{itemize}
Then there exists a nilpotent Lie algebra $\Lh$ of dimension $\dim(\Lg)-\dim(\Lp)$
such that $\Lg$ embedds into a Lie algebra $(\Lp \oplus \C^{\ell})\ltimes \Lh$, with
$\ell=\dim(\Lg/(\Lp\ltimes\Lm))$, and the action of  $\Lp \oplus \C^{\ell}$ on $\Lh$
is completely reducible.
\end{prop}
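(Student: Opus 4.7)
The approach is to construct $\Lh$ by keeping $\Lm$ as a nilpotent ideal but replacing the adjoint action on $\Lm$ of a complement to $\Lp\ltimes\Lm$ by its nilpotent Jordan component, while the corresponding semisimple components are absorbed into an abelian factor $\C^\ell$; the direct sum $\Lp\oplus\C^\ell$ will then act completely reducibly on $\Lh$, and the original adjoint action of $\Lg$ on $\Lm$ is recovered inside the semidirect product.

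First I would set up the decomposition. Condition (b) gives $\Lr/\Lm$ central in $\Lg/\Lm$, so $\Lg/\Lm\cong\Ls\oplus\Lr/\Lm$ with $\Lr/\Lm$ abelian; by (a) the image of $\Lp$ in $\Lg/\Lm$ is all of $\Lp$. I would choose a vector-space complement $\Lq\subseteq\Lr$ of $(\Lp\cap\Lr)+\Lm$ inside $\Lr$, so that $\Lg=\Lp\oplus\Lm\oplus\Lq$ as vector spaces and $\dim\Lq=\ell$. Two crucial inclusions then hold: $[\Lp,\Lq]_\Lg\subseteq\Lm$ (since the image of $\Lq$ in $\Lg/\Lm$ is central, so it is acted on trivially by $\Lp$) and $[\Lq,\Lq]_\Lg\subseteq\Lm$ (since $\Lr/\Lm$ is abelian).

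Second, I would perform a Jordan decomposition and build $\Lh$. For each $q\in\Lq\subseteq\Lr$, write $D_q:=\ad_q|_\Lm=S_q+N_q$ in $\Der(\Lm)$ with $S_q$ semisimple and $N_q$ nilpotent. Lie's theorem applied to the solvable image $\ad_\Lr|_\Lm\subseteq\Der(\Lm)$ triangularizes this family in a common basis, so the $S_q$'s, being the diagonals, form a commuting family. By uniqueness of the Jordan decomposition together with the semisimplicity of $\ad_\Lp|_\Lm$ from (c), each $S_q$ moreover commutes with $\ad_\Lp|_\Lm$. Crucially, $S_m|_\Lm=0$ for every $m\in\Lm$, because $\ad_m|_\Lm$ is already nilpotent. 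I would then equip the vector space $\Lm\oplus\Lq$ with the brackets
\[
[m_1,m_2]_\Lh := [m_1,m_2]_\Lg,\qquad [q,m]_\Lh := N_q(m),\qquad [q_1,q_2]_\Lh := [q_1,q_2]_\Lg\in\Lm,
\]
and denote the result by $\Lh$. Jacobi in $\Lh$ reduces to Jacobi in $\Lg$ combined with the vanishing $S|_\Lm=0$, and nilpotency of $\Lh$ follows from Engel's theorem, as every $\ad_\Lh$-operator factors through $\Lm$ and acts nilpotently there.

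Finally, let $\C^\ell$ be an abelian Lie algebra identified with $\Lq$ via a map $\iota$, acting on $\Lh$ by $S_q$ on $\Lm$ and trivially on $\Lq$; together with the natural $\Lp$-action (by $\ad$ on $\Lm$ and by $[\Lp,\Lq]_\Lg\subseteq\Lm$ on $\Lq$), this gives a completely reducible action of $\Lp\oplus\C^\ell$ on $\Lh$ by derivations, thanks to simultaneous diagonalizability of commuting semisimples and condition (c). The map $\phi:\Lg\hookrightarrow(\Lp\oplus\C^\ell)\ltimes\Lh$ defined by $p+m+q\mapsto(p+\iota(q),m+q)$ is then a Lie algebra homomorphism, because the identity $D_q=S_q+N_q$ reproduces the $\Lg$-bracket $[q,m]_\Lg=D_q(m)$ as the sum of the $\C^\ell$-action $S_q(m)$ and the $\Lh$-bracket $N_q(m)$; injectivity follows from the direct-sum decompositions of source and target. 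The main technical obstacle is precisely the Jordan-decomposition step: showing that $\{S_q\}$ commutes pairwise and with $\ad_\Lp|_\Lm$, and that it vanishes on $\Lm$, so as to make both $\Lh$ a Lie algebra and $\phi$ a homomorphism. These structural facts rest on the Chevalley–Jordan machinery for solvable Lie algebras acting on their nilradicals together with uniqueness of the Jordan decomposition relative to the completely reducible $\Lp$-action from (c); once they are in place, the remainder is bookkeeping.
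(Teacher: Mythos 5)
First, note that the paper does not actually prove this proposition: it is quoted as a (slightly reformulated) result of Neretin \cite{NER}, so there is no in-paper argument to compare yours against, and I can only judge your proposal on its own terms. Your overall strategy --- split off a complement $\Lq$ of $\Lp\ltimes\Lm$, Jordan-decompose $\ad_q|_{\Lm}=S_q+N_q$, keep the nilpotent parts inside $\Lh$ and move the semisimple parts into $\C^{\ell}$ --- is the right shape for such a construction. But the step you yourself flag as ``the main technical obstacle'' is exactly where the proof breaks: for an \emph{arbitrary} vector-space complement $\Lq$, the operators $S_q$ need not commute with one another, need not commute with $\ad_{\Lp}|_{\Lm}$, and need not depend linearly on $q$. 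Your justification via Lie's theorem is incorrect, because the semisimple Jordan component of an upper-triangular operator is not its diagonal part (a triangular matrix with distinct diagonal entries is already semisimple), so simultaneous triangularization of $\ad_{\Lr}|_{\Lm}$ gives no control over the family $\{S_q\}$. Likewise $[\ad_p|_{\Lm},\ad_q|_{\Lm}]=\ad_{[p,q]}|_{\Lm}$ is nilpotent but in general nonzero, so uniqueness of the Jordan decomposition does not yield $[\ad_p|_{\Lm},S_q]=0$.

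Concretely: let $\Lm=\Lh_1=\langle e_1,e_2,e_3\rangle$ with $[e_1,e_2]=e_3$, and let $\Lg=\Lr=\langle q_1,q_2\rangle\oplus\Lm$ with $\ad_{q_1}|_{\Lm}=\diag(1,0,1)$, with $\ad_{q_2}|_{\Lm}$ given by $e_1\mapsto 2e_1$, $e_2\mapsto 3e_2+e_3$, $e_3\mapsto 5e_3$, and with $[q_1,q_2]=e_1$. One checks that both operators are derivations of $\Lh_1$, that their commutator equals $\ad_{e_1}|_{\Lm}$, and that the Jacobi identity holds, so this is a $5$-dimensional solvable Lie algebra in which $\Lm$ is a nilpotent ideal containing $[\Lg,\Lr]$ (take $\Lp=0$, $\ell=2$). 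Both $\ad_{q_i}|_{\Lm}$ have three distinct eigenvalues, hence are semisimple, so $S_{q_i}=\ad_{q_i}|_{\Lm}$ and $N_{q_i}=0$; consequently $[S_{q_1},S_{q_2}]=\ad_{e_1}|_{\Lm}\neq 0$, your map $\C^{2}\to\Der(\Lh)$ is not a homomorphism, and your bracket on $\Lh$ even violates Jacobi on the triple $(q_1,q_2,e_2)$, since $[[q_1,q_2]_{\Lh},e_2]_{\Lh}=e_3$ while the other two terms vanish. The proposition does hold here, but only after replacing $q_2$ by $q_2-e_1$, which makes $[q_1,q_2-e_1]=0$ and renders the two semisimple parts simultaneously diagonal. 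This is the missing idea: one must \emph{choose} the complement, i.e.\ the representatives modulo $\Lm$, so that the semisimple parts form a commuting family, linear in $q$ and commuting with $\ad_{\Lp}|_{\Lm}$. That choice is the real substance of Neretin's (equivalently, Mal'cev's splitting) argument and requires its own nontrivial proof; it does not follow from Lie's theorem together with uniqueness of the Jordan decomposition.
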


We note the following corollary.

\begin{cor}
Let $\Lg$ be a complex Lie algebra with solvable radical $\Lr$ and nilradical $\Ln$. 
Then there exists a nilpotent Lie algebra $\Lh$ of dimension $\dim(\Lr)$ such that
$\Lg$ embedds into a Lie algebra $(\Lg/\Ln)\ltimes \Lh$, and the action of $\Lg/\Ln$
on $\Lh$ is completely reducible.
\end{cor}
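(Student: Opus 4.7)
The plan is to apply Proposition \ref{ner} with the simplest nontrivial choice of $\Lp$ and $\Lm$, namely a Levi subalgebra and the nilradical itself, and then identify the resulting quotient $\Lp \oplus \C^{\ell}$ with $\Lg/\Ln$ as Lie algebras.

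Concretely, fix a Levi decomposition $\Lg = \Ls \ltimes \Lr$ and set $\Lp := \Ls$, $\Lm := \Ln$. I would verify the three hypotheses of Proposition \ref{ner} in turn. First, $\Lp = \Ls$ is reductive (even semisimple) and $\Lm = \Ln$ is nilpotent. Second, $\Ls \cap \Ln$ is a solvable (even nilpotent) ideal of the semisimple algebra $\Ls$, hence is zero, so $\Lp \cap \Lm = 0$. Third, the classical inclusion $[\Lg,\Lr] \subseteq \Ln$ gives $[\Lg,\Lr] \subseteq \Lm$, and $\Ls \subseteq \Lp$ holds by construction. Finally, $\Ls$ acts on $\Ln$ by the restriction of the adjoint representation, and by Weyl's theorem of complete reducibility this action is completely reducible.

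Proposition \ref{ner} then yields a nilpotent Lie algebra $\Lh$ of dimension $\dim(\Lg)-\dim(\Lp) = \dim(\Lg)-\dim(\Ls) = \dim(\Lr)$, into which $\Lg$ embeds as $(\Lp \oplus \C^{\ell}) \ltimes \Lh$ with $\ell = \dim(\Lg/(\Lp \ltimes \Lm))$ and with completely reducible action. Since $\Lp \cap \Lm = 0$, the dimension count gives $\ell = \dim(\Lg) - \dim(\Ls) - \dim(\Ln) = \dim(\Lr) - \dim(\Ln)$.

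It remains to identify $\Lp \oplus \C^{\ell}$ with $\Lg/\Ln$ as Lie algebras. From $[\Lg,\Lr] \subseteq \Ln$ one obtains that $\Lr/\Ln$ is abelian and that $\Ls$ centralizes $\Lr/\Ln$ in the quotient, so $\Lg/\Ln \cong \Ls \oplus \Lr/\Ln$ as a direct sum of Lie algebras, with the second summand abelian of dimension $\dim(\Lr)-\dim(\Ln) = \ell$. This matches $\Lp \oplus \C^{\ell}$ exactly, and pulling back the $\Lh$-action through this isomorphism finishes the proof. The only subtle step is this last identification, but once the standard fact $[\Lg,\Lr] \subseteq \Ln$ is invoked it is immediate.
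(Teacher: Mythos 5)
Your proof is correct and follows exactly the same route as the paper: choose $\Lp=\Ls$ and $\Lm=\Ln$ in Proposition \ref{ner}, check conditions $(a)$--$(c)$ via $\Ls\cap\Ln\subseteq\Ls\cap\Lr=0$, the fact that $[\Lg,\Lr]$ is a nilpotent ideal hence contained in $\Ln$, and Weyl's theorem. The paper leaves the final identification $\Lp\oplus\C^{\ell}\cong\Lg/\Ln$ implicit ("the result follows"), whereas you spell it out; that added detail is accurate and harmless.
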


\begin{proof}
In the notation of the above proposition write
$\Lg=\Ls\ltimes \Lr$ and choose $\Lp=\Ls$, and $\Lm=\Ln$.
Then the conditions $(a)-(c)$ are satisfied. Indeed, $\Ls\cap\Ln\subseteq \Ls\cap \Lr=0$.
Furthermore $[\Lg,\Lr]$ is a nilpotent ideal, hence is contained in $\Ln$. Finally,
$\Ls$ acts completely reducibly on $\Ln$, because $\Ls$ is semisimple. The result
follows.
\end{proof}

We obtain the following bound on $\mu(\Lg)$:

\begin{prop}
Let $\Lg$ be a complex Lie algebra with nilradical $\Ln$ and
solvable radical $\Lr$. Assume that $\dim(\Lr)=r\ge 1$. Then we have
\[
\mu(\Lg)\le \mu(\Lg/\Ln)+\frac{3}{\sqrt{r}}\cdot 2^r
\]
\end{prop}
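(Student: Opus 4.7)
The plan is to apply the preceding corollary to embed $\Lg$ into a convenient semidirect product, invoke Proposition~\ref{proto} to produce a faithful module on the nilpotent factor, and then compensate for a possible lack of faithfulness on the reductive part by adding a faithful $\Lg/\Ln$-module.

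First I would use the corollary above to obtain a nilpotent Lie algebra $\Lh$ of dimension $r$ such that $\Lg$ embeds into $\Lg':=(\Lg/\Ln)\ltimes \Lh$, with $\Lg/\Ln$ acting completely reducibly on $\Lh$. By Lemma~\ref{mono} it suffices to bound $\mu(\Lg')$. Writing $\phi\colon \Lg/\Ln\to\Der(\Lh)$ for the action and setting $\Ld:=\phi(\Lg/\Ln)$, the subalgebra $\Ld\subseteq\Der(\Lh)$ also acts completely reducibly on $\Lh$, so Proposition~\ref{proto} applies to $\Ld\ltimes\Lh$ and yields a faithful $(\Ld\ltimes\Lh)$-module $V_1$ of dimension at most $\frac{3}{\sqrt{r}}\cdot 2^r$.

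Next I would pull $V_1$ back to $\Lg'$ along the surjective Lie algebra homomorphism
\[
\Lg'=(\Lg/\Ln)\ltimes\Lh\longrightarrow \Ld\ltimes\Lh,\quad (x,y)\mapsto(\phi(x),y),
\]
so that, as a $\Lg'$-module, its annihilator is precisely the ideal $\ker(\phi)\subseteq \Lg/\Ln\subset\Lg'$. Separately I would pick any faithful $\Lg/\Ln$-module $V_2$ of dimension $\mu(\Lg/\Ln)$ and pull it back along the quotient $\Lg'\to \Lg/\Ln$; the annihilator of the resulting $\Lg'$-module is the ideal $\Lh$.

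Finally I would verify that $V_1\oplus V_2$ is a faithful $\Lg'$-module: its annihilator is the intersection of the two annihilators above, namely $\ker(\phi)\cap\Lh$, which vanishes because $\ker(\phi)$ sits in the first factor and $\Lh$ is the second factor of the semidirect product. The dimension count then gives $\mu(\Lg')\le \mu(\Lg/\Ln)+\frac{3}{\sqrt{r}}\cdot 2^r$, and Lemma~\ref{mono} yields the stated estimate. The calculations are routine; the main conceptual obstacle is recognizing that $\phi$ need not be injective, so that the pulled-back $V_1$ alone is in general only faithful modulo $\ker(\phi)$, and that this defect is precisely corrected by taking the direct sum with the pulled-back $V_2$.
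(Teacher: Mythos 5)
Your proof is correct, and it follows the paper's skeleton up to the decisive step: both arguments embed $\Lg$ into $(\Lg/\Ln)\ltimes \Lh$ via the corollary, reduce to that semidirect product by Lemma \ref{mono}, and invoke Proposition \ref{proto} to get the $\frac{3}{\sqrt{r}}2^r$ term. Where you diverge is in handling the possible non-injectivity of the action $\phi\colon \Lg/\Ln\to\Der(\Lh)$. The paper exploits that $\Lq=\Lg/\Ln$ is reductive to split it as $\Lq=\Lq_1\oplus\Lq_2$ with $\Lq_1=\ker\phi$ and $\Lq_2$ acting faithfully, rewrites $\Lq\ltimes\Lh$ as $\Lq_1\oplus(\Lq_2\ltimes\Lh)$, and applies subadditivity of $\mu$; you instead keep $\Lg'=(\Lg/\Ln)\ltimes\Lh$ intact and take the direct sum of two pulled-back modules, checking that the two annihilators $\ker\phi$ and $\Lh$ intersect trivially. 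Your annihilator computation is right, and the complete reducibility of the $\Ld=\phi(\Lg/\Ln)$-action on $\Lh$ is indeed inherited from that of $\Lg/\Ln$, so Proposition \ref{proto} applies as you claim. Your route has two small advantages: it does not need the ideal complement $\Lq_2$ to exist (so it does not lean on reductivity of $\Lq$ at this point), and it delivers the $\mu(\Lg/\Ln)$ term directly, whereas the paper's displayed chain relaxes $\mu(\Lq_1)$ to $\dim(\Lq)$ in its last line and strictly speaking proves the bound with $\dim(\Lg/\Ln)$ in place of $\mu(\Lg/\Ln)$ (the stated bound still follows since $\mu(\Lq_1)\le\mu(\Lq)$). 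The paper's version is slightly shorter because subadditivity of $\mu$ over direct sums is already on record in Lemma \ref{mono}.
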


\begin{proof}
We can embedd $\Lg$ into a Lie algebra $(\Lg/\Ln)\ltimes \Lh$ as in the corollary,
where $\Lh$ is a nilpotent Lie algebra of dimension $\dim(\Lr)$, and $\Lq=\Lg/\Ln$
is reductive. 
This means $\Lg \subseteq \Lq \ltimes \Lh$, and hence $\mu(\Lg)\le \mu(\Lq \ltimes \Lh)$ 
by lemma $\ref{mono}$. Now we want to apply proposition $\ref{proto}$ to $\Lq \ltimes \Lh$.  
For that we need that $\Lq$ is a subalgebra
of $\Der(\Lh)$, or equivalently, that $\Lq$ acts faithfully on $\Lh$. However,  
we may always decompose the reductive Lie algebra $\Lq$ as
$\Lq=\Lq_1\oplus \Lq_2$, where $\Lq_1$ commutes with $\Lh$, and $\Lq_2$ acts faithfully
and completely reducibly on $\Lh$. Again by lemma $\ref{mono}$, we obtain
$\mu(\Lq \ltimes \Lh)\le \mu(\Lq_1)+\mu(\Lq_2\ltimes \Lh)$.
We have $\mu(\Lq_1)\le \mu(\Lq)$ because of $\Lq_1\subseteq \Lq$. Furthermore we have
$\mu(\Lq)\le \dim (\Lq)$ by proposition $\ref{2.13}$. Now proposition $\ref{proto}$
can be applied to $\Lq_2\ltimes \Lh$, and we obtain
\begin{align*}
\mu(\Lg) & \le \mu(\Lq \ltimes \Lh) \\
         & \le \mu(\Lq_1)+\mu(\Lq_2\ltimes \Lh) \\
         & \le \dim (\Lq)+\frac{3}{\sqrt{r}}\cdot 2^r
\end{align*}
\end{proof}

\subsection{Two-step nilpotent Lie algebras}

It is well known that we have $\mu(\Lg)\le \dim(\Lg)+1$ for all
two-step nilpotent Lie algebras $\Lg$, see \cite{BU7}. It is not so easy to improve
this bound in general. Of course, for certain classes of two-step nilpotent Lie algebras
better bounds can be produced. We show the following result.

\begin{prop}
It holds $\mu(\Lg)\le \dim(\Lg)$ for all two-step nilpotent Lie algebras $\Lg$.
\end{prop}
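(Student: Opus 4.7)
My plan is to argue by induction on $n=\dim(\Lg)$, combining Remark~2.10 with a cocycle construction that extends a faithful module for a central quotient by one dimension.

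First, the abelian case is handled by the bound $\mu(\Lg)=\lceil 2\sqrt{n-1}\,\rceil\le n$ from \cite{BUM}, and the fully decomposable reductions follow from the subadditivity of $\mu$ in Lemma~\ref{mono}. Using Remark~2.10, write $\Lg=\C^{\ell}\oplus\Lm$ with $Z(\Lm)\subseteq[\Lm,\Lm]$; if $\ell\ge 1$, induction on $\dim(\Lm)$ together with Lemma~\ref{mono} yields $\mu(\Lg)\le \ell+\dim(\Lm)=n$. So it suffices to treat the indecomposable case $Z(\Lg)\subseteq[\Lg,\Lg]$.

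In that case, pick $z\in Z(\Lg)\setminus\{0\}$, which necessarily lies in $W:=[\Lg,\Lg]$. The quotient $\bar{\Lg}:=\Lg/\C z$ is again two-step nilpotent of dimension $n-1$, so by induction there is a faithful $\bar{\Lg}$-module $M$ with $\dim(M)\le n-1$. Pulled back, $M$ becomes a $\Lg$-module $\rho\colon\Lg\to\Lg\Ll(M)$ with $\ker\rho=\C z$. I then form $\tilde{M}=M\oplus\C\xi$ of dimension $n$, defining the action on the new generator by $x\cdot\xi=f(x)\in M$ for a suitable linear map $f\colon\Lg\to M$. A direct check shows $\tilde{M}$ is a $\Lg$-module exactly when $f$ is a $1$-cocycle, i.e.\ $f([x,y])=\rho(x)f(y)-\rho(y)f(x)$, and $\tilde{M}$ is faithful iff $f(z)\neq 0$, since $\rho$ is already faithful on $\Lg\setminus\C z$.

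The entire problem therefore reduces to finding $f\in Z^1(\Lg,M)$ with $f(z)\neq 0$. Fix a vector space decomposition $\Lg=V\oplus W$ and write $z=\sum_{i<j}c_{ij}[v_i,v_j]$ in terms of a basis $v_1,\dots,v_k$ of $V$. Since $W$ is central in $\Lg$, the cocycle condition on brackets of type $[v,w]$ and $[w,w']$ with $w,w'\in W$ reduces to the commutation identities $\rho(v)f(w)=\rho(w)f(v)$ and $\rho(w)f(w')=\rho(w')f(w)$. I would first specify $f$ on $V$ freely, let the condition on $V\times V$ determine $f$ on $W$ (so that $f([v_i,v_j])=\rho(v_i)f(v_j)-\rho(v_j)f(v_i)$), and then verify that the remaining constraints on $W$ are automatically satisfied in the two-step setting because $\rho(W)$ is the commutator subspace of $\rho(V)$ in $\End(M)$. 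Plugging into the formula for $f(z)$ gives
\[
f(z)\;=\;\sum_{i<j}c_{ij}\bigl(\rho(v_i)f(v_j)-\rho(v_j)f(v_i)\bigr),
\]
which, for $f(v_l)=\delta_{l,l_0}\eta$, simplifies to $\bigl(\sum_{i}c_{i,l_0}\rho(v_i)\bigr)\eta$. Since $\rho$ is faithful on $\bar{\Lg}$ and the $v_i$ lie outside $\C z$, the operators $\rho(v_1),\dots,\rho(v_k)$ are linearly independent, so this sum can be made nonzero by choice of $l_0$ and $\eta$, whenever $z\neq 0$.

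The hardest step is verifying that the cocycle conditions involving $W$ and the consistency of $f|_W$ with all linear relations $\sum a_{ij}[v_i,v_j]=0$ in $\Lg$ can be arranged simultaneously with $f(z)\neq 0$; this is where the two-step structure (in particular $[\Lg,W]=0$) is used decisively to show that the constraint subspace of admissible $f|_V$ still contains cocycles detecting~$z$.
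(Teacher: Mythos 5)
Your reduction to the case $Z(\Lg)\subseteq[\Lg,\Lg]$ matches the paper's first step, but the core of your argument --- the existence of a cocycle $f\in Z^1(\Lg,M)$ with $f(z)\neq 0$ --- is exactly the point you defer to the end and never establish, and it is in fact false in general. Applying the cocycle identity to the bracket $[v,z]=0$ gives $0=f([v,z])=\rho(v)f(z)-\rho(z)f(v)=\rho(v)f(z)$ for every $v\in\Lg$, so any admissible $f$ must satisfy $f(z)\in M^{\Lg}$. This constraint already kills the induction at its first nontrivial step: for the Heisenberg algebra $\Lh_1=\langle x,y,z\rangle$ we have $\bar{\Lg}=\Lh_1/\C z\cong\C^2$, and a faithful module $M$ of dimension $2$ is a two-dimensional abelian subalgebra of $\Lg\Ll_2(\C)$; a short computation shows that no such subalgebra has a common kernel vector, i.e.\ $M^{\Lh_1}=0$ for every choice of $M$. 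Hence $f(z)=0$ is forced, $\tilde{M}$ is never faithful, and your construction cannot recover the known three-dimensional faithful module of $\Lh_1$ (which does arise as $M\oplus\C\xi$, but with $M$ a \emph{non-faithful} module of the central quotient --- so weakening faithfulness of $M$ destroys your inductive hypothesis). Your claim that the remaining constraints on $f|_W$ are ``automatically satisfied in the two-step setting'' is therefore not correct; the formula $f(z)=\bigl(\sum_i c_{i,l_0}\rho(v_i)\bigr)\eta$ can indeed be made nonzero, but only by violating the conditions coming from $[v,w]=0$ with $w\in W$.

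For comparison, the paper avoids central quotients altogether: it chooses a codimension-one ideal $\Ln\supseteq[\Lg,\Lg]$, writes $\Lg=\langle\ad(x_1)_{\mid\Ln}\rangle\ltimes\Ln$, and applies Proposition $\ref{algo}$ with the compatible ideal $J=Z(\Lg)=[\Lg,\Lg]$ to obtain the faithful $n$-dimensional quotient $U(\Ln)/\CL_2$, spanned by the classes of $1,x_2,\ldots,x_n$. Concretely this module is $\C\ov{1}\oplus\ov{\Ln}$, on which $y\in\Ln$ acts by $\ov{1}\mapsto\ov{y}$ and $\ov{w}\mapsto\ov{[y,w]}$, while $x_1$ acts through $\ad(x_1)$; it is an extension of an $(n-1)$-dimensional module by one dimension, but not of a faithful module of any central quotient, which is why the cohomological obstruction you run into does not arise there. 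If you want to salvage a cohomological formulation, you would need to control the transgression $H^1(\C z,M)^{\bar{\Lg}}\to H^2(\bar{\Lg},M)$ rather than assert the surjectivity of the restriction map.
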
 

\begin{proof}
We can write $\Lg=\Lg_1\oplus \Lg_2$ with $Z(\Lg_2)\subseteq [\Lg_2,\Lg_2]$ and $\Lg_1$ 
abelian. Assume that we already know that $\mu(\Lg_2)\le \dim(\Lg_2)$. Then, by lemma 
$\ref{mono}$, it follows $\mu(\Lg)\le \mu(\Lg_1)+\mu(\Lg_2)\le 
\dim (\Lg)-\dim (\Lg_2)+\mu(\Lg_2)\le \dim (\Lg)$.
Hence we may assume that  $\Lg$ satisfies $Z(\Lg)\subseteq [\Lg,\Lg]$. Let $\dim(\Lg)=n$
and choose an ideal $\Ln\subseteq \Lg$ of codimension $1$ containing the commutator
of $\Lg$. Let $x_1,\ldots ,x_n$ be a basis of $\Lg$, such that $x_2,\ldots , x_n$ span $\Ln$.
Then $\Lg=\langle x_1\rangle \oplus \Ln$ as a vector space. Let $\Ld=\langle 
\ad (x_1)_{\mid \Ln}\rangle $, and we may write $\Lg=\Ld\ltimes \Ln$. Let 
$\Ln^{[1]}\supset \Ln^{[2]} \supset 0$ be the filtration of length $C=2$ given by $\Ln^{[1]}=\Ln$ 
and $\Ln^{[2]}=Z(\Lg)=[\Lg,\Lg]$. Recall here that $\Ln\supset [\Lg,\Lg]$.
Choose $J=Z(\Lg)$ as a compatible ideal. It satifies the conditions of definition $\ref{comp}$,
since it is invariant under all derivations of $\Ld$, and it is abelian.
Note that we have $D(\Ln^{[1]})\subseteq [\Lg,\Lg]=\Ln^{[2]}$ for all $D\in \Ld$, so that
$o(D(x))\ge o(x)+1$ for all $x\in \Ln$.
Now we can apply proposition $\ref{algo}$ with these choices. We obtain a faithful module
$U(\Ln)/Z_J=U(\Ln)/\CL_2$, which has dimension $n$, since it is spanned by the classes
of $1,x_2,\ldots ,x_n$.
\end{proof}

\subsection{Filiform nilpotent Lie algebras}

We wish to apply proposition $\ref{algo}$ to filiform nilpotent Lie algebras $\Lf$ of dimension
$n$ in order to improve the known upper bounds for $\mu(\Lf)$. 
Let $\Lf^1=\Lf$ and $\Lf^i=[\Lf,\Lf^{i-1}]$.
Let $\be(\Lf)$ be the maximal dimension of an abelian ideal of $\Lf$. It is well known
that $n/2\le \be(\Lf)\le n-1$.
Denote by $p_k(j)$ the number of partitions of $j$ in which each term does not exceed $k$.
Let $p_k(0)=1$ for all $k\ge 0$ and $p_0(j)=0$ for all $j\ge 1$.

\begin{prop}\label{4.5}
Let $\Lf$ be a filiform nilpotent Lie algebra of dimension $n$ having an abelian
ideal $J$ of dimension $1\le \be\le n-1$. Then we have $\mu(\Lf)\le f(n,\be)$, where
\[
f(n,\be)= \be +\sum_{j=0}^{n-2}p_{n-1-\be}(j).
\]
\end{prop}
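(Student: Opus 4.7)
The plan is to apply Proposition~\ref{algo} to a decomposition $\Lf = \Ld \ltimes \Ln$ tailored to $J$, and then to count the dimension of the resulting faithful quotient of $U(\Ln)$.

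First, I would choose a regular element $x_1 \in \Lf$, so that $\ad(x_1)$ restricts to a single nilpotent Jordan block of size $n-1$ on a suitable codimension-one ideal $\Ln$ of $\Lf$. That $\Ln$ may be chosen to contain $J$ follows from the observation that in a filiform algebra any two elements of $\Lf$ that are linearly independent modulo $\Lf^2$ satisfy $[u,v]\not\in\Lf^3$; hence $J$ abelian forces the image of $J$ in $\Lf/\Lf^2$ to be at most one-dimensional. Complete $x_1$ to a basis $x_1,\ldots,x_n$ of $\Lf$ with $x_{i+1}=[x_1,x_i]$ for $2\le i\le n-1$. Since $J\subseteq\Ln$ is $\ad(x_1)$-invariant (being an ideal) and $\ad(x_1)_{\mid\Ln}$ is a single Jordan block, $J$ must equal the tail space $\langle x_{n-\be+1},\ldots,x_n\rangle$; this is $\Lf^{n-\be}$ when $\be\le n-2$ and $\Ln$ itself when $\be=n-1$.

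Next, set $\Ld=\langle\ad(x_1)_{\mid\Ln}\rangle$ and define the filtration $\Ln^{[k]}=\langle x_{k+1},\ldots,x_n\rangle$ for $1\le k\le n-1$, $\Ln^{[n]}=0$, of length $C=n-1$, with order function $o(x_i)=i-1$. Because $x_i\in\Lf^{i-1}$ for $i\ge 2$, one checks $[\Ln^{[i]},\Ln^{[j]}]\subseteq\Ln^{[i+j]}$, so the filtration is adapted. The ideal $J=\Ln^{[n-\be]}$ is compatible (abelian by assumption, $\Ld$-invariant as an ideal, and a term of the filtration), and $D:=\ad(x_1)_{\mid\Ln}$ satisfies $o(D(x_i))=i=o(x_i)+1$. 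Hence Proposition~\ref{algo} applies and yields a faithful finite-dimensional $\Lf$-module $U(\Ln)/Z_J$.

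It remains to count $\dim U(\Ln)/Z_J$. Using the PBW basis of standard monomials $X_2^{a_2}\cdots X_n^{a_n}$, the submodule $Z_J=W_J+V_C\cap\CL_2$ is spanned by those standard monomials of length $\ge 2$ which either contain a factor from $\{X_{n-\be+1},\ldots,X_n\}$ or have order $\ge n-1$. A basis for the quotient therefore consists of the cosets of (a) all $n$ standard monomials of length $\le 1$, namely $1,X_2,\ldots,X_n$, together with (b) standard monomials of length $\ge 2$ that involve only the $n-1-\be$ variables $X_2,\ldots,X_{n-\be}$ and have order strictly less than $n-1$. Since $o(X_i)=i-1$ for these variables, monomials in them of a fixed order $j$ correspond bijectively to partitions of $j$ into parts of size at most $n-1-\be$; the total number with order at most $n-2$ is $\sum_{j=0}^{n-2}p_{n-1-\be}(j)$, of which exactly $n-\be$ have length $\le 1$. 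Summing the two contributions gives
\[
\dim U(\Ln)/Z_J \;=\; n + \sum_{j=0}^{n-2}p_{n-1-\be}(j) - (n-\be) \;=\; \be + \sum_{j=0}^{n-2}p_{n-1-\be}(j) \;=\; f(n,\be),
\]
proving the bound. I expect the main obstacle to lie in the structural first step: justifying that a regular $x_1$ can be chosen outside the prescribed codimension-one ideal $\Ln$, and that the single-Jordan-block structure of $\ad(x_1)_{\mid\Ln}$ actually forces $J$ into the required tail form. After the basis is in place, the hypotheses of Proposition~\ref{algo} and the PBW counting are essentially mechanical.
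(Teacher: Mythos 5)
Your proposal is correct and follows essentially the same route as the paper: decompose $\Lf=\Ld\ltimes\Ln$ with $\Ld=\langle\ad(x_1)_{\mid\Ln}\rangle$, filter $\Ln$ by the lower central series, take $J$ as the compatible ideal, apply Proposition \ref{algo}, and count the surviving PBW monomials via restricted partitions; your count agrees with the paper's after the reconciliation $n+\sum_{j}p_{n-1-\be}(j)-(n-\be)=\be+\sum_{j}p_{n-1-\be}(j)$. The only difference is that you spell out why $J$ sits in tail position (abelian forces its image in $\Lf/\Lf^2$ to be at most one-dimensional, and invariant subspaces of a single Jordan block are tail spaces), where the paper simply invokes a Vergne adapted basis and asserts $J=\langle x_m,\ldots,x_n\rangle$.
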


\begin{proof}
Let $x_1,\ldots , x_n$ be an adapted basis of $\Lf$ in the sense of \cite{VER}. 
Then choose $\Ln=\langle x_2,\ldots ,x_n \rangle$ and
$\Ld=\langle \ad(x_1)_{\mid \Ln}\rangle$, so that $\Lf= \Ld\ltimes \Ln$. Define a filtration
$\Ln^{[1]}\supset \Ln^{[2]} \cdots \supset \Ln^{[C]}\supset 0$
of length $C=n-1$ by $\Ln^{[1]}=\Ln$ and $\Ln^{[i]}=\Lf^i$ for $i\ge 2$. 
We may write $J=\langle x_m,\ldots ,x_n\rangle$ with $m\ge 2$ and $n-m+1=\be$. 
It is easy to see that $J$ is a compatible ideal in the sense of definition $\ref{comp}$. 
Furthermore we have
$o(D(x))\ge o(x)+1$ for all $x\in \Ln$ and all $D\in \Ld$. Now we can apply proposition
$\ref{algo}$. We obtain a faithful module $U(\Ln)/Z_J$. We will show that its dimension is
$\be +\sum_{j=0}^{n-2}p_{n-1-\be}(j)$. It is generated by the classes 
\[
\{\ov{X_m},\ldots ,\ov{X_n}\}\;
\cup
\; \{\ov{X^{\al}}=\ov{X_2^{\al_2}\cdots X_{m-1}^{\al_{m-1}}}\mid o(X^{\al})\le n-2 \}.
\]
There are $\be$ monomials in the first set. The cardinality of the second set is given by
\begin{eqnarray*}
 & & \# \{ (\al_2,\ldots ,\al_{m-1})\in \Z_{\ge 0}^{m-2} \mid 1\cdot \al_2+2\cdot \al_3+\cdots 
+ (m-2)\cdot \al_{m-1}\le n-2 \}\\
 & = & \sum_{j=0}^{n-2} \# \{ (\al_2,\ldots ,\al_{m-1}) \mid  1\cdot \al_2+2\cdot \al_3+\cdots 
+ (m-2)\cdot \al_{m-1}=j \}\\ 
 & = & \sum_{j=0}^{n-2} p_{m-2}(j).
\end{eqnarray*}
Since $m-2=n-1-\be$ we obtain the required dimension.
\end{proof}

Note that for $\be=1$ we obtain the bound from  \cite{BU7}:
\[
\mu(\Lf)\le f(n,1)=1+\sum_{j=0}^{n-2}p(j)<1+e^{\sqrt{2\pi(n-1)/3}}.
\]
Here $p(j)$ denotes the unrestricted partition function, and  $p(0)=1$.
The following result shows that our bound from the above
proposition yields an improvement.

\begin{prop}
Let $n\ge 3$. Then $f(n,\be)$ is monotonic in $\be$, i.e., it holds
\[
f(n,n-1)\le f(n,n-2)\le \cdots \le f(n,2)=f(n,1),
\]
with equality for $\be=1$ and $\be=2$.
\end{prop}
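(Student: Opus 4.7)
The plan is to analyze the successive differences $f(n,\be)-f(n,\be+1)$ for $\be=1,2,\ldots,n-2$ and show each is non-negative, with equality only when $\be=1$. Writing out the definition gives
\[
f(n,\be)-f(n,\be+1) = -1 + \sum_{j=0}^{n-2}\bigl[p_{n-1-\be}(j)-p_{n-2-\be}(j)\bigr],
\]
so the whole problem reduces to a clean combinatorial identity for the restricted partition functions $p_k(j)$.

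The key combinatorial lemma I would establish is the standard telescoping identity
\[
p_k(j)-p_{k-1}(j) =
\begin{cases} p_k(j-k) & \text{if } j\ge k, \\ 0 & \text{if } j<k. \end{cases}
\]
This is immediate from the bijection that takes a partition of $j$ with largest part \emph{exactly} $k$ to the partition of $j-k$ with all parts $\le k$ obtained by removing one part equal to $k$; partitions of $j$ with largest part strictly less than $k$ use only parts $\le k-1$ and so contribute $0$. Setting $k=n-1-\be$ and reindexing $i=j-k$, the sum above collapses to
\[
f(n,\be)-f(n,\be+1) \;=\; -1 + \sum_{i=0}^{\be-1} p_{n-1-\be}(i),
\]
where the upper limit comes from $n-2-k=\be-1$.

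Since $p_k(0)=1$ for every $k\ge 0$, the sum on the right is at least $1$, so $f(n,\be)-f(n,\be+1)\ge 0$ and the claimed chain of inequalities $f(n,n-1)\le \cdots \le f(n,1)$ follows. For the equality case $f(n,1)=f(n,2)$, the sum reduces to the single term $p_{n-2}(0)=1$, giving difference $0$; this is the unique $\be$ for which the sum collapses to one term, so it is also the only case of equality in the chain (beyond $n=3$, where the sum for $\be=2$ is $p_0(0)+p_0(1)=1+0=1$ and again yields equality, matching the stated $f(n,2)=f(n,1)$). No serious obstacle is expected: the only delicate point is correctly handling the boundary values $p_0(0)=1$ and $p_0(j)=0$ for $j\ge 1$ when reindexing, which is precisely what makes the identity valid at the small-$k$ extreme.
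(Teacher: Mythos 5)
Your proof is correct. The paper itself gives no argument here (it states ``The proof is easy, and we leave it to the reader''), so there is nothing to compare against; your write-up supplies exactly the kind of argument the authors had in mind. The recurrence $p_k(j)=p_{k-1}(j)+p_k(j-k)$ for $j\ge k\ge 1$ is the standard one, and your reindexing correctly yields
\[
f(n,\be)-f(n,\be+1)=-1+\sum_{i=0}^{\be-1}p_{n-1-\be}(i),
\]
which is $\ge 0$ because the $i=0$ term equals $1$, and equals $0$ precisely when $\be=1$ (and, degenerately, in the range-boundary situations). Note that throughout the relevant range $1\le\be\le n-2$ one has $k=n-1-\be\ge 1$, so the recurrence never needs to be invoked with $k=0$; you handle this implicitly and it is worth stating. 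The formula is consistent with the paper's explicit values: for $\be=n-2$ it gives $f(n,n-2)-f(n,n-1)=-1+\sum_{i=0}^{n-3}p_1(i)=n-3$, matching $(2n-3)-n$. The only blemish is the parenthetical about $n=3$ and $\be=2$: there the comparison would involve $f(3,3)$, which lies outside the admissible range $\be\le n-1$, so that case is vacuous rather than an additional instance of equality; it does not affect the validity of the stated chain.
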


The proof is easy, and we leave it to the reader. We can also determine
$f(n,\be)$ explicitly for large $\be$:

\begin{prop}
Let $n\ge 4$. Then it holds
\begin{align*}
f(n,n-1) & = n, \\
f(n,n-2) & = 2n-3, \\
f(n,n-3) & = \frac{n^2+3n-12+2\lfloor n/2\rfloor}{4}.
\end{align*}
\end{prop}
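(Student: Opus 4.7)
My plan is to prove the three identities by direct substitution into the definition $f(n,\be) = \be + \sum_{j=0}^{n-2} p_{n-1-\be}(j)$, using the explicit small-parameter values of $p_k(j)$ for $k = n-1-\be \in \{0,1,2\}$. The result is essentially a bookkeeping computation; no combinatorial cleverness is needed.

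The first two cases are immediate. For $\be = n-1$, so $k=0$, the stated conventions give $p_0(0)=1$ and $p_0(j)=0$ for $j\ge 1$, so the sum collapses to $1$ and $f(n,n-1) = n$. For $\be = n-2$, so $k=1$, the only partition of $j$ with parts of size at most $1$ is $j$ copies of $1$, hence $p_1(j)=1$ for every $j\ge 0$; this gives $\sum_{j=0}^{n-2} p_1(j) = n-1$, and therefore $f(n,n-2)=2n-3$.

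The main work lies in the third identity, with $\be = n-3$ and $k=2$. I will use the elementary observation that $p_2(j) = \lf j/2\rf + 1$, since a partition of $j$ into parts of size at most $2$ is determined by the number of $2$s used, which ranges from $0$ to $\lf j/2\rf$. This decomposition yields
\[
\sum_{j=0}^{n-2} p_2(j) = (n-1) + \sum_{j=0}^{n-2} \lf j/2\rf,
\]
and the remaining sum $S := \sum_{j=0}^{n-2} \lf j/2\rf$ is evaluated by splitting on the parity of $n$: pairing consecutive terms of the form $2r, 2r+1$ gives $S = (n-2)^2/4$ when $n$ is even and $S = (n-1)(n-3)/4$ when $n$ is odd. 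Both formulas unify as $S = (n^2 - 5n + 4 + 2\lf n/2\rf)/4$, which one verifies by checking each parity separately (with $2\lf n/2\rf$ equal to $n$ or $n-1$ accordingly).

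Substituting back, $f(n,n-3) = (n-3) + (n-1) + S = (2n-4) + S$, and a routine algebraic simplification produces the advertised formula $(n^2+3n-12+2\lf n/2\rf)/4$. The only mildly delicate step is the parity-free unification of $S$; everything else is mechanical substitution, and since the bounds $n \ge 4$ ensure $n-2 \ge 2$ so that all sums involved are non-degenerate, no edge cases need separate treatment.
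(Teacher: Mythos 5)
Your computation is correct and complete: the values $p_0(j)$, $p_1(j)=1$, and $p_2(j)=\lfloor j/2\rfloor+1$ are right, the parity split for $\sum_{j=0}^{n-2}\lfloor j/2\rfloor$ checks out, and the unification via $2\lfloor n/2\rfloor$ recovers the stated formula. The paper states this proposition without proof, and your direct substitution is exactly the routine verification it intends.
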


If $\be=n-1$, then $\be=\be(\Lf)$, and $\Lf$ is the standard graded filiform
Lie algebra. Then the bound $\mu(\Lf)\le f(n,n-1)=n$ is optimal, since we already know that 
$\mu(\Lf)=n$ in this case. See also example $\ref{3.9}$ for the case $n=4$.

\begin{rem}
It is also easy to show that 
\[
f(n,\be)\le \be+\frac{(2n-\be-3)^{n-\be-1}}{(n-\be-1)!}
\]
for all $n\ge 3$ and all $1\le \be \le n-1$.
\end{rem}

%\begin{ex}
%If $\be=n-3$ then we obtain
%\[
%\mu(\Lf)\le n-3+\sum_{j=0}^{n-2}p_2(j)=\begin{cases} \frac{n^2+4n-12}{4}, \quad n\equiv 0 
%\mod 2\\[0.2cm]
%\frac{n^2+4n-13}{4}, \quad n\equiv  1 \mod 2  \end{cases}
%\]
%\end{ex}

We can also derive a bound on $\mu(\Lf)$ which only depends on $n$.
For this we take the smallest possible $\be=\be(\Lf)$ in terms of $n$, which is 
given by  $\be=\lceil n/2 \rceil$. 
Then $n-1-\be=\lfloor n/2 \rfloor -1$, and we obtain the following result:

\begin{cor}
Let $\Lf$ be a filiform nilpotent Lie algebra of dimension $n\ge 3$. Then
\[
\mu(\Lf)\le n-1 +\sum_{j=0}^{n-2}p_{\left\lfloor \frac{n}{2} 
\right\rfloor -1}(j).
\]
\end{cor}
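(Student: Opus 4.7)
The plan is to specialize Proposition~\ref{4.5} to the maximal abelian ideal of $\Lf$ and then use the monotonicity of the auxiliary function $f$ to pass to a bound that depends only on $n$. The essential external input is the classical fact, already quoted in the text, that every filiform nilpotent Lie algebra $\Lf$ of dimension $n$ satisfies $\be(\Lf) \ge n/2$, so in particular $\be(\Lf) \ge \lceil n/2 \rceil$. Thus $\Lf$ carries an abelian ideal of dimension $\be(\Lf)$, and Proposition~\ref{4.5} applies to give $\mu(\Lf) \le f(n, \be(\Lf))$.

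Next, I would invoke Proposition~\ref{4.6}: the function $f(n, \cdot)$ is non-increasing in its second argument, so the worst case over all filiform $\Lf$ of dimension $n$ is attained when $\be(\Lf)$ takes its smallest guaranteed value, namely $\be = \lceil n/2 \rceil$. Combining the two inequalities yields
\[
\mu(\Lf) \le f(n, \be(\Lf)) \le f\bigl(n, \lceil n/2 \rceil\bigr).
\]
Unfolding the definition $f(n, \be) = \be + \sum_{j=0}^{n-2} p_{n-1-\be}(j)$ together with the elementary identity $n - 1 - \lceil n/2 \rceil = \lfloor n/2 \rfloor - 1$ (a direct consequence of $\lceil n/2 \rceil + \lfloor n/2 \rfloor = n$) then gives
\[
f\bigl(n, \lceil n/2 \rceil\bigr) = \lceil n/2 \rceil + \sum_{j=0}^{n-2} p_{\lfloor n/2 \rfloor - 1}(j) \le (n-1) + \sum_{j=0}^{n-2} p_{\lfloor n/2 \rfloor - 1}(j),
\]
where the final step uses $\lceil n/2 \rceil \le n - 1$, valid for every $n \ge 3$.

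There is no serious obstacle: the argument is a direct two-step combination of the preceding proposition and its monotonicity refinement, plus a trivial arithmetic identity, and none of it requires revisiting the universal-enveloping-algebra machinery of Section~3. The only loss in tightness is the final bookkeeping inequality $\lceil n/2 \rceil \le n-1$, which suggests that $f(n,\lceil n/2 \rceil)$ is actually the more natural way to phrase the conclusion.
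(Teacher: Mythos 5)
Your argument is correct and is exactly the paper's route: apply Proposition \ref{4.5} with the maximal abelian ideal, use $\be(\Lf)\ge\lceil n/2\rceil$ together with the monotonicity of $f(n,\cdot)$, and note $n-1-\lceil n/2\rceil=\lfloor n/2\rfloor-1$. Your closing remark is also apt — the stated corollary weakens $\lceil n/2\rceil$ to $n-1$ in the first summand, so $f(n,\lceil n/2\rceil)$ is indeed the sharper form of the bound that the argument actually delivers.
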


\newpage

\subsection{Filiform Lie algebras of dimension 10}
We may represent all complex filiform Lie algebras of dimension $10$ with respect to
an adapted basis $(x_1,\ldots ,x_{10})$ as a family of Lie algebras $\Lf=\Lf(\al_1,\ldots ,\al_{13})$,
with $13$ parameters satisfying the following polynomial equations:
\begin{align*}
\al_{11}(2\al_1+\al_7)-3\al_7^2 & = 0, \\
\al_{13}(2\al_1-\al_7-\al_{11}) & = 0, \\
\al_{13}(2\al_3+\al_9)-\al_{12}(2\al_1+\al_7) & = 3\al_{11}(\al_2+\al_8)-7\al_7\al_8.
\end{align*}
We call the parameters {\it admissible}, if they define a Lie algebra, i.e., if they satisfy 
these equations. Note that we obtain other equations as consequences, such as
\[
\al_{13}(\al_1^2-\al_7^2)=0.
\]
The explicit Lie brackets are given as follows:
\begin{align*}
[x_1,x_i] & =x_{i+1}, \; 2\le i\le 9 \\
[x_2,x_3] & = \al_1x_5+\al_2x_6+\al_3x_7 +\al_4x_8+\al_5x_9+\al_6x_{10}\\
[x_2,x_4] & = \al_1x_6+\al_2x_7+\al_3x_8+\al_4x_9 +\al_5x_{10}\\
[x_2,x_5] & = (\al_1-\al_7)x_7+ (\al_2-\al_8)x_8+
(\al_3-\al_9)x_9 +(\al_4-\al_{10})x_{10}\\
[x_2,x_6] & = (\al_1-2\al_7)x_8+ (\al_2-2\al_8)x_9 +(\al_3-2\al_9)x_{10}\\
[x_2,x_7] & = (\al_1-3\al_7+\al_{11})x_9 +(\al_2-3\al_8+\al_{12})x_{10}\\
[x_2,x_8] & = (\al_1 - 4\al_7 + 3\al_{11})x_{10}\\
[x_2,x_9] & = -\al_{13}x_{10}\\
[x_3,x_4] & = \al_7x_7+\al_8x_8+\al_9x_9+\al_{10}x_{10} \\
[x_3,x_5] & = \al_7x_8+\al_8x_9+\al_9x_{10} \\
[x_3,x_6] & = (\al_7-\al_{11})x_9 +(\al_8-\al_{12})x_{10}\\
[x_3,x_7] & = (\al_7-2\al_{11})x_{10}\\
[x_3,x_8] & = \al_{13}x_{10}\\
[x_4,x_5] & = \al_{11}x_9 + \al_{12}x_{10}\\
[x_4,x_6] & = \al_{11}x_{10}\\
[x_4,x_7] & = -\al_{13}x_{10}\\
[x_5,x_6] & = \al_{13}x_{10}
\end{align*}

We want to determine as good as possible upper bounds on $\mu(\Lf)$, for all Lie algebras
$\Lf=\Lf(\al_1,\ldots ,\al_{13})$. The results will depend on the parameters,
and we have to introduce a case distinction. For each case we choose a particular
construction which yields a faithful $\Lf$-module $V$ of some dimension $10\le \dim (V)\le 18$.
This improves the known bound $10\le \mu(\Lf)\le 22$ from \cite{BU5} for such Lie algebras.
We can also construct a faithful $\Lf$-module $V=V(\al_1,\ldots ,\al_{13})$, which
does not depend on a case distinction for the parameters. In other words, such a module
gives an upper bound on  $\mu(\Lf)$ for all admissible parameters at the same time.
We call such a module a {\it general} $\Lf$-module. We will give such a module explicitly.

\begin{prop}\label{4.10}
There is a general faithful $\Lf$-module $V_{58}=V_{58}(\al_1,\ldots ,\al_{13})$ of 
dimension $58$. 
\end{prop}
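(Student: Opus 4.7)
The plan is to apply Proposition $\ref{4.5}$ uniformly, using the same decomposition, filtration, and compatible ideal for every admissible parameter vector $(\al_1, \ldots, \al_{13})$. I would write $\Lf = \Ld \ltimes \Ln$ with $\Ln = \langle x_2, \ldots, x_{10}\rangle$ and $\Ld = \langle \ad(x_1)_{\mid \Ln}\rangle$, equip $\Ln$ with the adapted filtration $\Ln^{[1]} = \Ln$ and $\Ln^{[i]} = \Lf^i$ for $2 \le i \le 9$, and take the candidate compatible ideal to be
\[
J = \langle x_6, x_7, x_8, x_9, x_{10}\rangle = \Lf^5,
\]
of dimension $\be = 5$. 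All of this data is intrinsic to the adapted basis and to the filiform structure, and hence does not depend on the parameters.

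The one non-trivial point to verify is that $J$ is abelian for every choice of admissible parameters. Inspection of the bracket table shows that no bracket $[x_i, x_j]$ with $6 \le i < j \le 10$ appears; every listed nonzero bracket has at least one factor $x_k$ with $k \le 5$. By contrast, $[x_5, x_6] = \al_{13}x_{10}$ is generically nonzero, which is what prevents us from enlarging $J$ further and forces $\be = 5$ in a uniform construction. That $J$ is an ideal of $\Ln$ with $D(J) \subseteq J$ for all $D \in \Ld$ is immediate since $J = \Lf^5$ is a characteristic term of the lower central series of $\Lf$, and the order condition $o(D(x)) \ge o(x) + 1$ for $D = \ad(x_1)_{\mid \Ln}$ follows from $[x_1, \Lf^i] \subseteq \Lf^{i+1}$. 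Thus all hypotheses of Proposition $\ref{4.5}$ are satisfied simultaneously for every admissible parameter vector.

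Invoking Proposition $\ref{4.5}$ with $n = 10$ and $\be = 5$ then yields a faithful $\Lf$-module $V_{58} = U(\Ln)/Z_J$ of dimension
\[
f(10, 5) = 5 + \sum_{j=0}^{8} p_4(j) = 5 + (1+1+2+3+5+6+9+11+15) = 58.
\]
Since every ingredient of the construction is combinatorial and depends on the parameters only through the Lie brackets of $\Ln$ (and not through any case distinction), $V_{58} = V_{58}(\al_1, \ldots, \al_{13})$ is a general faithful $\Lf$-module in the sense defined above. The main obstacle is the routine but tedious verification that $J$ is abelian for \emph{all} admissible parameter choices; once the structure constants are laid out, this reduces to noting that no bracket among $x_6, \ldots, x_{10}$ is listed in the table.
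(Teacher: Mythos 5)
Your proposal is correct and follows essentially the same route as the paper: the paper's proof also invokes Proposition \ref{4.5} with the compatible ideal $J=\langle x_6,\ldots ,x_{10}\rangle$, giving $\be=5$ and $f(10,5)=5+(1+1+2+3+5+6+9+11+15)=58$, with the verification that $J$ is abelian for all admissible parameters being exactly the point you single out. The only difference is that the paper goes on to list the resulting basis of monomials and the explicit action of $x_1$ and $x_2$, which your argument does not need for the existence statement.
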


\begin{proof}
The faithful $\Lf$-module $V_{58}$ is obtained by proposition $\ref{4.5}$ as follows. 
Take $J=\langle x_6,\ldots ,x_{10}\rangle$ as compatible 
ideal. This means $\be=5$ and the construction yields a module with a basis consisting of 
$f(10,5)=58$ monomials. The computation of $f(10,5)$ uses $(p_4(0),\cdots ,p_4(8))=(1,1,2,3,5,6,9,11,15)$.
The basis consists of the following standard mononials, writing $x_i$ for $\ov{X_i}$.
\[
\begin{array}{c|c}
\text{order} & \text{monomials} \\
\hline 
0 & 1,\\
1 & x_2,\\
2 & x_3, x_2^2,\\
3 & x_4, x_2x_3, x_2^3,\\
4 & x_5, x_2x_4, x_3^2, x_2^2x_3, x_2^4,\\
5 & x_6, x_3x_4, x_2x_5, x_2^2x_4, x_2x_3^2x_3,x_2^3x_3,x_2^5,\\
6 & x_7, x_4^2, x_3x_5, x_2x_3x_4, x_3^3, x_2^2x_5,x_2^3x_4, x_2^2x_3^2, x_2^4x_3,x_2^6,\\
7 & x_8, x_4x_5, x_2x_4^2, x_3^2x_4, x_2x_3x_5,x_2^2x_3x_4, x_2x_3^3, x_2^3x_5,x_2^4x_4, x_2^3x_3^2,
  x_2^5x_3,x_2^7,\\
8& x_9, x_3x_4^2, x_5^2, x_2x_4x_5, x_3^2x_5,x_2^2x_4^2, x_2x_3^2x_4, x_3^4,x_2^2x_3x_5, x_2^3x_3x_4, \\
& x_2^2x_3^3, x_2^4x_5,x_2^5x_4, x_2^4x_3^2,x_2^6x_3,x_2^8,\\
9 & x_{10}. 
\end{array}
\]
Denote this basis by $v_1,\ldots ,v_{58}$, ordered lexicographically.
Note that $v_{58}=x_{10}$ generates the center of $\Lf$.
The module is determined by the action of the generators $x_1$ and $x_2$ of
the Lie algebra $\Lf=\Lf(\al_1,\ldots ,\al_{13})$. It is given by
\begin{align*}
x_1.v_1 & = 0,\\
x_1.v_2 & = v_3,\\
x_1.v_3 & = v_5,\\
x_1.v_4 & = 2v_6-\al_1v_8-\al_2v_{13}-\al_3v_{20}-\al_4v_{30}-\al_5v_{42}-\al_6v_{58},\\
x_1.v_5 & = v_8,\\
x_1.v_6 & = v_9+v_{10},\\
x_1.v_7 & = 3v_{11}-3\al_1v_{15}+\al_1(\al_1-\al_7)v_{20}+(2\al_1\al_2-2\al_2\al_7-\al_1\al_8)v_{30}\\
        & +(2\al_1\al_3 - \al_1\al_9 + \al_{11}\al_3 + \al_2^2 - 2\al_2\al_8 - 3\al_3\al_7)v_{42}
+ ( 2\al_1\al_4 - \al_1\al_{10}\\
        &  + 3\al_{11}\al_4 + \al_{12}\al_3 - \al_{13}\al_5 
           + 2\al_2\al_3 - 2\al_2\al_9 - 3\al_3\al_8 - 4\al_4\al_7)v_{58},
\end{align*}

\begin{align*}
x_1.v_8 & = v_{13},\\
x_1.v_9 & = v_{14} + v_{15},\\
x_1.v_{10} & = 2v_{14} - \al_7v_{20} - \al_8v_{30} - \al_9v_{42} - \al_{10}v_{58},\\
x_1.v_{11} & = v_{16} + 2v_{17} - \al_1v_{22} + \al_1\al_7v_{30} + (\al_1\al_8 - \al_{11}\al_2 
+ \al_2\al_7)v_{42}\\
          & + (\al_1\al_9 - 2\al_{11}\al_3 - \al_{12}\al_2 + \al_{13}\al_4 + \al_2\al_8 + \al_3\al_7)v_{58},\\
x_1.v_{12} & = 4v_{18} - 6\al_1v_{25} + \al_1(4\al_1\al_7 - \al_1^2 - 3\al_1\al_{11})v_{42}\\
          & + (4\al_1^2\al_8 - \al_1^2\al_{12} - 3\al_1^2\al_2 - 6\al_1\al_{11}\al_2 
+ 3\al_1\al_{11}\al_8 + \al_1\al_{12}\al_7 + 2\al_1\al_{13}\al_3 \\
          & - \al_1\al_{13}\al_9 + 11\al_1\al_2\al_7 - 7\al_1\al_7\al_8 
+ \al_{11}\al_{13}\al_3 + 6\al_{11}\al_2\al_7 + \al_{13}\al_2^2\\
          & - 2\al_{13}\al_2\al_8 - 3\al_{13}\al_3\al_7 - 8\al_2\al_7^2)v_{58},\\
x_1.v_{13} & = v_{20},\\
x_1.v_{14} & = v_{21} + v_{22},\\
x_1.v_{15} & = v_{22}, \\
x_1.v_{16} & = 2v_{23} + v_{25} - \al_1v_{31} + \al_1\al_{11}v_{42} + (\al_1\al_{12} 
+ \al_{11}\al_2 - \al_{13}\al_3)v_{58},\\
x_1.v_{17} & = 2v_{23} + v_{24},\\
x_1.v_{18} & = v_{26} + 3v_{27} - 3\al_1v_{34} + (2\al_1^2\al_{11} - \al_1^2\al_7 - 2\al_1\al_{11}\al_7 
- 2\al_1\al_{13}\al_2 \\
          & + \al_1\al_{13}\al_8 + \al_1\al_7^2 + 2\al_{13}\al_2\al_7)v_{58},\\
x_1.v_{19} & = 5v_{28} - 10\al_1v_{37} + \al_1\al_{13}(4\al_1\al_7  - \al_1^2 - 3\al_1\al_{11})v_{58},\\
x_1.v_{20} & = v_{30}, \\
x_1.v_{21} & = 2v_{31} - \al_{11}v_{42} - \al_{12}v_{58},\\
x_1.v_{22} & = v_{31},\\
x_1.v_{23} & = v_{32} + v_{33} + v_{34},\\
x_1.v_{24} & = 3v_{33} + (\al_7^2 - 2\al_{11}\al_7 + \al_{13}\al_8)v_{58},\\
x_1.v_{25} & = 2v_{34} - \al_1v_{44} + \al_{13}\al_2v_{58},\\
x_1.v_{26} & = 3v_{35} + v_{37} - 3\al_1v_{45} + \al_1\al_{13}(\al_1 - \al_7)v_{58},\\
x_1.v_{27} & = 2v_{35} + 2v_{36} - \al_1v_{46} - \al_1\al_{13}\al_7v_{58},\\
x_1.v_{28} & = v_{38} + 4v_{39} - 6\al_1v_{50},\\
x_1.v_{29} & = 6v_{40} - 15\al_1v_{53},\\
x_1.v_{30} & = v_{42},\\
x_1.v_{31} & = v_{44},\\
x_1.v_{32} & = v_{43} + 2v_{45},\\
x_1.v_{33} & = 2v_{43} + v_{46} - \al_{13}\al_7v_{58},\\
x_1.v_{34} & = v_{45} + v_{46},\\
x_1.v_{35} & = v_{47} + 2v_{48} + v_{50},
\end{align*}

\begin{align*}
x_1.v_{36} & = 3v_{48} + v_{49},\\
x_1.v_{37} & = 3v_{50},\\
x_1.v_{38} & = 4v_{51} + v_{53},\\
x_1.v_{39} & = 2v_{51} + 3v_{52},\\
x_1.v_{40} & = v_{54} + 5v_{55},\\
x_1.v_{41} & = 7v_{56}, \\
x_1.v_{42} & = v_{58}, \\
x_1.v_{43} & = 0,\\
x_1.v_{44} & =- \al_{13}v_{58},\\ 
x_1.v_{45} & = \cdots = x_1.v_{58}=0. 
\end{align*}

\begin{align*}
x_2.v_{1} & = v_{2}, \quad x_2.v_{2} =v_{4}, \quad x_2.v_{3} =v_6, \quad  x_2.v_{4} =v_{7}, 
\quad x_2.v_5 = v_9,  \\
x_2.v_{6} & = v_{11}, \quad x_2.v_{7} =v_{12}, \quad x_2.v_{8} =v_{15}, \quad  x_2.v_{9} =v_{16}, 
\quad x_2.v_{10} = v_{17},  \\
x_2.v_{11} & = v_{18}, \quad x_2.v_{12} =v_{19}, \quad x_2.v_{13} =0, \quad  x_2.v_{14} =v_{23}, 
\quad x_2.v_{15} = v_{25},  \\
x_2.v_{16} & = v_{26}, \quad x_2.v_{17} =v_{27}, \quad x_2.v_{18} =v_{28}, \quad  x_2.v_{19} =v_{29}, 
\quad x_2.v_{20} = 0,  \\
x_2.v_{21} & = v_{32}, \quad x_2.v_{22} =v_{34}, \quad x_2.v_{23} =v_{35}, \quad  x_2.v_{24} =v_{36}, 
\quad x_2.v_{25} = v_{37},  \\
x_2.v_{26} & = v_{38}, \quad x_2.v_{27} =v_{39}, \quad x_2.v_{28} =v_{40}, \quad  x_2.v_{29} =v_{41}, 
\quad x_2.v_{30} = 0,  \\
x_2.v_{31} & = v_{45}, \quad x_2.v_{32} =v_{47}, \quad x_2.v_{33} =v_{48}, \quad  x_2.v_{34} =v_{50}, 
\quad x_2.v_{35} = v_{51},  \\
x_2.v_{36} & = v_{52}, \quad x_2.v_{37} =v_{53}, \quad x_2.v_{38} =v_{54}, \quad  x_2.v_{39} =v_{55}, 
\quad x_2.v_{40} = v_{56},  \\
x_2.v_{41} & = v_{57}, \quad x_2.v_{42} = \cdots = x_2.v_{58}=0.
\end{align*}
\end{proof}

\begin{cor}\label{quoti}
There is a general faithful $\Lf$-module $V_{20}=V_{20}(\al_1,\ldots ,\al_{13})$ of 
dimension $20$. 
\end{cor}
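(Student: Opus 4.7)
The plan is to realize $V_{20}$ as a uniformly-chosen quotient of the general faithful module $V_{58}$ constructed in Proposition~\ref{4.10}. Concretely, I would exhibit an $\Lf$-submodule $W\subset V_{58}$ of dimension $38$, depending only on the chosen PBW basis and not on the admissible parameters $(\al_1,\ldots,\al_{13})$, with the property that $v_{58}\notin W$. Since $v_{58}$ is the image of $X_{10}$ in $V_{58}$ and the center $Z(\Lf)=\langle x_{10}\rangle$ acts by $x_{10}\cdot v_1=v_{58}$, the condition $v_{58}\notin W$ ensures that $x_{10}$ still acts nontrivially on $V_{20}:=V_{58}/W$; for a nilpotent Lie algebra this is the standard criterion for faithfulness, used already in the proofs of Lemma~$2.7$ and Proposition~\ref{algo}.

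To construct $W$, I would proceed by iterated saturation against the action tables recorded in the proof of Proposition~\ref{4.10}. Initialize with the trivial submodule $W_0:=\langle v_{43},v_{45},v_{46},\ldots,v_{57}\rangle$, which has dimension $14$ and consists entirely of basis vectors annihilated by both $x_1$ and $x_2$; by construction $v_{58}\notin W_0$. At each subsequent step, enlarge $W_k$ by adjoining every basis vector $v_i$ for which both $x_1\cdot v_i$ and $x_2\cdot v_i$, as read from the tables, already lie in $W_k$ for every admissible parameter. At the first step this adds $v_{32},v_{34},v_{35},\ldots,v_{41}$; at the second step further vectors such as $v_{28}$ and $v_{29}$ can be included; and the procedure continues mechanically.

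The main obstacle is the presence of parameter-dependent $v_{58}$-contributions in many of the action entries, e.g.\ $x_1\cdot v_{33}=2v_{43}+v_{46}-\al_{13}\al_7v_{58}$, $x_1\cdot v_{44}=-\al_{13}v_{58}$, and $x_1\cdot v_{25}=2v_{34}-\al_1v_{44}+\al_{13}\al_2v_{58}$; such vectors cannot be included in $W$ for generic parameters without destroying the nonvanishing of $\overline{v_{58}}$. The nontrivial bookkeeping is to show that, after excluding these "poisoned" vectors and using the admissibility relations $\al_{11}(2\al_1+\al_7)-3\al_7^2=0$, $\al_{13}(2\al_1-\al_7-\al_{11})=0$, and $\al_{13}(2\al_3+\al_9)-\al_{12}(2\al_1+\al_7)=3\al_{11}(\al_2+\al_8)-7\al_7\al_8$ together with their consequences such as $\al_{13}(\al_1^2-\al_7^2)=0$ to simplify what remains, the saturation stabilizes at a submodule of dimension exactly $38$.

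Once such a $W$ has been pinned down, the induced actions of the generators $x_1$ and $x_2$ on the $20$ surviving coset representatives are obtained by reducing every expression in the tables of Proposition~\ref{4.10} modulo $W$ term by term. Faithfulness is then immediate, since $\overline{v_{58}}=x_{10}\cdot\overline{v_1}$ is nonzero by construction; this exhibits the general faithful $\Lf$-module $V_{20}$ of dimension $20$ and completes the proof.
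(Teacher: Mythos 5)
Your overall strategy---quotient $V_{58}$ by a $38$-dimensional submodule $W$ with $v_{58}\notin W$ and invoke the center criterion for faithfulness of a nilpotent module---is the right shape, and it is essentially what the paper does (via the iterated \textsf{Quotient} algorithm producing $V_{58}\to V_{43}\to V_{32}\to V_{23}\to V_{20}$). But there is a genuine gap in your construction of $W$: you insist that $W$ be spanned by basis vectors and be independent of the parameters, and under that restriction your saturation does not reach dimension $38$. The vectors you call poisoned block everything above them: since $x_1\cdot v_{44}=-\al_{13}v_{58}$, you can never adjoin $v_{44}$, hence never $v_{31}$ (as $x_1\cdot v_{31}=v_{44}$), hence never $v_{22}$, $v_{21}$, $v_{14}$, and so on; similarly $v_{33}$ blocks $v_{23}$, $v_{24}$, $v_{17}$, etc. Running your procedure on the tables of Proposition~\ref{4.10} gives $W_0$ of dimension $14$, then adds $v_{32},v_{34},\ldots,v_{41}$ and then $v_{28},v_{29}$, after which it stabilizes at dimension $25$ for generic admissible parameters (e.g.\ $\al_1\al_7\al_{13}\neq 0$); that yields a faithful quotient of dimension $33$, not $20$. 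The admissibility relations do not rescue this, since they do not force the offending coefficients $\al_{13}$, $\al_7\al_{13}$, $\al_{13}\al_2$, \dots\ to vanish.

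The missing idea is that ``general'' in the paper means \emph{no case distinction on the parameters}, not \emph{parameter-independent}: the submodule one divides out must itself contain parameter-dependent invariant combinations. For instance $\al_{13}v_{42}+v_{44}$ is an $\Lf$-invariant for every admissible choice of parameters, because $x_1\cdot v_{42}=v_{58}$ cancels $x_1\cdot v_{44}=-\al_{13}v_{58}$; likewise $\al_{13}v_{30}+v_{31}$, $v_{33}+\al_7\al_{13}v_{42}$, and $v_{21}+2\al_{13}v_{20}+\al_{11}v_{30}+\al_{12}v_{42}$ become invariants in the successive quotients. Adjoining these (equivalently, imposing the relations $v_{44}=-\al_{13}v_{42}$, $v_{31}=-\al_{13}v_{30}$, $v_{33}=-\al_7\al_{13}v_{42}$, \dots) is exactly how the paper passes from $58$ to $43$ to $32$ to $23$, and then, by choosing a uniform (non-maximal) subspace of invariants avoiding $v_{58}$ at the last stage, to $20$. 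Without allowing such parameter-dependent generators your argument cannot close, so as written the claimed dimension $38$ for $W$ is unsupported.
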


\begin{proof}
We apply the algorithm {\sf Quotient} from \cite{BEG} to the module $V_{58}$.
This works as follows. The space of invariants is given by 
\[
V_{58}^{\Lf}=\langle \al_{13}v_{42}+v_{44},v_{43},v_{45},\ldots ,v_{58} \rangle,
\]
with $\dim (V_{58}^{\Lf})=16$ for all parameters $\al_1,\ldots ,\al_{13}$.
We choose a complement $U$ of  $Z(\Lf)=\langle v_{58} \rangle$ in $V_{58}^{\Lf}$
by taking the above basis for $V_{58}^{\Lf}$ except for $v_{58}$. Then $U$ is
a submodule such that the quotient $V_{43}=V_{58}/U$ is a faithful module of dimension $43$.
For the quotient, we may write the following relations 
\begin{align*}
v_{43} & = 0, \\
v_{44} & = -\al_{13}v_{42},\\
v_{45} & = \cdots = v_{57}=0.
\end{align*}
In other words, we may view $v_1,\ldots ,v_{42},v_{58}$ as a basis of
$V_{43}$. Now we repeat this procedure. We have
\[
V_{43}^{\Lf}=\langle \al_{13}v_{30}+v_{31},v_{32},v_{33}+\al_7\al_{13}v_{42},v_{34},\ldots ,v_{41},v_{58} \rangle,
\]
with $\dim (V_{43}^{\Lf})=12$ for all parameters $\al_1,\ldots ,\al_{13}$. We choose $U$ from $
V_{43}^{\Lf}$ by omitting $v_{58}$, and obtain a faithful quotient $V_{32}=V_{43}/U$ of dimension $32$.
We can take the following quotient relations 
\begin{align*}
v_{31} & =  -\al_{13}v_{30}, \\
v_{32} & = 0, \\
v_{33} & = -\al_7\al_{13}v_{42},\\
v_{34} & = \cdots = v_{41}=0.
\end{align*}
In the next step we obtain $\dim (V_{32}^{\Lf})=10$ for all parameters $\al_1,\ldots ,\al_{13}$.
Choosing a complement $U$ as above we obtain a faithful module $V_{23}=V_{32}/U$ of dimension $23$,
where the relations are given by
\begin{align*}
v_{21} & =  -2\al_{13}v_{20}-\al_{11}v_{30}-\al_{12}v_{42}, \\
v_{22} & = -\al_{13}v_{20},\\
\vdots  & = \vdots \\
v_{29} & = 0.
\end{align*}
The dimension of the space of invariants $V_{23}^{\Lf}$ however does depend on the parameters.
It can be of dimension $5$,$6$ or $7$, depending on certain case distinctions.
Without case distinction we can still choose some subspace $U$ of invariants not containing $v_{58}$,
which need not be a maximal with this property. This way we arrive at a faithful quotient $V_{20}$ of
dimension $20$. If we continue with case distinctions we obtain many different faithful quotients $V$
of dimensions $10\le \dim (V)\le 18$. The quotient algorithm stops if the space of invariants
is $1$-dimensional, spanned by $v_{58}$. Then there is no faithful quotient of lower
dimension.
\end{proof}

\begin{rem}
Note that the choice of the complements $U$ in the quotient algorithm is not unique. For our choice 
we obtained faithful modules of dimensions $58$, $43$, $32$ and $23$. In general, the dimensions 
might depend on $U$. However, taking quotients by invariants is no restriction. In fact, the
following result is easy to show: let $\Ln$ be a nilpotent Lie algebra, and $V$ be a nilpotent 
$\Ln$-module. Then every faithful quotient of $V$ can be obtained by taking successive quotients 
by invariants.
\end{rem}

\begin{ex}\label{4.11}
Consider the Lie algebra  $\Lf=\Lf(\al_1,\ldots ,\al_{13})$
with 
\[
(\al_1,\ldots ,\al_{13})=(1,0,0,0,0,0,-1,1,0,0,3,-16,1).
\]
We have $\mu(\Lf)\ge 12$, and $\Lf$ admits no affine structure, see \cite{BU5}. 
The above algorithm yields a faithful quotient of $V_{58}$ of dimension $18$.
Hence we have $\mu(\Lf)\le 18$, and this is up to now the best known estimate.
\end{ex}

Note that the above Lie algebra has minimal $\be$-invariant, namely $\be(\Lf)=5$.
The Betti numbers are given by 
$(b_0,\ldots , b_{10})=(1,2,3,5,6,6,6,5,3,2,1)$. \\[0.2cm]
We come back to finding as good as possible estimates on $\mu(\Lf)$ for all filiform Lie algebras 
$\Lf=\Lf(\al_1,\ldots ,\al_{13})$ of dimension $10$. Therefore we need to consider different 
choices of admissible parameters, which give well-defined classes of filiform Lie algebras.
The cases are as follows: \\[0.2cm]
{\it Case $1$:} $2\al_1+\al_7=0$. \\[0.1cm]
{\it Case $2$:} $2\al_1+\al_7\neq 0$. \\[0.1cm]
\ph {\it Case $2a$:} $\al_{13}\neq 0$, $\al_7^2=\al_1^2\neq 0$. \\[0.1cm]
\ph \ph {\it Case $2a1$:} $\al_7=\al_1$.  \\[0.1cm]
\ph \ph {\it Case $2a2$:} $\al_7= -\al_1$. \\[0.1cm]
\ph \ph \ph {\it Case $2a2a$:} $3\al_2+\al_8=0$. \\[0.1cm]
\ph \ph \ph {\it Case $2a2b$:} $3\al_2+\al_8 \neq 0$. \\[0.1cm]
\ph {\it Case $2b$:} $\al_{13}=0$. \\[0.1cm]
\ph \ph {\it Case $2b1$:} $\al_7^2\neq \al_1^2$. \\[0.1cm]
\ph \ph {\it Case $2b2$:} $\al_7^2=\al_1^2$. \\[0.1cm]
\ph \ph \ph {\it Case $2b2a$:} $\al_7=\al_1$. \\[0.1cm]
\ph \ph \ph {\it Case $2b2b$:} $\al_7=-\al_1$. \\[0.1cm]
\ph \ph \ph \ph {\it Case $2b2b1$:} $3\al_2+\al_8=0$. \\[0.1cm]
\ph \ph \ph \ph {\it Case $2b2b2$:} $3\al_2+\al_8 \neq 0$. 

\begin{lem}
All above conditions are isomorphism invariants. In particular, algebras of different cases are
non-isomorphic.
\end{lem}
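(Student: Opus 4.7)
The plan is to parametrize the group $G$ of adapted-basis transformations of $\Lf$, compute its action on the parameter variety in $\C^{13}$ cut out by the three defining polynomial equations, and verify that each condition appearing in the case list cuts out a $G$-invariant (hence isomorphism-invariant) subset. An adapted basis $y_1,\ldots,y_{10}$ is specified by the choices $y_1\equiv a x_1 \pmod{\Lf^2}$ and $y_2\equiv b x_2\pmod{\Lf^2}$ with $a,b\in\C^\times$, together with corrections lying in $\Lf^2$; the remaining $y_i$ are then forced by $y_{i+1}=[y_1,y_i]$. This exhibits $G$ as a semidirect product $T\ltimes U$, where $T=(\C^\times)^2$ is the torus acting by $x_1\mapsto a x_1$ and $x_i\mapsto a^{i-2}b\, x_i$ for $i\ge 2$, and $U$ is the unipotent group of upper-triangular shears. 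Since any isomorphism $\Lf(\al)\to\Lf(\al')$ carries one adapted basis to another, the isomorphism classes of the family correspond exactly to the $G$-orbits on the admissible parameters.

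A direct expansion of the brackets $[y_i,y_j]$ under the torus action yields the weights: $\al_1,\al_7,\al_{11}$ all scale by $b/a^2$; the parameters $\al_2,\al_8,\al_{12}$ scale by $b/a^3$; and $\al_{13}$ scales by $b/a$. In particular, all the polynomials appearing in the case list — namely $2\al_1+\al_7$, $\al_7-\al_1$, $\al_7+\al_1$, $\al_7^2-\al_1^2$, $3\al_2+\al_8$, and $\al_{13}$ — are homogeneous under $T$, so their vanishing loci are $T$-stable.

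The decisive step is to verify that these same loci are stable under $U$. Parametrizing a shear as $y_1=x_1+\sum_{j\ge 2}c_j x_j$ and $y_2=x_2+\sum_{j\ge 3}d_j x_j$, one expands $[y_i,y_j]$ to obtain formulas $\al_i'=\al_i+(\text{polynomial corrections in }c_j,d_j,\al_k)$, and checks for each case-defining polynomial $P$ that $P(\al')=P(\al)+(\text{terms vanishing on the parameter variety})$. The main obstacle lies precisely in this $U$-bookkeeping: a generic shear perturbs every $\al_i$ through nested brackets, and the correction terms must be shown to vanish modulo the three defining equations $\al_{11}(2\al_1+\al_7)=3\al_7^2$, $\al_{13}(2\al_1-\al_7-\al_{11})=0$, and the third equation, together with their consequences (for instance $\al_{13}(\al_1^2-\al_7^2)=0$). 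Repeated application of the Jacobi identity and of these relations is needed to absorb the unwanted terms.

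Once $U$-semi-invariance is established for each polynomial in the case list, combining with the $T$-semi-invariance above shows that every case-defining condition carves out a $G$-invariant subvariety of the parameter space. Consequently, parameter vectors from distinct cases lie in distinct $G$-orbits, and the associated Lie algebras are pairwise non-isomorphic, as claimed.
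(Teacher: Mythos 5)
Your overall strategy --- identify isomorphism classes with orbits of the adapted-basis group $G=T\ltimes U$ and check that each case-defining locus is $G$-stable --- is a legitimate route in principle, and your torus weights are correct (one checks $\al_1,\al_7,\al_{11}\mapsto (b/a^2)\al_i$, $\al_2,\al_8,\al_{12}\mapsto(b/a^3)\al_i$, $\al_{13}\mapsto(b/a)\al_{13}$ directly from $y_i=a^{i-2}b\,x_i$). But the proof has a genuine gap exactly where you locate ``the decisive step'': the stability under the unipotent shears is never actually verified. You describe what would have to be done --- expand $[y_i,y_j]$ for a general shear, compute the corrections to each $\al_i$, and show each case-defining polynomial transforms into itself modulo the ideal of the parameter variety --- and then assert the conclusion ``once $U$-semi-invariance is established.'' Since the torus part is trivial and the whole content of the lemma sits in the $U$-computation, nothing has been proved. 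Moreover, the global form of your claim is likely too strong: the condition $3\al_2+\al_8=0$ is only asserted (and only needs) to be an invariant \emph{within} the stratum $\al_7=-\al_1\neq 0$; there is no reason to expect its vanishing locus to be $U$-stable on all of the parameter variety, and an attempt to verify that would presumably fail. The correct statement is that each condition is invariant relative to its parent case, which your orbit formulation does not capture.

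The paper avoids this bookkeeping entirely by giving intrinsic, basis-free characterizations of each condition: $\al_1=0$, $\al_7=0$, $\al_{11}=0$, $\al_{13}=0$ are detected by the $\be$-invariant of the quotients $\Lf/\Lf^5$, $\Lf^2/\Lf^7$, $\Lf^3/\Lf^9$, $\Lf^4/\Lf^{11}$; the condition $\al_7=\al_1$ by $\be(\Lf/\Lf^2\ltimes\Lf^4/\Lf^7)=4$ (equivalently $\be(\Lf^2/\Lf^7)\neq 5$ and $[\Lf^2,\Lf^5]=\Lf^9$); the case $2b1$ condition by the existence of a filiform central extension (Proposition 4.17); and, on the stratum $\al_7=-\al_1\neq 0$, the condition $3\al_2+\al_8=0$ by the existence of an invertible derivation of $\Lf/\Lf^8$. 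Each such characterization is manifestly preserved by isomorphisms, so no orbit computation is needed. If you want to salvage your approach, you must either carry out the shear computation case by case modulo the defining equations, or replace it with intrinsic characterizations of this kind.
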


\begin{proof}
Using the $\be$-invariant we have
\begin{align*}
\al_1=0 \; & \Leftrightarrow \; \be(\Lf/\Lf^5) = 4,\\
\al_7=0 \; & \Leftrightarrow \; \be(\Lf^2/\Lf^7)=5,\\
\al_{11}=0 \; & \Leftrightarrow \; \be(\Lf^3/\Lf^9)=6,\\
\al_{13}=0 \; &  \Leftrightarrow \; \be(\Lf^4/\Lf^{11})=6, \\
\al_7=\al_1 \; & \Leftrightarrow \; \be (\Lf/\Lf^2\ltimes \Lf^4/\Lf^7)=4.
\end{align*}
The Lie algebras of case $1$ satisfy $2\al_1+\al_7=0$, which is equivalent to
$\al_1=\al_7=0$. The above table shows that these conditions are isomorphism invariants.
Hence the Lie algebras of case $1$ and case $2$ are well-defined. The same applies
to case $2a$ and case $2b$, because $\al_{13}\neq 0$ and $\al_{13}= 0$ are isomorphism invariants.
Recall that $\al_{13}\neq 0$ implies $\al_7^2=\al_1^2$.
The claim is also clear for the cases $2a1$, $2a2$.
Note that $\al_7=\al_1\neq 0$ is also equivalent to the conditions $\be(\Lf^2/\Lf^7)\neq 5$
and $[\Lf^2,\Lf^5]=\Lf^9$. As we will see in proposition $\ref{4.17}$, the Lie algebras
of case $2b1$ are well-defined. Finally, for the cases with $\al_7=-\al_1\neq 0$ the condition
$3\al_2+\al_8=0$ is equivalent to the fact, that the Lie algebra $\Lf/\Lf^8$ admits an invertible
derivation. Hence this condition is also an isomorphism invariant.
\end{proof}

For each case we have a result on $\mu(\Lf)$. Let us start with the first case.

\begin{prop}
Let $\Lf=\Lf(\al_1,\ldots ,\al_{13})$ be a filiform nilpotent Lie algebra of dimension $10$ satisfying
$2\al_1+\al_7=0$. Then $\mu(\Lf)=10$.
\end{prop}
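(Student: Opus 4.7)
The plan is to establish $\mu(\Lf)=10$ by proving both bounds separately. The lower bound $\mu(\Lf)\ge 10$ is an immediate application of Proposition~\ref{2.5}: since $\Lf$ is filiform of dimension $10$, its nilpotency class equals $9$, so $\mu(\Lf)\ge 9+1=10$.

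For the upper bound, I would first exploit the case hypothesis together with the defining polynomial relations of the family. The first relation $\al_{11}(2\al_1+\al_7)-3\al_7^2=0$ together with $2\al_1+\al_7=0$ forces $3\al_7^2=0$, hence $\al_7=0$, and therefore also $\al_1=0$. Substituting these values into the remaining two defining equations reduces them to $\al_{11}\al_{13}=0$ and $\al_{13}(2\al_3+\al_9)=3\al_{11}(\al_2+\al_8)$. Under $\al_1=\al_7=0$ the bracket table of $\Lf$ simplifies substantially: every commutator $[x_i,x_j]$ with $i,j\ge 2$ now lies in $\Lf^5$ or deeper, and $\ad(x_2)_{\mid \Ln}$ shifts the lower-central-series degree by at least three instead of just one.

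The construction of a faithful $\Lf$-module of dimension exactly $10$ would then proceed by applying the quotient algorithm from the proof of Corollary~\ref{quoti} to the general faithful module $V_{58}$ of Proposition~\ref{4.10}, specialised at $\al_1=\al_7=0$. In the specialised action, the absence of the $\al_1$- and $\al_7$-contributions kills many cross-terms in the action of $x_1$ and $x_2$, greatly enlarging the space of $\Lf$-invariants at each step. Iterative quotients by complements of $Z(\Lf)=\langle v_{58}\rangle$ inside these successive invariant subspaces should then chain together to drop the dimension from $58$ through the intermediate stages all the way down to $10$. A more structural alternative is to apply Proposition~\ref{algo} directly to the decomposition $\Lf=\Ld\ltimes\Ln$ with $\Ln=\langle x_2,\ldots,x_{10}\rangle$, $\Ld=\langle \ad(x_1)_{\mid \Ln}\rangle$ and the lower central filtration, choosing a compatible ideal $J$ tailored to the enhanced vanishing of brackets.

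The main obstacle will be verifying that the reduction really terminates at dimension $10$ uniformly across all admissible choices of the still-free parameters $\al_2,\ldots,\al_6,\al_8,\ldots,\al_{13}$, subject only to $\al_{11}\al_{13}=0$ and $\al_{13}(2\al_3+\al_9)=3\al_{11}(\al_2+\al_8)$. Concretely, at each stage of the algorithm one must check that the $\Lf$-invariant subspace contains sufficiently many vectors independent of $v_{58}$, and that no subbranch of the remaining parameter space obstructs the final descent. Together with the lower bound this will yield $\mu(\Lf)=10$.
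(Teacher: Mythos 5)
Your lower bound is fine and coincides with the paper's: filiform of dimension $10$ has class $9$, so Proposition~\ref{2.5} (or its corollary) gives $\mu(\Lf)\ge 10$. The reduction $\al_1=\al_7=0$ from the first admissibility equation is also correct and is exactly how the paper begins.

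The genuine gap is the upper bound: you never actually produce a faithful $10$-dimensional module, you only express the hope that the quotient algorithm applied to $V_{58}$ (specialised at $\al_1=\al_7=0$) ``should'' descend all the way to dimension $10$, and you yourself flag this as ``the main obstacle.'' Nothing in the paper supports that hope: the paper's own analysis of the quotient algorithm shows it terminates as soon as the invariant space reduces to $\langle v_{58}\rangle$, and across the various cases it bottoms out anywhere between $10$ and $18$; there is no argument that in the case $\al_1=\al_7=0$ it reaches exactly $10$. Your structural alternative via Proposition~\ref{algo}/Proposition~\ref{4.5} also cannot deliver $10$ in general: $f(10,\be)=10$ only for $\be=9$, i.e.\ only when $\Lf$ has an abelian ideal of codimension one, which fails here whenever, say, $\al_{11}\neq 0$ (then $[x_4,x_5]\neq 0$), and already $f(10,8)=17$. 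The paper's actual proof takes a different and more elementary route: it constructs the module directly by setting $L(x_1)=\ad(x_1)$ and choosing $L(x_2)$ to be a suitable lower-triangular $10\times 10$ matrix, defining $L(x_i)=[L(x_1),L(x_{i-1})]$ and checking the bracket relations, with explicit case distinctions on $\al_{13}$, $\al_{11}$, $\al_2$ (details in \cite{BU5}); faithfulness amounts to $L(x_{10})\neq 0$. To repair your proof you would need either to carry out that explicit construction or to verify, case by case over the residual parameters, that your quotient chain really terminates at dimension $10$ --- neither is done.
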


\begin{proof}
The parameters are admissible iff $\al_1 =\al_7 = 0$ and $\al_{11}(\al_2+\al_8) = 0$.
To construct a module for $\Lf$ we need to find two operators
$L(x_1)$ and $L(x_2)$, which define $L(x_i):=[L(x_1),L(x_{i-1})]$ for $i\ge 3$, so that the
conditions $L([x_i,x_j)]=[L(x_i),L(x_j)]$ are satisfied for all  $i,j\ge 1$.
This module is faithful if and only if $L(x_{10})$ is nonzero.
It is easy to see that we can always find such operators, by taking $L(x_1)=\ad(x_1)$ and 
$L(x_2)$ some $10\times 10$ lower-triangular matrix. However, the construction depends on
different cases, such as $\al_{13}\neq 0$, or $\al_{13}=0$ with
$\al_{11}\neq 0, \al_2\neq 0$, with $\al_{11}\neq 0, \al_2= 0$, or with $\al_{11}=0$. 
For more details see \cite{BU5}.
\end{proof}

For case $2a$ we have the following results, see \cite{BU5}:

\begin{prop}
Let $\Lf=\Lf(\al_1,\ldots ,\al_{13})$ be a filiform nilpotent Lie algebra of dimension $10$ satisfying
$2\al_1+\al_7\neq 0$, $\al_{13}\neq 0$ and $\al_7=\al_1$. Then $\mu(\Lf)\le 11$.
\end{prop}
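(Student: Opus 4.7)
The plan is to leverage the algorithmic machinery developed in Section 3 and applied in Proposition \ref{4.10} and Corollary \ref{quoti}, rather than attempting an ad hoc matrix construction from scratch. First I would exploit the polynomial constraints: under the hypothesis $\al_7 = \al_1$ together with $2\al_1+\al_7 \neq 0$, the first defining equation $\al_{11}(2\al_1+\al_7) = 3\al_7^2$ forces $\al_{11} = \al_1$, and in particular $\al_1 \neq 0$. Substituting these identifications into the bracket table collapses several previously independent structure constants (for instance $[x_2,x_8] = (\al_1 - 4\al_7 + 3\al_{11})x_{10} = 0$), which should produce many additional invariants inside the general module $V_{58}$ from Proposition \ref{4.10}.

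My main step is then to run the quotient algorithm of Corollary \ref{quoti} in this specialised situation. Concretely, I would start from $V_{20}$ (the output of the general algorithm), and recompute $V_{20}^{\Lf}$ with the parameter relations $\al_1 = \al_7 = \al_{11}$, $\al_{13}\neq 0$ imposed. Each collapsed bracket contributes additional invariants, so the invariant subspace is strictly larger than in the generic case. I would then form a complement $U$ of the centre $Z(\Lf) = \langle v_{58}\rangle$ inside $V_{20}^{\Lf}$ (making sure $v_{58}\notin U$ so that faithfulness is preserved via Lemma \ref{3.2}), pass to the quotient $V_{20}/U$, and iterate. At each stage the remark at the end of Section 4.4 guarantees that quotienting by invariants never loses faithful quotients, so the iteration terminates with an optimal faithful module whose invariants are one-dimensional, spanned by $\ov{v_{58}}$.

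The combinatorial heart of the argument is book-keeping: I must verify that after the successive quotients, the remaining standard monomial classes number at most $11$. Here I expect to use the fact that in $V_{58}$ many of the high-order basis vectors (those corresponding to monomials in $x_5,\dots,x_8$) become invariants once the coefficients involving $\al_7-\al_1$, $\al_{11}-\al_1$, and combinations thereof vanish. The residual $11$ classes should essentially be $\ov{1},\ov{X_2},\ov{X_3},\ov{X_4},\ov{X_5},\ov{X_6},\ov{X_7},\ov{X_8},\ov{X_9},\ov{X_{10}}$ together with one extra quadratic class (such as $\ov{X_2^2}$) needed to realise a non-trivial action of $x_2$ producing the central element.

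The main obstacle will be the tracking of invariants through the iterations: the action of $x_1$ in Proposition \ref{4.10} involves many cross-terms with coefficients like $\al_1\al_7$, $\al_{11}\al_7$, $\al_{13}\al_2$, etc., and it requires care to identify which linear combinations of $v_i$'s become invariant when the specific relations hold. An alternative, more direct, route (as pursued in \cite{BU5}) is to write down explicit $11\times 11$ lower-triangular matrices $L(x_1), L(x_2)$ and define $L(x_i) = [L(x_1),L(x_{i-1})]$ for $i\ge 3$, verifying the bracket relations and the nonvanishing of $L(x_{10})$; the special form $\al_7=\al_1$, $\al_{13}\neq 0$ makes such a construction tractable, and this is the shortest path to the bound $\mu(\Lf)\le 11$ if the algorithmic route proves computationally cumbersome.
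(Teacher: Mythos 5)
The paper does not actually prove this proposition: it is quoted from \cite{BU5}, where the bound $\mu(\Lf)\le 11=\dim(\Lf)+1$ is obtained by exhibiting an affine structure on $\Lf$, i.e.\ by the explicit construction of two lower-triangular $11\times 11$ operators $L(x_1),L(x_2)$ with $L(x_i)=[L(x_1),L(x_{i-1})]$ for $i\ge 3$ and $L(x_{10})\neq 0$ --- precisely your fallback route. Your preliminary parameter analysis is correct: $\al_7=\al_1$ together with $2\al_1+\al_7=3\al_1\neq 0$ forces $\al_{11}=\al_1\neq 0$ via the first defining equation, and then $[x_2,x_8]=(\al_1-4\al_7+3\al_{11})x_{10}=0$.

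However, your primary route has a genuine gap. You assert that running the quotient algorithm on $V_{20}$ (or $V_{58}$) with the relations $\al_1=\al_7=\al_{11}$, $\al_{13}\neq 0$ imposed will terminate at dimension at most $11$, but nothing in the paper or in your argument guarantees this. The paper's own computations show that in the adjacent case $2a2b$ (where $\al_7=-\al_1$ and $3\al_2+\al_8\neq 0$) the same algorithm stalls at dimension $18$; whether it reaches $11$ in case $2a1$ is exactly the computation you have not done, and your expected list of surviving classes ($\ov{1},\ov{X_2},\dots,\ov{X_{10}}$ plus one quadratic monomial) is a guess, not a verification. Moreover, the remark following Corollary \ref{quoti} only says that every faithful quotient of a nilpotent module arises by successive quotients by invariants; it does not say that a particular greedy choice of complements $U$ reaches the minimal, or even an $11$-dimensional, faithful quotient, and the paper explicitly warns that the dimensions obtained may depend on the choice of $U$. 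So as written neither branch of your proposal constitutes a proof: the first is an unverified computation whose outcome is genuinely in doubt, and the second defers entirely to the matrix construction in \cite{BU5} without carrying it out.
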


\begin{prop}
Let $\Lf=\Lf(\al_1,\ldots ,\al_{13})$ be a filiform nilpotent Lie algebra of dimension $10$ satisfying
$2\al_1+\al_7\neq 0$, $\al_{13}\neq 0$ and $\al_7=-\al_1$. Then $\mu(\Lf)\le 11$ if and only if 
$3\al_2+\al_8=0$. Otherwise we have $\mu(\Lf)\le 18$. 
\end{prop}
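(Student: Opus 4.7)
The plan is to continue the quotient algorithm of corollary \ref{quoti} past the $V_{20}$ stage, now with a case distinction based on the vanishing of $3\al_2+\al_8$. I would first substitute the standing assumptions $2\al_1+\al_7\neq 0$, $\al_{13}\neq 0$, and $\al_7=-\al_1$ (so $\al_1\neq 0$) into the action formulas of proposition \ref{4.10}, which collapses many of the coefficients appearing in the tables for $x_1.v_i$ and $x_2.v_i$. The key observation to aim for is that an additional element of $V_{20}^{\Lf}$ appears precisely when $3\al_2+\al_8$ vanishes, because this quantity arises as the obstruction to invariance of a certain linear combination of the remaining basis vectors.

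For the upper bounds I would run the algorithm {\sf Quotient} of \cite{BEG} to completion in both subcases. In the generic subcase $3\al_2+\al_8\neq 0$, the space $V_{20}^{\Lf}$ has dimension three, so choosing a complement of $\langle v_{58}\rangle$ inside it produces a faithful quotient of dimension $18$; a direct check then shows that the invariant space of this quotient is one-dimensional, so the algorithm terminates and $\mu(\Lf)\le 18$. When $3\al_2+\al_8=0$, the extra invariant allows further quotient steps, and iterating the procedure yields a faithful $\Lf$-module of dimension $11$, giving $\mu(\Lf)\le 11$. In both subcases the required invariants and quotient relations can be extracted explicitly from the action tables of proposition \ref{4.10}.

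For the reverse implication $\mu(\Lf)\le 11 \Rightarrow 3\al_2+\al_8=0$, I would appeal to the lower bound $\mu(\Lf)\ge 12$ established in \cite{BU5} for the Lie algebras of case $2a2b$, exemplified by \ref{4.11}. The preceding lemma guarantees that $3\al_2+\al_8\neq 0$ is an isomorphism invariant in this setting, being equivalent to $\Lf/\Lf^8$ admitting no invertible derivation, so the bound from \cite{BU5} applies uniformly to all admissible parameter tuples in case $2a2b$. The main obstacle is the explicit bookkeeping: determining $V_{20}^{\Lf}$ and its successive quotients requires solving linear systems in the thirteen parameters, and pinpointing $3\al_2+\al_8$ as the unique algebraic relation responsible for the jump in the dimension of the invariant space is the most delicate step of the argument.
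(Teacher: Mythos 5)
Your treatment of the bound $\mu(\Lf)\le 18$ and of the forward implication matches the paper: the $18$-dimensional faithful module is obtained by running the algorithm {\sf Quotient} on $V_{58}$ exactly as in corollary \ref{quoti}, and the implication $\mu(\Lf)\le 11\Rightarrow 3\al_2+\al_8=0$ rests, as in the paper, on the lower bound $\mu(\Lf)\ge 12$ proved in \cite{BU5} for the whole class with $3\al_2+\al_8\neq 0$ (not merely for the single example \ref{4.11}; note that isomorphism invariance of the condition by itself does not transport a lower bound across the class --- the uniform statement has to come from \cite{BU5} itself, which it does).

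The genuine gap is in the direction $3\al_2+\al_8=0\Rightarrow\mu(\Lf)\le 11$. You assert that iterating the quotient algorithm in this subcase terminates at a faithful module of dimension $11$, but you give no evidence for this, and the paper does not obtain the bound this way: it cites \cite{BU5}, where the $11$-dimensional faithful module comes from a different construction (essentially the existence of an affine structure, i.e.\ a faithful module of dimension $\dim(\Lf)+1$). The paper's own discussion of the parallel case $\al_{13}=0$, $\al_7=-\al_1$ is a warning here: even when $\mu(\Lf)\le 11$ is known by other means, the quotient algorithm applied to $V_{58}$ is only recorded as reaching dimensions $12$, $13$ or $14$. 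Moreover the paper states that in case $2a$ the algorithm ``always'' produces a faithful quotient of dimension $18$, i.e.\ also when $3\al_2+\al_8=0$, so there is no indication that the extra invariant you posit lets the algorithm descend to $11$. To close the gap you should either carry out the computation explicitly and exhibit the $11$-dimensional quotient (showing the invariant space stays large enough at every step), or simply invoke the construction of \cite{BU5} for this direction as the paper does. Your numerical claim that $\dim V_{20}^{\Lf}=3$ in the generic subcase is likewise unverified, though it is only needed for the (correct) $\le 18$ conclusion and is consistent with the paper's output.
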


In this case the module $V_{58}$ from proposition $\ref{4.10}$ always has a faithful
quotient of dimension $18$. This can be seen by applying the quotient algorithm as
in corollary $\ref{quoti}$. For $3\al_2+\al_8\neq 0$ this is the best bound known so far.
The example given in $\ref{4.11}$ belongs to this class. \\
For case $2b$ we have the following results, see \cite{BU5} and \cite{BU13}:

\begin{prop}\label{4.17}
Let $\Lf=\Lf(\al_1,\ldots ,\al_{13})$ be a filiform nilpotent Lie algebra of dimension $10$ satisfying
$2\al_1+\al_7\neq 0$. Then  $\Lf$ admits a central extension 
$0 \ra Z(\Lh)\ra \Lh\ra \Lf\ra 0$ by some filiform nilpotent Lie algebra $\Lh$ 
if and only if $\al_{13}= 0$ and $\al_1^2\neq \al_7^2$, in which case we have  $\mu(\Lf)=10$.
\end{prop}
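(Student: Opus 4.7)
Central extensions $0\to\C z\to\Lh\to\Lf\to 0$ correspond to $2$-cocycles $\om\in Z^2(\Lf,\C)$: lifting the basis $x_1,\ldots,x_{10}$ of $\Lf$ to $y_1,\ldots,y_{10}$ of $\Lh$ and setting $y_{11}=z$, the brackets of $\Lh$ read $[y_i,y_j]_{\Lh} = \widetilde{[x_i,x_j]_\Lf} + \om(x_i,x_j)\,y_{11}$, with $y_{11}$ central. The extension is filiform of dimension $11$ precisely when $y_{11}\in\Lh^{10}$, which after rescaling $y_{11}$ is the normalization $\om(x_1,x_{10})=1$. To settle the existence half, I would write out the cocycle conditions $\sum_{\text{cyc}}\om([x_a,x_b],x_c)=0$ for triples $(x_1,x_i,x_j)$ with $i+j\in\{10,11\}$. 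The four $\al_{13}$-brackets $[x_2,x_9]$, $[x_3,x_8]$, $[x_4,x_7]$, $[x_5,x_6]$ inside $\Lf$ contribute $\al_{13}\,\om(x_1,x_{10})$-terms, and adding the relations in the correct alternating combination should force $\al_{13}\cdot(\text{scalar in }\al_1,\al_7)=0$, where the scalar is nonzero under the case hypothesis $2\al_1+\al_7\ne 0$. This gives the necessity of $\al_{13}=0$.

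With $\al_{13}=0$ the cocycle system becomes linear; the equation fixing $\om(x_1,x_{10})$ carries the leading coefficient $\al_1^2-\al_7^2$, arising from iterated application of $\ad(x_1)$ through the filiform filtration of $\Lf$, so that solvability with $\om(x_1,x_{10})=1$ is exactly $\al_1^2\ne\al_7^2$. Conversely, under the two conditions I would write down $\om$ explicitly, constructing $\Lh$. The main obstacle in this half is the systematic bookkeeping of the cocycle equations; I would organize them by order in the filiform filtration of $\Lf$ to keep the analysis tractable.

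For the equality $\mu(\Lf)=10$, the bound $\mu(\Lf)\ge 10$ is the corollary to proposition \ref{2.5}. For the upper bound I would use $\Lh$ to build a faithful $11$-dimensional representation $L\colon\Lh\to\Lg\Ll_{11}(\C)$ in which $L(z)$ has rank one. This can be done via the explicit operator method of \cite{BU5}: choose lower-triangular matrices $L(y_1),L(y_2)$ and set $L(y_i):=[L(y_1),L(y_{i-1})]$ for $i\ge 3$; the defining relations $L([y_i,y_j])=[L(y_i),L(y_j)]$ then become a polynomial system in the matrix entries whose solvability with $L(z)$ nonzero of rank one is ensured precisely by the conditions $\al_{13}=0$ and $\al_1^2\ne\al_7^2$ derived above.

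Given such an $L$, the $1$-dimensional image $\im L(z)\subset\C^{11}$ is $\Lh$-invariant because $z$ is central, so $\Lh$ acts on the quotient space $\C^{11}/\im L(z)\cong\C^{10}$. On this quotient $L(z)$ vanishes, so the action factors through $\Lh/\langle z\rangle=\Lf$, producing a $10$-dimensional $\Lf$-representation $\ov L$. Faithfulness of $\ov L$ reduces to checking that no $\xi\in\Lh\setminus\langle z\rangle$ satisfies $L(\xi)\cdot\C^{11}\subseteq\im L(z)$, equivalently that $L(\xi)$ has rank at least $2$ for every such $\xi$; this is automatic from the structure of the explicit operators used in the construction. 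The main obstacle in this half is producing $L(y_1)$ and $L(y_2)$ uniformly across admissible parameter choices, which is the technical content shared with \cite{BU5} and \cite{BU13}.
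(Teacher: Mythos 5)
The paper does not actually prove this proposition: it is one of the results quoted from \cite{BU5} and \cite{BU13} (see the sentence ``For case $2b$ we have the following results'' immediately preceding it), so there is no in-text argument to compare yours against, and the burden of proof falls entirely on your sketch. Your framework for the first half is the right one --- classify one-dimensional central extensions by $2$-cocycles and extract the two conditions from the cocycle identities --- but the two decisive facts (that under $2\al_1+\al_7\neq 0$ the obstruction produced by the four $\al_{13}$-brackets is exactly $\al_{13}=0$, and that the normalized system is then solvable exactly when $\al_1^2\neq\al_7^2$) are asserted (``should force'', ``carries the leading coefficient'') rather than derived. Those computations \emph{are} the first half of the proof. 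A smaller imprecision: filiformity of $\Lh$ is equivalent to $\om(\,\cdot\,,x_{10})\neq 0$ on $\Lf$, not directly to $\om(x_1,x_{10})=1$; an extra (easy) normalization step is needed there.

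The genuine logical gap is in the claim $\mu(\Lf)=10$. The lower bound is fine. For the upper bound you posit an $11$-dimensional faithful representation $L$ of $\Lh$ with $L(z)$ of rank one and claim (i) that its existence ``is ensured precisely by the conditions $\al_{13}=0$ and $\al_1^2\neq\al_7^2$ derived above'' and (ii) that faithfulness of the induced $\Lf$-action on $\C^{11}/\im L(z)$ is ``automatic''. Neither holds as stated. For (i), those conditions were derived as the existence criterion for the extension $\Lh$; you give no argument that they also govern the solvability of the matrix system for $L(y_1),L(y_2)$ with $L(z)$ of rank one --- that is a separate construction and is exactly the hard step. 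For (ii), faithfulness of the quotient is equivalent to $L(y_{10})\C^{11}\not\subseteq \im L(z)$, and this can fail even when $L$ is faithful and $L(z)$ has rank one: already in $\Lg\Ll_3$ one can have $L(y_{10})=E_{1,2}$, $L(y_1)=E_{2,3}$, so that $L(z)=[L(y_1),L(y_{10})]=-E_{1,3}$ is nonzero of rank one while $\im L(y_{10})=\im L(z)=\C e_1$, killing the center of $\Lf$ on the quotient. So the rank-one quotient trick does not reduce $11$ to $10$ for free; note that an $11$-dimensional faithful $\Lf$-module already comes at no cost from $\ad_{\Lh}$ (whose kernel is exactly $Z(\Lh)$), and the descent from $11$ to $10$ is precisely the content of \cite{BU5} that your argument still leaves unsupplied.
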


\begin{prop}
Let $\Lf=\Lf(\al_1,\ldots ,\al_{13})$ be a filiform nilpotent Lie algebra of dimension $10$ satisfying
$2\al_1+\al_7\neq 0$, $\al_{13}= 0$ and $\al_7=\al_1$. Then  $\mu(\Lf)\le 11$.
\end{prop}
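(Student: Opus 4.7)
The plan is to invoke Lemma~\ref{2.14}: if $\Lf$ admits a derivation $D$ that restricts to an isomorphism on the centre, then $\mu(\Lf)\le\dim(\Lf)+1=11$. Since $\Lf$ is filiform of dimension $10$, its centre is $Z(\Lf)=\C x_{10}$, and the task reduces to exhibiting an outer derivation $D\in\Der(\Lf)$ with $D(x_{10})=\lambda\, x_{10}$ and $\lambda\neq 0$; inner derivations annihilate the centre and are useless here.

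First I would unpack the admissibility constraints in this case. Substituting $\al_{13}=0$ and $\al_7=\al_1$ into the three defining polynomial equations, the first yields $3\al_1(\al_{11}-\al_1)=0$ and hence $\al_{11}=\al_1=:c\neq 0$; the second is automatic; and the third determines $\al_{12}=(4\al_8-3\al_2)/3$, with $\al_2,\al_3,\al_4,\al_5,\al_6,\al_8,\al_9,\al_{10}$ remaining free. Inspecting the bracket table under these relations one sees that several brackets collapse: $[x_2,x_8]=(\al_1-4\al_7+3\al_{11})x_{10}=0$, $[x_2,x_9]=0$, $[x_3,x_8]=0$, $[x_4,x_7]=0$, $[x_5,x_6]=0$, while the leading terms of $[x_2,x_5]$ and $[x_3,x_6]$ also vanish. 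This simplification is essential for the weight analysis below.

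I would then look for a derivation of upper-triangular shape $D(x_i)=d_i x_i + \sum_{j>i} e_{ij}\, x_j$ in the adapted basis. The relations $[x_1,x_i]=x_{i+1}$ force $d_{i+1}=d_1+d_i$, hence $d_i = d_2+(i-2)d_1$, and matching the leading coefficient $c$ of $[x_2,x_3]$ gives $d_5=d_2+d_3$; combining these one obtains $d_2=2d_1$ and $d_i = i\, d_1$ for all $i\ge 1$. Taking $d_1=1$ produces $d_{10}=10\neq 0$, the required eigenvalue on the centre. It then remains to choose the nilpotent correction coefficients $e_{ij}$ so as to absorb the subleading contributions of the brackets $[x_2,x_3],[x_2,x_4],[x_2,x_6],[x_2,x_7],[x_3,x_4],[x_3,x_5],[x_3,x_7],[x_4,x_5],[x_4,x_6]$ in the derivation equations; this amounts to a triangular linear system which is solved in order of increasing index.

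The main obstacle is verifying that this system is consistent for every choice of admissible secondary parameters. Here the case hypotheses $\al_7=\al_1$ and $\al_{13}=0$ enter decisively: they kill exactly those brackets ($[x_2,x_8],[x_2,x_9],[x_3,x_8],[x_4,x_7],[x_5,x_6]$) whose derivation equations would otherwise overdetermine $D$ near $x_{10}$. Once $D\in\Der(\Lf)$ with $D(x_{10})=10\, x_{10}$ is explicitly produced, Lemma~\ref{2.14} immediately gives $\mu(\Lf)\le 11$.
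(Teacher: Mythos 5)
Your reduction to Lemma~\ref{2.14} is sound in principle, and your bookkeeping is correct: in this case the admissibility equations do force $\al_{11}=\al_1\neq 0$ and $\al_{12}=(4\al_8-3\al_2)/3$, the brackets you list do vanish, and the leading coefficients do force any upper-triangular derivation to have diagonal $(d_1,2d_1,\ldots,10d_1)$. The gap is that the entire content of the proof --- the consistency of the linear system for the off-diagonal coefficients $e_{ij}$ --- is asserted rather than proved, and it is in fact false for general members of this family. A triangular $D$ with diagonal $(d_1,2d_1,\ldots,10d_1)$ and $d_1\neq 0$ has ten distinct eigenvalues, hence is diagonalizable; its eigenbasis $y_i=x_i+(\text{higher terms})$ satisfies $[y_i,y_j]\in\C y_{i+j}$, so $\Lf$ would have to be isomorphic to the graded filiform algebra with all non-leading structure constants zero, i.e.\ to the single algebra $\Lf(1,0,0,0,0,0,1,0,0,0,1,0,0)$ after rescaling. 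Worse, since $\al_1\neq 0$, \emph{any} semisimple derivation of $\Lf$ is triangular with respect to the flag of characteristic ideals and its weights satisfy $w_{i+1}=w_1+w_i$ and $w_5=w_2+w_3$, hence $w_i=iw_1$; so a derivation acting invertibly on the centre exists if and only if $\Lf$ is isomorphic to that one graded algebra. Case $2b2a$ contains nontrivial filtered deformations of it (for instance $\al_6\neq 0$ with the remaining free parameters zero gives an admissible algebra defined by a non-coboundary degree-$5$ cocycle), and for those \emph{no} derivation is nonzero on $Z(\Lf)$, so Lemma~\ref{2.14} simply cannot be applied. Your remark that the case hypotheses ``kill exactly those brackets whose equations would overdetermine $D$'' overlooks the surviving tails $\al_2x_6+\cdots+\al_6x_{10}$ of $[x_2,x_3]$, $\al_8x_8+\cdots$ of $[x_3,x_4]$, and $\al_{12}x_{10}$ of $[x_4,x_5]$, which are precisely where the obstruction sits.

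The paper does not prove this proposition by a derivation argument at all: the bound is imported from \cite{BU5} and \cite{BU13}, where an affine structure (equivalently a complete left-symmetric product) is constructed on these algebras by extension methods, and an affine structure always yields a faithful module of dimension $\dim(\Lf)+1=11$. The paper's own hint that in the parallel case $\al_7=-\al_1$, $3\al_2+\al_8=0$ only the \emph{quotient} $\Lf/\Lf^8$ admits an invertible derivation is a further warning sign that invertible derivations of $\Lf$ itself are generally unavailable here. To salvage your argument you would have to either prove that every algebra of case $2b2a$ is isomorphic to the graded one (which would make the whole case a single isomorphism class --- a strong claim needing proof) or replace Lemma~\ref{2.14} by a construction, such as the one in Section~3 of the paper or the affine-structure constructions of \cite{BU5}, that does not require a non-nilpotent derivation.
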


\begin{prop}
Let $\Lf=\Lf(\al_1,\ldots ,\al_{13})$ be a filiform nilpotent Lie algebra of dimension $10$ satisfying
$2\al_1+\al_7\neq 0$, $\al_{13}= 0$ and $\al_7=-\al_1$. Then $\mu(\Lf)\le 11$ if and only if 
$3\al_2+\al_8=0$. Otherwise we have $\mu(\Lf)\le 15$. 
\end{prop}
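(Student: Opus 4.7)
The plan is to break the proposition into three claims and address them separately. The two upper bounds $\mu(\Lf)\le 15$ (in general) and $\mu(\Lf)\le 11$ (when $3\al_2+\al_8=0$) will be obtained by specialising the general module $V_{58}$ of Proposition \ref{4.10} and iterating the quotient algorithm as in Corollary \ref{quoti}, while the converse implication $\mu(\Lf)\le 11 \Rightarrow 3\al_2+\al_8=0$ will be deduced from the lower-bound analysis of \cite{BU5, BU13}.

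First I would simplify the admissibility equations under the standing hypotheses $2\al_1+\al_7\neq 0$, $\al_7=-\al_1$, $\al_{13}=0$. These force $\al_1\neq 0$; the first defining equation becomes $\al_1\al_{11}=3\al_1^2$, so $\al_{11}=3\al_1$; the second equation is automatic; and the third yields $\al_{12}=-9\al_2-16\al_8$. Substituting these relations into the explicit action of $x_1$ and $x_2$ on the $58$ basis vectors of $V_{58}$ gives a concrete representation depending only on the free parameters $\al_1,\al_2,\al_3,\al_4,\al_5,\al_6,\al_8,\al_9,\al_{10}$ with $\al_1\neq 0$.

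Next I would run the quotient algorithm step by step: at each stage compute the $\Lf$-invariants of the current quotient, choose a submodule of invariants omitting the central class $v_{58}$, and pass to the quotient. I expect the case distinction to emerge at a late stage, with a pivot proportional to $\al_1(3\al_2+\al_8)$ controlling whether one further space of invariants is present. When this pivot is nonzero the algorithm terminates at dimension $15$, giving the generic upper bound. When it vanishes, four extra invariants become available, and one further quotient step drops the dimension to $11$. Both output modules remain faithful because by construction the submodules of invariants never contain $v_{58}$, so the central generator $x_{10}$ still acts non-trivially.

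The main obstacle is computational book-keeping rather than any conceptual difficulty: one has to carry the long action tables from Proposition \ref{4.10} through several quotient steps and then solve parametric linear systems to pinpoint the invariants. The delicate point is verifying that the relevant pivot really is a scalar multiple of $3\al_2+\al_8$, so that the two dimensions $11$ and $15$ actually correspond to the two cases of the statement. For the converse direction, establishing that no faithful $\Lf$-module of dimension at most $11$ exists when $3\al_2+\al_8\neq 0$ requires the weight-space decomposition of Proposition \ref{weight} together with an obstruction on the image of the $1$-dimensional centre $Z(\Lf)=\langle x_{10}\rangle$; I would invoke \cite{BU5, BU13} at this step rather than reproduce the argument.
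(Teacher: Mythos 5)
Your reduction of the admissibility equations is correct and useful: under $\al_7=-\al_1$, $2\al_1+\al_7\neq 0$, $\al_{13}=0$ one indeed gets $\al_1\neq 0$, $\al_{11}=3\al_1$ and $\al_{12}=-9\al_2-16\al_8$, and your treatment of the hard direction ($\mu(\Lf)\le 11\Rightarrow 3\al_2+\al_8=0$, i.e.\ the lower bound $\mu(\Lf)\ge 12$ when $3\al_2+\al_8\neq 0$) by appeal to \cite{BU5}, \cite{BU13} is exactly what the paper does -- the paper gives no self-contained argument for this proposition and cites those references for it, adding only that the bound in the subcase $3\al_2+\al_8\neq 0$ comes from a faithful quotient of the general module $V_{58}$ of Proposition \ref{4.10}. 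So for the generic upper bound your route (specialise $V_{58}$ and iterate the quotient algorithm of Corollary \ref{quoti}) coincides with the paper's.

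The genuine gap is in the subcase $3\al_2+\al_8=0$, where you claim the quotient algorithm itself drops to dimension $11$ via a pivot proportional to $\al_1(3\al_2+\al_8)$. This is announced with ``I expect'' and never verified, and there are two reasons to doubt it as stated. First, the quotient algorithm only produces faithful \emph{quotients of $V_{58}$}; the minimal faithful $\Lf$-module need not be among these, so even when $\mu(\Lf)\le 11$ there is no guarantee the algorithm terminates at $11$ -- compare Example \ref{4.11}, where $\mu(\Lf)\ge 12$ but the algorithm stops at $18$, showing the gap between the algorithm's output and $\mu(\Lf)$ can be large. Second, the paper's source for all the $\dim(\Lf)+1=11$ bounds in Section 4.4 is \cite{BU5} (the condition $3\al_2+\al_8=0$ is tied there to an invertible derivation of $\Lf/\Lf^8$ and to the existence of an affine structure, which yields $\mu(\Lf)\le\dim(\Lf)+1$ directly), not the enveloping-algebra construction. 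To close the gap you would either have to actually carry out the specialised quotient computation and exhibit an $11$-dimensional faithful quotient when $3\al_2+\al_8=0$ (a nontrivial claim that may simply be false for quotients of $V_{58}$), or replace that step by the affine-structure argument from \cite{BU5} as the paper implicitly does.
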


Here we use proposition $\ref{4.10}$ for the subcase $3\al_2+\al_8 \neq 0$. Then the module
$V_{58}$ has a faithful quotient of dimension $14$. In fact, for some cases, it even has
a faithful quotient of dimension $12$, $13$ or $14$.

%%%%%%%%%%%%%%%%%%%%%%%%%%%%%%%%%%%%%%%%%%%%%%%%%%%%%%%%%%%%%%%%%%%%%%%%%%%%%%

\end{document}